\title[Zig-zag chains and metric equivalences]{Zig-zag chains and metric equivalences between ultrametric spaces}
\author[Á.~Martínez-Pérez]{Álvaro Martínez-Pérez}
\address{Departamento de Geometría y Topología\\ Universidad Complutense de Madrid\\ Madrid 28040  Spain}
\email{alvaro\_martinez@mat.ucm.es}
\thanks{Partially supported by MTM 2006-00825.}
\newtheorem{definicion}{Definition}[section]
\newtheorem{nota}[definicion]{Remark}
\newtheorem{prop}[definicion]{Proposition}
\newtheorem{lema}[definicion]{Lemma}
\newtheorem{teorema}[definicion]{Theorem}
\newtheorem{cor}[definicion]{Corollary}
\theoremstyle{definition} %%bruce
\newtheorem{notation}[definicion]{Notation}%%bruce
\theoremstyle{remark}%%bruce
\newtheorem{remark}[definicion]{Remark}%%bruce
\newcommand{\br}{\ensuremath{\mathbb{R}}} %The real numbers
\newcommand{\co}{\ensuremath{\colon}} % colon for functions
\newcommand{\bz}{\ensuremath{\mathbb{Z}}} %The integers
\newcommand{\bn}{\ensuremath{\mathbb{N}}} %The natural numbers
\begin{document}

\begin{abstract} We study the classification of ultrametric spaces based on their small scale geometry (uniform homeomorphism), large scale geometry (coarse equivalence) and both (all scale uniform equivalences). We prove that these equivalences can be characterized with parallel constructions using a combinatoric tool called \emph{common zig-zag chain}.
\end{abstract}

\maketitle
\tableofcontents

\begin{footnotesize}
Keywords: Ultrametric, chain, end space, coarse, uniform homeomorphism, zig-zag chain.
\end{footnotesize}

\begin{footnotesize}
MSC: primary 18B30, 37F20; secondary 54E35.
\end{footnotesize}

\section{Introduction}

When one defines continuity for a function on a metric space, one neglects a great deal of the information contained in the metric focusing on the small scale structure. In fact, if $d$ is a metric then so it is $d'=\min\{d,1\}$ and this change won't affect continuity nor the topology of the space.

The dual situation appears with bornologous maps, where we pay attention only to the large scale geometry. If we consider the metric $d'=\max\{d,1\}$ all the topology of the space is lost, but we still keep all its large scale properties. For a further development of this, see \cite{Roe1}.

Thus, uniform category and coarse category are partial and somehow dual aspects of the whole picture. To depict both scales we use \emph{all scale uniform maps}, which are both, uniformly continuous and bornologous.

In this paper, we consider three categories of ultrametric spaces. 

\begin{itemize}
\item $\mathcal{C}_1$: Complete ultrametric spaces and all scale uniform maps.
\item $\mathcal{C}_2$: Ultrametric spaces and surjective bornologous multi-maps.
\item $\mathcal{C}_3$: Complete ultrametric spaces and uniformly continuous maps.
\end{itemize}

The idea is to characterize equivalences in these three categories using the same combinatorial technique and the same arguments, presenting categories $\mathcal{C}_2$ and $\mathcal{C}_3$ as partial representations of the geometry in $\mathcal{C}_1$.

Let us recall here the definition and basic properties of ultrametric spaces.

\begin{definicion} If $(X,d)$ is a metric space and \ $d(x,y)\leq \max \{d(x,z),d(z,y)\}$
for all $x,y,z\in X$, then $d$ is an \emph{ultrametric} and
$(X,d)$ is an \emph{ultrametric space}.
\end{definicion}

\begin{lema}\label{ultrametrico} \begin{itemize}
\item[(a)] Any point of a ball is a center of the ball.
\item[(b)]If two balls have a common point, one is contained in
the other. \item[(c)] The diameter of a ball is less than or equal
to its radius. \item[(d)] In an ultrametric space, all triangles
are isosceles with at most one short side. \item[(e)]
$S_r(a)=\underset{x\in S_r(a)}{\cup}B_{<r}(x)$. \item[(f)] The
spheres $S_r(a) \ (r>0)$ are both open and closed. \hfill$\blacksquare$ 
\end{itemize} 
\end{lema}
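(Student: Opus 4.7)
The plan is to derive every clause from a single source, the strong triangle inequality $d(x,y)\le\max\{d(x,z),d(z,y)\}$, together with the fact that if the two larger values in $\{d(x,z),d(z,y),d(x,y)\}$ are unequal then the inequality must in fact be an equality (this is the isosceles observation that will become (d)).

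For (a), given $y\in B_r(a)$, I would show $B_r(a)\subseteq B_r(y)$ and conversely: for any $z\in B_r(a)$, $d(z,y)\le\max\{d(z,a),d(a,y)\}<r$, hence $z\in B_r(y)$; the reverse inclusion is symmetric. Clause (b) is then immediate: if $z$ lies in two balls, re-centering both at $z$ via (a) reveals that the ball of smaller radius sits inside the one of larger radius. For (c), if $x,y\in B_r(a)$ then $d(x,y)\le\max\{d(x,a),d(a,y)\}\le r$, giving diameter at most $r$. For (d), order the three distances so that $d(x,y)$ is maximal; the inequality $d(x,y)\le\max\{d(x,z),d(z,y)\}$ forces $d(x,y)$ to coincide with whichever of $d(x,z),d(z,y)$ is larger, which is exactly the statement that the two largest sides agree.

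For (e), one inclusion is trivial since each $x\in S_r(a)$ lies in $B_{<r}(x)$. For the reverse, take $y\in B_{<r}(x)$ with $x\in S_r(a)$: then $d(a,x)=r$ and $d(x,y)<r$, so by the isosceles property (d) applied to the triple $\{a,x,y\}$ one must have $d(a,y)=r$, i.e.\ $y\in S_r(a)$. For (f), openness of $S_r(a)$ is now automatic from (e) as a union of open balls. Closedness follows by showing the complement $\{x:d(x,a)\ne r\}$ is open: it decomposes as $B_{<r}(a)\cup\{x:d(x,a)>r\}$; the first piece is open by definition, and for the second, if $d(x,a)>r$ and $d(x,y)<d(x,a)$ then by (d) again $d(y,a)=d(x,a)>r$, so each such $x$ has an open neighborhood in the set.

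The main obstacle, such as it is, is only bookkeeping: being careful about open versus closed balls in (a)--(c), and recognizing that (d) is the load-bearing statement that makes (e) and hence (f) work. Everything else is a direct unpacking of the ultrametric inequality, which is why the author is content to close the lemma with $\blacksquare$.
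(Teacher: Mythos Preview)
Your proof is correct in every clause; the arguments for (a)--(f) are the standard ones and each step follows directly from the ultrametric inequality as you claim. There is nothing to compare against: the paper states this lemma as a well-known collection of facts and closes it with $\blacksquare$ without giving any argument, exactly as you observed in your final sentence.
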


There is a well known correspondence between ultrametric spaces and trees. In an ultrametric space, for any pair of intersecting balls one will contain the other and hence, considering partitions of the space with shrinking diameter we obtain a branching process which can be modelized by a tree.

In the bounded case, B. Hughes stablishes some categorical equivalences in \cite{Hug}, which capture the geometry of trees at infinity and local geometry of ultrametric spaces.

From a more topological point of view, in \cite{M-M}, it is proved that there is categorical equivalence between complete ultrametric spaces of diameter $\leq 1$ with uniformly continuous maps and rooted geodesically complete \br--trees with classes of rooted, continuous and metrically proper maps. Hence, uniform homeomorphisms between bounded ultrametric spaces, can be characterized by some kind of metrically proper homotopy equivalence between the trees. The technique to do this uses a function called \emph{modulus of continuity} which is associated to any uniformly continuous map. This idea is used here in a generalized way defining what will be called \emph{expansion function}.

In \cite{BZ} Taras Banakh and Ihor Zarichnyy characterize coarse equivalences of homogeneous ultrametric spaces by some intrinsic invariant of the spaces called sharp entropy. They do this using induction on partially ordered sets called towers. In a slightly different approach, these objects are treated here as chains instead of as ordered sets.

Trees are also related to chains and inverse sequences. In \cite{M-M2} it is proved an equivalence of categories between inverse sequences and rooted geodesically complete \br--trees oriented to a geometric description of the shape category in Marde\v{s}i\'c-Segal approach (see \cite{MS1}). In this paper, we defined a functor from maps between trees to morphisms of inverse sequences related to the construction used here.

%It is not difficult to connect these ideas and see how to associate chains or inverse sequences to ultrametric spaces. %Given an ultrametric space $U$, we consider the extended chain whose level sets $X_k$ are the partition $U$ in balls of %radius $2^k$ with $k\in \bz$ and whose bonding maps are canonically induced by the inclusion. When $k$ tends to %$\infty$, the sequence captures the large scale geometry of $U$, when $k$ tends to $-\infty$, we capture its uniform %structure. 

A \emph{directed chain} $(X_k,\Phi_k)$ is a collection of sets $X_k \ k\in \bz$ and maps $\Phi_k:X_k \to X_{k+1} \ k\in \bz$. The \emph{direct limit}, $\underset{\to}{\lim} X_k$, is the disjoint union of the $X_k$'s modulo some equivalence relation $\sim$: for any pair of points $x_i\in X_i$, $x_j\in X_j$, $$x_i\sim x_j \mbox{ if there is some } k>i,j \mbox{ such that } f_{k-1}\circ \cdots \circ f_i (x_i)=f_{k-1}\circ \cdots \circ f_j(x_j).$$

\begin{definicion} A \emph{$D$--chain} $(X_k,\Phi_k)$ is a collection of sets $X_k \ k\in \bz$ and surjective maps $\Phi_k:X_k \to X_{k+1}, \ k\in \bz$, such that $\underset{\to}{\lim} X_k$ is trivial.
\end{definicion}

Hence, using all $k\in \bz$ we characterize the all scale uniform type of $U$, and if we want to focus only on the large scale or the small scale structure, we only need to restrict ourselves, roughly speaking, to one side of the chain. 

\begin{definicion} A \emph{$D_+$--chain} $(X_n,\Phi_n)$ is a sequence of sets $X_n$ and surjective maps $\Phi_n:X_n \to X_{n+1}$ such that $\underset{\to}{\lim} X_n$ is trivial.
\end{definicion}

\begin{definicion} A \emph{$D_-$--chain} $(X_n,p_n)$ is a sequence of sets $X_n$ and surjective maps $\Phi_n:X_{n+1} \to X_n$.
\end{definicion}

\begin{definicion} Given a $D$--chain $(X_k,\Phi_k)$ and an increasing function $\alpha\co \bz\to \bz$, the $D$--chain defined by the sets $X_{\alpha(k)}$ and the maps $\tilde{\phi}_k= \phi_{\alpha(k+1)-1}\circ \cdots \circ \Phi_{\alpha(k)}$ will be called an \emph{$\alpha$--sub-$D$-chain}.
\end{definicion}

\begin{definicion} Given a $D_+$--chain $(X_n,\Phi_n)$ and an increasing function $\alpha\co \bn\to \bn$, the $D_+$--chain defined by the sets $X_{\alpha(n)}$ and the maps $\tilde{\phi}_n= \phi_{\alpha(n+1)-1}\circ \cdots \circ \Phi_{\alpha(n)}$ will be called an \emph{$\alpha$--sub-$D_+$-chain}.
\end{definicion}

\begin{definicion} Given a $D_-$--chain $(X_n,\Phi_n)$ and an increasing function $\alpha\co \bn\to \bn$, the $D_-$--chain defined by the sets $X_{\alpha(n)}$ and the maps $\tilde{\phi}_n= \phi_{\alpha(n+1)-1}\circ \cdots \circ \Phi_{\alpha(n)}$ will be called an \emph{$\alpha$--sub-$D_-$-chain}.
\end{definicion}

\begin{remark} When there is no need to specify the map $\alpha$ and it is clear from the context whether we are considering $D$--chains, $D_+$--chains or $D_-$--chains, we will call these just \emph{subchains}.
\end{remark}

The following definitions are adapted from \cite{K}.

\begin{definicion} A $D$--chain $(Z_k,\mathcal{V}_k)$ is a \emph{common zig-zag $D$--chain} of the $D$--chains $(X_n,\Phi_n), (Y_n,\Psi_n)$ if there are increasing maps $\alpha,\beta \co \bz \to \bz$ and subchains $(X_{\alpha(k)},\tilde{\Phi}_k), (Y_{\beta(k)},\tilde{\Psi}_k)$ with
\[Z_i=\left\{
\begin{tabular}{l} $X_{\alpha(\frac{i+1}{2})} \quad
\qquad  \mbox{ if }
i$ is odd,\\

$Y_{\beta(\frac{i}{2})} \quad \qquad \quad \mbox{ if } i$ is even.\end{tabular}
 \right.\]
such that the following diagram commutes 
\begin{small}$$
\xymatrix{ \longrightarrow   & X_{\alpha(k-1)} \ar[dr]_{\mathcal{V}_{2k-3}} & \longrightarrow   &  X_{\alpha(k)}  \ar[dr]_{\mathcal{V}_{2k-1}}  &  \longrightarrow   & X_{\alpha(k+1)}\ar[dr]_{\mathcal{V}_{2k+1}}  & \longrightarrow \\
           \cdots & \longrightarrow     & Y_{\beta(k-1)} \ar[ur]_{\mathcal{V}_{2k-2}} & \longrightarrow &  Y_{\beta(k)} \ar[ur]_{\mathcal{V}_{2k}}& \longrightarrow  &  Y_{\beta(k+1)}}
$$\end{small}
\end{definicion}

\begin{definicion} A $D_+$--chain $(Z_n,\mathcal{V}_n)$ is a \emph{common zig-zag $D_+$--chain} of the $D_+$--chains $(X_n,\Phi_n), (Y_n,\Psi_n)$ if there are increasing maps $\alpha,\beta \co \bz_+ \to \bz_+$ and subchains $(X_{\alpha(n)},\tilde{\Phi}_n), (Y_{\beta(n)},\tilde{\Psi}_n)$ with
\[Z_i=\left\{
\begin{tabular}{l} $X_{\alpha(\frac{i+1}{2})} \quad
\qquad  \mbox{ if }
i$ is odd,\\

$Y_{\beta(\frac{i}{2})} \quad \qquad \quad \mbox{ if } i$ is even.\end{tabular}
 \right.\]
such that the following diagram commutes 
\begin{small}$$
%\begin{displaymath}
\xymatrix{X_{\alpha(1)} \ar[dr]_{\mathcal{V}_{1}} & \longrightarrow   &  X_{\alpha(2)}  \ar[dr]_{\mathcal{V}_{3}}  &  \longrightarrow   & X_{\alpha(3)}\ar[dr]_{\mathcal{V}_{5}}  & \longrightarrow &\cdots \\
                & Y_{\beta(1)} \ar[ur]_{\mathcal{V}_{2}} & \longrightarrow &  Y_{\beta(2)} \ar[ur]_{\mathcal{V}_{4}}& \longrightarrow  &  Y_{\beta(3)} & \longrightarrow}
%\end{displaymath}
$$\end{small}
\end{definicion}

\begin{definicion} A $D_-$--chain $(Z_n,\mathcal{V}_n)$ is a \emph{common zig-zag $D_-$--chain} of the $D_-$--chains $(X_n,\Phi_n), (Y_n,\Psi_n)$ if there are increasing maps $\alpha,\beta \co \bn \to \bn$ and subchains $(X_{\alpha(n)},\tilde{\Phi}_n), (Y_{\beta(n)},\tilde{\Psi}_n)$ with
\[Z_i=\left\{
\begin{tabular}{l} $X_{\alpha(\frac{i+1}{2})} \quad
\qquad  \mbox{ if }
i$ is odd,\\

$Y_{\beta(\frac{i}{2})} \quad \qquad \quad \mbox{ if } i$ is even.\end{tabular}
 \right.\]
such that the following diagram commutes 
\begin{small}$$
%\begin{displaymath}
\xymatrix{X_{\alpha(1)}  & \longleftarrow   &  X_{\alpha(2)}  \ar[dl]_{\mathcal{V}_{2}}  &  \longleftarrow   & X_{\alpha(3)}\ar[dl]_{\mathcal{V}_{4}}  & \longleftarrow &\cdots \\
                & Y_{\beta(1)} \ar[ul]_{\mathcal{V}_{1}} & \longleftarrow &  Y_{\beta(2)} \ar[ul]_{\mathcal{V}_{3}}& \longleftarrow  &  Y_{\beta(3)} \ar[ul]_{\mathcal{V}_{5}} & \longleftarrow}
%\end{displaymath}
$$\end{small}
\end{definicion}

The main result would be that two chains represent the same class of ultrametric space in the category $\mathcal{C}_1,\mathcal{C}_2,\mathcal{C}_3$ respectively if and only if there is a common zig-zag chain ($D$--chain, $D_+$--chain or $D_-$--chain) of them.

\begin{notation} We will denote by $(X_k,\Phi_k)\sim_{z-z} (Y_k,\Psi_k)$ if there is a common zig-zag chain of the chains $(X_k,\Phi_k), (Y_k,\Psi_k)$.
\end{notation}

This is related to some results in \cite{BZ}. In their work, they consider towers as ordered sets, which is just an alternative definition for what here is called $D_+$--chain. Also, we define here the end space of a chain and an ultrametric on it which is not exactly the same as they do.

Lemma 2 in \cite{BZ} states:

\begin{prop}\label{Admissible2} Let $\phi\co T_1 \to T_2$ be an admissible morphism between towers $T_1,T_2$. Then, the restriction $\Phi=\phi |_{[T_1]} \co [T_1]\to [T_2]$ is an asymorphism. \hfill$\blacksquare$
\end{prop}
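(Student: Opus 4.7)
The plan is to verify directly the two defining properties of an asymorphism, namely that $\Phi$ is macro-uniform (preserves large-scale closeness with controlled expansion) and that it admits a macro-uniform quasi-inverse $\Psi$ such that both compositions $\Phi\circ\Psi$ and $\Psi\circ\Phi$ are close to the respective identity maps. Since ``admissible morphism'' is not defined in the present excerpt, my working interpretation (following the tower setup of \cite{BZ}) is that $\phi\co T_1\to T_2$ consists, after a suitable reindexing $\alpha\co \bn\to\bn$, of a surjective level-by-level assignment between the partition sets of $T_1$ and $T_2$ that commutes with the bonding (refinement) maps.

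First I would unpack the data. A point of the end space $[T_1]$ is represented by a decreasing sequence of nonempty partition cells, one at each level of $T_1$, whose intersection is a singleton, and the canonical ultrametric on $[T_1]$ assigns to two such points the level radius $r_n$ at which their representing sequences first diverge. Commutation of $\phi$ with the bonding maps guarantees that applying $\phi$ levelwise to a compatible sequence in $T_1$ produces a compatible sequence in $T_2$, so $\Phi$ is well defined as a map $[T_1]\to [T_2]$.

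The second step is the bornologous estimate. If $x,y\in [T_1]$ share a cell at level $n$, then $\Phi(x),\Phi(y)$ share the image cell at level $\alpha(n)$, giving the forward control $d_{T_2}(\Phi(x),\Phi(y))\le r'_{\alpha(n)}$ whenever $d_{T_1}(x,y)\le r_n$. The third step is to build the quasi-inverse $\Psi$ using level-wise surjectivity of $\phi$: for each compatible sequence in $T_2$ one chooses a preimage cell at each level, coherently with the $T_1$ bonding maps, so that completeness of $T_1$ produces a point of $[T_1]$. The same level argument then yields a symmetric estimate for $\Psi$, and the commutation with bonding maps forces $\Psi\circ\Phi$ and $\Phi\circ\Psi$ to agree with the identity up to a single-level shift, hence at bounded distance uniformly.

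The main obstacle is the mismatch of the indexing between $T_1$ and $T_2$: admissibility must be exploited precisely to absorb the reindexing $\alpha$ into a bounded additive constant on levels (equivalently, a fixed multiplicative constant on the radii) so that the above estimates are uniform across all scales rather than merely asymptotic. A related subtlety is that $\Psi$ depends on non-canonical choices of preimage cells; verifying that any two such choices yield maps at bounded distance, with a bound independent of the representatives, is the technical heart of the argument.
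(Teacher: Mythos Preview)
The paper does not prove this proposition; it is quoted as Lemma~2 of \cite{BZ} and marked with $\blacksquare$ precisely to indicate that it is accepted here without proof. So there is no argument in the present paper to compare your proposal against.

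That said, your proposal rests on misreadings of the relevant definitions, both of which appear later in Section~6. First, $[T]$ is the \emph{base} $L_1=\min T$ of the tower, equipped with the path metric $d_T(x,y)=2\,lev_T(\sup(x,y))-2$; its points are minimal elements of $T$, not inverse sequences of partition cells, so your ``unpacking'' step does not apply. Second, an admissible morphism is required to satisfy $lev(\phi(a))=lev(a)$ exactly (condition~(1)): there is no reindexing $\alpha$, and hence the ``main obstacle'' you identify --- absorbing a level shift into a bounded constant --- simply does not arise. With the correct definitions the bornologous estimates become almost immediate: conditions~(1) and~(2) give $d_{T_2}(\Phi(x),\Phi(y))\le d_{T_1}(x,y)$, while condition~(3) together with level preservation yields $d_{T_1}(x,y)\le d_{T_2}(\Phi(x),\Phi(y))+2$; condition~(4) then handles the image. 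Your plan of building a quasi-inverse by non-canonical section choices is therefore unnecessary, and the ``technical heart'' you anticipate is absent once the definitions are read correctly.
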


Here we proof that there exists an admissible map if and only if there is a common zig-zag $D_+$--chain for the $D_+$--chains corresponding to the towers $T_1,T_2$ and that this implies a partial converse to \ref{Admissible2}.

\section{Expansion functions}

Let us recall first some definitions in coarse geometry. 

A map  between metric spaces $f\co X\to Y$ is \emph{metrically
proper} if for any bounded set $A\in Y$, \ $f^{-1}(A)$ is
bounded in $X$.

A map between metric spaces $f\co X \to Y$ is \emph{bornologous} if for every $R>0$ there is $S>0$
such that for any two points $x,x'\in X$ with $d(x,x')<R$,
$d(f(x),f(x'))<S$.

A map is \emph{coarse} if it is metrically proper
and bornologous.
 
Two maps between metric spaces $f,g\co X\to Y$ are \emph{close} if $\sup_{x\in X}\{d(f(x),g(x))<\infty$
 
A coarse map $f\co X \to Y$ is a \emph{coarse equivalence} if there is a coarse map $g\co Y \to X$ such that $g \circ f$ is close to $id_X$ and $f \circ g$ is close to $id_Y$. If there are such maps, then
$X,Y$ are \emph{coarse equivalent}.

But this in not the only way to define coarse equivalence between metric spaces. In this section, in order to describe in the same terms the categories $\mathcal{C}_1$, $\mathcal{C}_2$ and $\mathcal{C}_3$, we will use the following definition with multi-maps, as in \cite{BZ}.

By a multi-map $\Phi:X \Rightarrow Y$ between two sets $X,Y$ we understand any subset $\Phi \subset X\times Y$. For any subset $A\subset X$, by $\Phi(A)=\{y\in Y: \exists a\in A \mbox{ with } (a,y)\in \Phi \}$ we denote the image of $A$ under the multi-map $\Phi$. The inverse $\Phi^{-1}:Y\rightarrow X$ to the multi-map $\Phi$ is the subset $\Phi^{-1}=\{(y,x)\in Y\times X: (x,y)\in \Phi\}\subset Y\times X$. For two multi-maps $\Phi:X \Rightarrow Y$, $\Psi:Y \Rightarrow Z$ the composition $\Psi \circ \Phi$ is defined as usual: $$\Psi \circ \Phi =\{(x,z)\in X\times Z: \exists \, y\in Y \mbox{ such that } (x,y)\in \Phi \mbox{ and } (y,z)\in \Psi\}.$$

A multi-map is called \emph{surjective} if $\Phi(X)=Y$ and \emph{bijective} if $\Phi\subset X\times Y$ coincides with the graph of a bijective (single-valued) function.

\begin{definicion} Given a multi-map $\Phi\Rightarrow X \to Y$ between metric spaces, a non-decreasing function $\varrho_\Phi\co J\to [0,\infty)$ with $J=[0,S]$ or $J=[0,\infty)$ is called \emph{expansion function} if $\forall A \in X$ with $diam(A) \in J$, \
$diam(\Phi(A))\leq \varrho_\Phi(diam(A))$.
\end{definicion}

\begin{definicion} A multi-map $\Phi:X \Rightarrow Y$ between metric spaces is called 
\begin{itemize} \item \emph{bornologous} if there is an expansion function $\varrho_\Phi \co [0,\infty) \to [0,\infty)$.
\item an \emph{asymorphism} if both $\Phi,\Phi^{-1}$ are surjective bornologous multi-maps.
\end{itemize}
\end{definicion}

The following characterization is contained in Proposition 2 in \cite{BZ}.

\begin{prop}\label{coarse} For metric spaces $X,Y$ the following assertions are equivalent: 
\begin{itemize}\item $X$ and $Y$ are asymorphic. \item  $X$ and $Y$ are coarse equivalent. \hfill$\blacksquare$
\end{itemize} 
\end{prop}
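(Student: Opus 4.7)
The plan is to prove both directions directly by converting between a multi-map and a pair of single-valued maps, using the expansion functions of $\Phi$ and $\Phi^{-1}$ as the source of all quantitative bounds.

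For the direction ``asymorphic $\Rightarrow$ coarse equivalent,'' I would start by noting that if both $\Phi$ and $\Phi^{-1}$ are surjective, then for every $x\in X$ the set $\Phi(\{x\})$ is non-empty (apply surjectivity of $\Phi^{-1}$ at $x$) and symmetrically $\Phi^{-1}(\{y\})$ is non-empty for every $y\in Y$. Using the axiom of choice, I would select $f\co X\to Y$ with $(x,f(x))\in\Phi$ and $g\co Y\to X$ with $(g(y),y)\in\Phi$. Bornologousness of $f$ follows from $f(A)\subseteq\Phi(A)$ and the expansion function $\varrho_\Phi$, and similarly for $g$. Metric properness of $f$ follows because $f^{-1}(B)\subseteq\Phi^{-1}(B)$, whose diameter is controlled by $\varrho_{\Phi^{-1}}$. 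For closeness, observe that both $x$ and $g(f(x))$ belong to $\Phi^{-1}(\{f(x)\})$, so
\[
d(x,g(f(x)))\leq \varrho_{\Phi^{-1}}(0),
\]
uniformly in $x$, and symmetrically $d(y,f(g(y)))\leq \varrho_\Phi(0)$.

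For the direction ``coarse equivalent $\Rightarrow$ asymorphic,'' given a coarse equivalence $f\co X\to Y$ with coarse inverse $g\co Y\to X$ and closeness constant $C$ (so $d(f(g(y)),y)\leq C$ and $d(g(f(x)),x)\leq C$), I would define the multi-map
\[
\Phi=\{(x,f(x)):x\in X\}\cup\{(g(y),y):y\in Y\}\subseteq X\times Y.
\]
Surjectivity of $\Phi$ is immediate from the second piece, and surjectivity of $\Phi^{-1}$ from the first. To verify bornologousness, take $A\subseteq X$ with $\mathrm{diam}(A)\leq R$; then $\Phi(A)=f(A)\cup g^{-1}(A)$. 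The diameter of $f(A)$ is bounded by the expansion function $\varrho_f(R)$ of $f$. For $g^{-1}(A)$, if $y_1,y_2\in g^{-1}(A)$ then $g(y_1),g(y_2)\in A$ and
\[
d(y_1,y_2)\leq d(y_1,f(g(y_1)))+d(f(g(y_1)),f(g(y_2)))+d(f(g(y_2)),y_2)\leq 2C+\varrho_f(R).
\]
A similar triangle argument bounds the distance between a point of $f(A)$ and a point of $g^{-1}(A)$, yielding an expansion function for $\Phi$; the symmetric argument gives one for $\Phi^{-1}$.

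The only real subtlety is the second direction: one must be careful that, although $f$ itself need not be surjective, the multi-map built by gluing the graphs of $f$ and $g^{-1}$ is, and that the ``cross-terms'' in the diameter estimate for $\Phi(A)$ are controlled. This is where the closeness of $f\circ g$ to $\mathrm{id}_Y$ is essential; it absorbs precisely the obstruction coming from the failure of $f$ to be honestly surjective. Once that step is handled, all other verifications are straightforward applications of the definitions.
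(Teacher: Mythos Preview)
Your argument is correct, but note that the paper does not actually prove this proposition: it is quoted from \cite{BZ} (Proposition~2 there) and closed with a black square. So rather than comparing approaches, you are supplying a proof where the paper simply cites one.

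A few remarks on your write-up. In the direction ``asymorphic $\Rightarrow$ coarse equivalent,'' your metric-properness check is fine, since a set of bounded diameter is bounded; and your closeness argument is clean---both $x$ and $g(f(x))$ lie in $\Phi^{-1}(\{f(x)\})$, which indeed has diameter at most $\varrho_{\Phi^{-1}}(0)$. In the converse, your choice $\Phi=\mathrm{graph}(f)\cup\mathrm{graph}(g)^{-1}$ is the natural one, and the cross-term estimate you allude to is exactly $d(f(a_1),y_2)\leq d(f(a_1),f(g(y_2)))+d(f(g(y_2)),y_2)\leq \varrho_f(R)+C$ for $a_1\in A$ and $g(y_2)\in A$, so the total diameter of $\Phi(A)$ is at most $\varrho_f(R)+2C$. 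The symmetric computation gives an expansion function for $\Phi^{-1}$, and you are done. Note that you never use metric properness of $f$ or $g$ in the second direction; only bornologousness and the closeness constants enter, which is as it should be since the target notion (asymorphism) involves no properness hypothesis.
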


\begin{nota} Thus, equivalences in $\mathcal{C}_2$ are, in fact, coarse equivalences of ultrametric spaces.
\end{nota}

\begin{definicion} $\Phi$ is called \emph{all scale uniform} if there is an expansion function $\varrho_\Phi \co [0,\infty) \to [0,\infty)$ such that $\varrho_\Phi(t)=0$ and $\lim_{t\to 0} \varrho_\Phi(t)=0$. In this case, since $\varrho_\Phi(t)=0$, $\Phi$ is a single-valued map. If $\Phi^{-1}$ is also all scale uniform we say that $X,Y$ are \emph{all scale uniform equivalent}. 
\end{definicion}

\begin{definicion} A map $f\co X\to Y$ between metric spaces is \emph{uniformly continuous} if $\forall \, \epsilon>0$ there exists some $\delta>0$ such that for any pair of points $x,y$ with $d_X(x,y)<\delta$ then $d_Y(f(x),f(y))<\epsilon$.
\end{definicion}

\begin{prop} A map $\Phi\co X\to Y$ between metric spaces is \emph{uniformly continuous} if and only if there is an expansion function $\varrho_\Phi \co [0,S] \to [0,\frac{1}{2}]$ such that $\varrho_\Phi(0)=0$ and $\lim_{t\to 0} \varrho_\Phi(t)=0$.
\end{prop}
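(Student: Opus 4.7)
\medskip

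\textbf{Proof proposal.} The argument is a standard modulus--of--continuity construction, truncated to land in $[0,\tfrac12]$. I will prove the two implications separately.

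For the forward direction, assume $\Phi\co X\to Y$ is uniformly continuous. By uniform continuity applied to $\epsilon=\tfrac12$ there is a $\delta_0>0$ such that $d_X(x,y)<\delta_0$ implies $d_Y(\Phi(x),\Phi(y))<\tfrac12$. Choose any $S$ with $0<S<\delta_0$ and define the \emph{modulus of continuity}
\[
\omega_\Phi(t)\;=\;\sup\bigl\{\,d_Y(\Phi(x),\Phi(y))\,:\,x,y\in X,\ d_X(x,y)\le t\,\bigr\}\qquad(t\in[0,S]).
\]
The first step is to check $\omega_\Phi$ takes values in $[0,\tfrac12]$ on $[0,S]$: since $t\le S<\delta_0$, each pair in the supremum satisfies $d_Y(\Phi(x),\Phi(y))<\tfrac12$, so $\omega_\Phi(t)\le\tfrac12$. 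The function is non--decreasing by definition, and $\omega_\Phi(0)=0$ because $\Phi$ is single--valued. The limit condition $\lim_{t\to 0}\omega_\Phi(t)=0$ is just a re--statement of uniform continuity: given $\epsilon>0$, choose $\delta>0$ from the definition, and then for $t<\delta$ every admissible pair contributes at most $\epsilon$, so $\omega_\Phi(t)\le\epsilon$. Finally, the expansion inequality is built into the definition: for $A\subset X$ with $\mathrm{diam}(A)\le S$ and any $a,b\in A$ we have $d_X(a,b)\le\mathrm{diam}(A)$, hence $d_Y(\Phi(a),\Phi(b))\le\omega_\Phi(\mathrm{diam}(A))$; taking the supremum over $a,b\in A$ gives $\mathrm{diam}(\Phi(A))\le\omega_\Phi(\mathrm{diam}(A))$.

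For the converse, suppose we are given $\varrho_\Phi\co[0,S]\to[0,\tfrac12]$ with $\varrho_\Phi(0)=0$, $\lim_{t\to 0}\varrho_\Phi(t)=0$, and the expansion property. Fix $\epsilon>0$. If $\epsilon>\tfrac12$ any $\delta\le S$ works because $\varrho_\Phi$ is bounded by $\tfrac12$; so assume $\epsilon\le\tfrac12$. By the limit condition pick $\delta'>0$ with $\varrho_\Phi(t)<\epsilon$ for $0\le t<\delta'$, and set $\delta=\min(\delta',S)$. For $x,y\in X$ with $d_X(x,y)<\delta$, apply the expansion property to $A=\{x,y\}$: $\mathrm{diam}(A)=d_X(x,y)<\delta\le S$, so $d_Y(\Phi(x),\Phi(y))=\mathrm{diam}(\Phi(A))\le\varrho_\Phi(\mathrm{diam}(A))<\epsilon$. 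This is exactly uniform continuity.

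The whole argument is routine; the only mildly delicate point I expect is choosing $S$ in the forward direction so that the modulus of continuity actually lands in $[0,\tfrac12]$ (rather than being a larger or even infinite supremum), which is why the proof starts by invoking uniform continuity at the fixed target $\tfrac12$ before defining $\omega_\Phi$. Everything else is bookkeeping.
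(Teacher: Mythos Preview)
Your proof is correct and follows essentially the same route as the paper: the paper dismisses the converse direction as ``obvious'' and, for the forward direction, invokes uniform continuity at $\epsilon=\tfrac12$ to obtain $S$ and then sets $\varrho_\Phi(t):=\sup_{d(x,x')\le t}d(\Phi(x),\Phi(x'))$, which is exactly your modulus $\omega_\Phi$. Your write-up is in fact more careful than the paper's (you take $S<\delta_0$ strictly, guaranteeing the supremum really lands in $[0,\tfrac12]$, and you spell out the converse), but there is no difference in strategy.
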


\begin{proof} The if part is obvious.

If $\Phi$ is unifomly continuous here is some $S>0$ such that for any pair of points $x,y$ such that $d_X(x,x')<S$ then $d_Y(f(x),f(x'))<\frac{1}{2}$. Thus, it suffices to take $\varrho_\Phi(t):=\sup_{x,x'\in X,,\ d(x,x')\leq t}
\{d(\Phi(x),\Phi(x'))\}$.
\end{proof}

\section{All scale uniform equivalences}

Given an all scale uniform map $\Phi$ and an expansion function $\varrho_\Phi \co [0,\infty) \to [0,\infty)$ such that $\varrho_\Phi(t)=0$ and $\lim_{t\to 0} \varrho_\Phi(t)=0$, let us define $\gamma_{\varrho_\Phi}\co \bz \to \bz$ as follows, \begin{equation}\label{gamma} \gamma_{\varrho_\Phi}(k):=[log_2 (\varrho_\Phi(2^{k}))]+1\end{equation}
where $[t]$ stands for the maximal integer less or equal than $t$. Hence, for all points $x,x'\in X$, if $d_X(x,x')\leq 2^{k}$ then $d_Y(f(x),f(x'))\leq 2^{\gamma_{\varrho_\Phi}(k)}$ and $\gamma_{\varrho_\Phi}$ is non-decreasing. Then $\lim_{k\to -\infty}\gamma_{\varrho_\Phi}(k)=-\infty$ since $\Phi$ is uniformly continuous and we may assume, with no loss of generality, that $\lim_{t\to \infty}\varrho_\Phi(t)=\infty$ and therefore $\lim_{n\to \infty}\gamma_{\varrho_\Phi}(n)=\infty$.

\begin{nota} If $\Phi$ is an all scale uniform equivalence between unbounded metric spaces then necessarily $\lim_{t\to \infty}\varrho_\Phi(t)=\infty$ since $\Phi^{-1}$ is a bornologous surjective map.
\end{nota}

There is a correspondence between complete ultrametric spaces and $D$--chains. Let $U$ be an ultrametric space. For each $k\in \bz $ let $X_k$ be the partition of $U$ in balls of radius $2^{k}$. Let $\Phi_k\co X_k \to X_{k+1}$ the map canonically induced by the inclusion for any $k\in \bz$. $(X_k,\Phi_k)$ will be called the \emph{extended chain associated to $U$}. Conversely, given a $D$--chain we can obtain an ultrametric space as follows.

Let us define the end space as follows:

$$end(X_k,\Phi_k):=\{(x_k)_{k\in \bz} \ | \ x_k\in X_k \mbox{ and } \Phi(x_k)=x_{k+1}\},$$ and let us define the metric $$D((x_k),(y_k))=2^{k_0} \mbox{ where } k_0=\min\{k: x_k=y_k\}.$$ $D$ is well defined since $\underset{\to}{\lim}(X_k,\Phi_k)$ is trivial, and clearly, $D$ is an ultrametric.

\begin{prop}\label{end} If $(U,d)$ is a complete ultrametric space and $(X_k,\Phi_k)$ is the $D$--chain associated to $U$, then $(U,d)$ and $(end(X_k,\Phi_k),D)$ are bi-Lipschitz equivalent. In particular, they are all scale uniform equivalent.
\end{prop}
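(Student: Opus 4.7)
The plan is to exhibit the canonical evaluation map $\psi : U \to end(X_k,\Phi_k)$ sending a point to the coherent sequence of balls containing it, and to verify that it is a bi-Lipschitz bijection; the second statement of the proposition then follows automatically since any bi-Lipschitz equivalence has linear expansion functions and is in particular all scale uniform.

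Concretely, for each $u\in U$ and $k\in\bz$ let $B^k_u\in X_k$ denote the unique ball of radius $2^k$ containing $u$, which is well-defined by Lemma \ref{ultrametrico}(b). The inclusion $B^k_u\subseteq B^{k+1}_u$ makes $\psi(u):=(B^k_u)_{k\in\bz}$ a coherent sequence. The bi-Lipschitz estimate is then read off the quantity $k_0(u,v):=\min\{k:B^k_u=B^k_v\}$: Lemma \ref{ultrametrico}(c) applied to $B^{k_0}_u$ gives $d(u,v)\leq 2^{k_0}=D(\psi(u),\psi(v))$, while minimality of $k_0$ forces $v\notin B^{k_0-1}_u$ and hence $d(u,v)>2^{k_0-1}$. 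Combining these yields $d(u,v)\leq D(\psi(u),\psi(v))\leq 2\,d(u,v)$, which in one stroke provides injectivity and the Lipschitz bounds on $\psi$ and $\psi^{-1}$.

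Surjectivity is where completeness is used. Given a coherent sequence $(x_k)\in end(X_k,\Phi_k)$, the $x_k$ are nested balls of radius $2^k$, so by Lemma \ref{ultrametrico}(c) their diameters tend to $0$ as $k\to-\infty$; choosing any $u_n\in x_{-n}$ produces a Cauchy sequence in $U$, whose limit $u$ exists by completeness and belongs to every $x_k$ by the closedness of balls (Lemma \ref{ultrametrico}(f)), so $\psi(u)=(x_k)$. The only delicate point I anticipate is fixing a convention (open versus closed) for the balls that define the partitions $X_k$: this shifts the comparison $B^k_u=B^k_v\Leftrightarrow d(u,v)\leq 2^k$ by a strict inequality, but the resulting linear bounds, and hence the bi-Lipschitz conclusion, are insensitive to this choice.
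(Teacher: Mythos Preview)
Your proof is correct and follows essentially the same route as the paper: the paper also considers the canonical map $i\co U\to end(X_k,\Phi_k)$, notes that completeness makes it a bijection, and derives the identical estimate $d(x,y)\leq D(i(x),i(y))\leq 2d(x,y)$ from the equivalence ``$d(x,y)\leq 2^k$ if and only if $x,y$ lie in the same ball of $X_k$''. Your write-up simply fills in the details the paper leaves implicit (the explicit Cauchy-sequence argument for surjectivity and the discussion of the open/closed-ball convention); one small remark is that the closedness of the balls you invoke is not quite the content of Lemma~\ref{ultrametrico}(f), which concerns spheres, though the fact itself is of course true in any ultrametric space.
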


\begin{proof} First, note that if $(U,d)$ is complete there is a bijection $i\co U\to end(X_k,\Phi_k)$.

By the properties of the ultrametric, $d(x,y)\leq 2^k$ if and only if the points are in the same ball in the partition $X_k$. Hence, $d(x,y)\leq D(i(x),i(y))\leq 2d(x,y)$.
\end{proof}

\begin{definicion} A \emph{morphism of $D$--chains} $(f_k,\sigma) \co (X_k,\Phi_k)\to (Y_k,\Psi_k)$ consists of a non-decreasing function $\sigma\co \bz \to \bz$ such that $\lim_{k\to -\infty}=-\infty$ and $\lim_{k\to \infty}=\infty$, and maps $f_k\co X_k \to Y_{\sigma(k)}$ such that the following diagram commutes:
\begin{small}$$
\xymatrix{X_{k-1} \ar[dr]_{f_{k-1}} & \longrightarrow   &  X_{k}  \ar[dr]_{f_{k}}  &  \longrightarrow   & X_{k+1}\ar[dr]_{f_{k+1}}  & \longrightarrow \\
          \longrightarrow     & Y_{\sigma(k-1)} \longrightarrow & \cdots  & \longrightarrow Y_{\sigma(k)} \longrightarrow & \cdots  & \longrightarrow  Y_{\sigma(k+1)}}
$$\end{small}
\end{definicion}

%\begin{definicion} An \emph{immersion of $D$-chains} is a morphism of $D$--chains $(f_k,\sigma) \co (X_k,\Phi_k)\to %(Y_k,\Psi_k)$ such that $f_k\co X_k \to Y_{\sigma(k)}$ is injective.
%\end{definicion}

\begin{nota} Notice that although in this definition $\sigma(k)$ and $\sigma(k+1)$ may be the same for some $k$, using that $\lim_{k\to -\infty}=-\infty$ and $\lim_{k\to \infty}=\infty$ then for some function $\alpha$, we may assume that $\sigma$ is increasing when restricted to the  $\alpha$-subchain.
\end{nota}

\begin{lema} \label{telescope} If $(X_k,\Phi_k)$ is a $D$--chain, $\alpha \co \bz \to \bz$ is an increasing map  and $(X_{\alpha(k)},\tilde{\Phi}_k)$ is the $\alpha$-subchain, then $end(X_k,\Phi_k)$ is all scale uniform equivalent to $end(X_{\alpha(k)},\tilde{\Phi}_k)$.
\end{lema}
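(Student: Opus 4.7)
My plan is to exhibit an explicit all scale uniform equivalence
$\pi\co end(X_k,\Phi_k)\to end(X_{\alpha(k)},\tilde{\Phi}_k)$ given by restriction to the subsequence,
$\pi((x_k)_{k\in\bz}):=(x_{\alpha(k)})_{k\in\bz}$. Because $\alpha\co\bz\to\bz$ is strictly increasing, it is automatically unbounded in both directions, a fact used throughout.

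First I would verify that $\pi$ is a bijection. For injectivity, if $x_{\alpha(k)}=y_{\alpha(k)}$ for every $k$, then given any $m\in\bz$ choose $k$ with $\alpha(k)\leq m$ and observe $x_m=\Phi_{m-1}\circ\cdots\circ\Phi_{\alpha(k)}(x_{\alpha(k)})=y_m$. For surjectivity, given $(z_k)\in end(X_{\alpha(k)},\tilde{\Phi}_k)$, set $k(m):=\max\{k:\alpha(k)\leq m\}$ and $x_m:=\Phi_{m-1}\circ\cdots\circ\Phi_{\alpha(k(m))}(z_{k(m)})$. Using $\tilde{\Phi}_k=\Phi_{\alpha(k+1)-1}\circ\cdots\circ\Phi_{\alpha(k)}$ one checks that $\Phi_m(x_m)=x_{m+1}$ at every index, so $(x_m)$ lies in $end(X_k,\Phi_k)$ and visibly maps to $(z_k)$ under $\pi$.

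Next I estimate distortion. If two end-sequences $(x_k),(y_k)$ satisfy $D((x_k),(y_k))=2^{k_0}$---equivalently, they first agree at index $k_0$---then $\pi(x),\pi(y)$ first agree at index $k^\ast:=\min\{k:\alpha(k)\geq k_0\}$, which is finite since $\alpha$ is unbounded above. Hence $\tilde{D}(\pi x,\pi y)=2^{k^\ast}$. Setting $\varrho_\pi(t):=2^{\min\{k:\alpha(k)\geq\lceil\log_2 t\rceil\}}$ for $t>0$ and $\varrho_\pi(0):=0$ produces a non-decreasing expansion function, and unboundedness of $\alpha$ below gives $k^\ast\to -\infty$ as $k_0\to -\infty$, yielding $\lim_{t\to 0}\varrho_\pi(t)=0$. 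For the inverse map, the same calculation shows $D(\pi^{-1}z,\pi^{-1}w)\leq 2^{\alpha(k_0)}$ whenever $\tilde{D}(z,w)=2^{k_0}$, so $\varrho_{\pi^{-1}}(2^k):=2^{\alpha(k)}$ (extended monotonically) works. Both expansion functions satisfy the conditions in the definition of all scale uniform, establishing the desired equivalence.

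The one point that needs genuine care is the surjectivity step, specifically the identity $\Phi_m(x_m)=x_{m+1}$ at the ``jump'' indices $m$ where $k(m+1)>k(m)$: here necessarily $k(m+1)=k(m)+1$ with $\alpha(k(m)+1)=m+1$, and the identity then reduces to the defining formula $\tilde{\Phi}_{k(m)}=\Phi_{\alpha(k(m)+1)-1}\circ\cdots\circ\Phi_{\alpha(k(m))}$ applied to $z_{k(m)}$. Once this is unpacked, the rest is routine index tracking inside the ultrametric $D$.
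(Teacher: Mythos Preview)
Your proof is correct and follows essentially the same route as the paper: both use the canonical restriction map $\pi((x_k))=(x_{\alpha(k)})$ and build the expansion function from $\Lambda(k_0)=\min\{k:\alpha(k)\geq k_0\}$ (your $k^\ast$), then handle $\pi^{-1}$ symmetrically. You supply more detail than the paper---in particular the explicit bijectivity argument and the careful treatment of the ``jump'' indices in surjectivity---but the underlying idea and the expansion functions are the same.
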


\begin{proof} Consider the canonical map $i\co end(X_k,\Phi_k)\to end(X_{\alpha(k)},\tilde{\Phi}_k)$. 

Let us define the function $\Lambda\co \bz \to \bz$ such that $\Lambda(z)=\min\{k:\alpha(k)\geq z\}$. Since $\alpha$ is increasing, it follows that $\Lambda$ is non-decreasing, $\lim_{z\to -\infty}\Lambda(z)=-\infty$ and $\lim_{z\to \infty}\Lambda(z)=\infty$. Now, consider the function $\Gamma\co [0,\infty) \to [0,\infty)$ such that $\Gamma(0)=0$ and for any $x\in (2^{k-1},2^{k}]$, $\Gamma(x)=2^{\Lambda(k)}$. Clearly $\Gamma$ is non-decreasing, $\lim_{x\to 0}\Gamma(x)=0$ and $\lim_{x\to \infty}\Gamma(x)=\infty$. For any two end points $(x_k),(y_k)\in end(X_k,\Phi_k)$ with $D((x_k),(y_k))=2^{k_0}$ the distance between their correspondent subsequences $i(x_k)=(x_{\alpha(k)}),i(y_k)=(y_{\alpha(k)})$ is exactly $D'((x_{\alpha(k)}),(y_{\alpha(k)}))=\Gamma(2^{k_0})=2^{\Lambda(k_0)}$. Then $\Gamma$ is an extension function for $i\co end(X_k,\Phi_k) \to end((X_{\alpha(k)},\tilde{\Phi}_k))$ and $i$ is an all scale uniform map. A similar argument works for $i^{-1}$.
\end{proof}

\begin{prop} Consider two complete ultrametric spaces $U_1,U_2$ and let $(X_k,\Phi_k)$, $(Y_k,\Psi_k)$ be their associated $D$--chains. Then, there is an all scale uniform map $f\co U_1\to U_2$ if and only if there is a morphism of $D$--chains $(f_k,\sigma) \co (X_k,\Phi_k)\to (Y_k,\Psi_k)$.
\end{prop}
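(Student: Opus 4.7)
My plan is to pass through the end-space description of Proposition~\ref{end}, so that on both sides I work with sequences $(x_k)$ satisfying $\Phi_k(x_k)=x_{k+1}$, and reduce the claim to shuttling between such sequences.

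For the only-if direction, I would start from an all scale uniform $f$ and its expansion function $\varrho_f$, which by the remark following equation~(\ref{gamma}) I may assume satisfies $\lim_{t\to 0}\varrho_f(t)=0$ and $\lim_{t\to\infty}\varrho_f(t)=\infty$. Set $\sigma:=\gamma_{\varrho_f}$ from equation~(\ref{gamma}) and, for each ball $B\in X_k$, define $f_k(B)$ to be the unique cell of the partition $Y_{\sigma(k)}$ that contains $f(B)$. Uniqueness is ensured by combining Lemma~\ref{ultrametrico}(c) applied to $B$, the bound $\mathrm{diam}(f(B))\leq\varrho_f(2^k)\leq 2^{\sigma(k)}$ coming from the expansion function, and the fact (via Lemma~\ref{ultrametrico}(b)) that in an ultrametric space any set of diameter $\leq 2^{\sigma(k)}$ lies inside a single cell of $Y_{\sigma(k)}$. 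Commutativity of the resulting diagram reduces to the set-theoretic inclusion $f(B)\subset f(\Phi_k(B))$, again combined with Lemma~\ref{ultrametrico}(b), which forces $f_k(B)\subset f_{k+1}(\Phi_k(B))$. The required limit behaviour of $\sigma$ is inherited from $\varrho_f$ as discussed after~(\ref{gamma}).

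For the converse, I would take a morphism $(f_k,\sigma)$ and define a map on ends by $x\leftrightarrow(x_k)\mapsto(f_k(x_k))$. Commutativity of the morphism diagram together with $\Phi_k(x_k)=x_{k+1}$ shows that $(f_k(x_k))_{k\in\bz}$ is a consistent sequence in the $\sigma$-subchain $(Y_{\sigma(k)},\tilde{\Psi}_k)$, i.e.\ an element of its end space; applying Lemma~\ref{telescope} and Proposition~\ref{end} to identify this end space with $U_2$ produces the desired $f\co U_1\to U_2$. To see that $f$ is all scale uniform, I observe that $d(x,x')\leq 2^k$ forces $x_l=x'_l$, hence $f_l(x_l)=f_l(x'_l)$, for all $l\geq k$, so by the definition of the end metric $d(f(x),f(x'))\leq 2^{\sigma(k)}$. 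Translating $\sigma$ into a non-decreasing function on $[0,\infty)$ (with value $2^{\sigma(k)}$ on $(2^{k-1},2^k]$, as in the proof of Lemma~\ref{telescope}) yields an expansion function with $\varrho_f(t)\to 0$ as $t\to 0$, using $\lim_{k\to-\infty}\sigma(k)=-\infty$.

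The main technical care will be in the first direction: one has to keep track of the rounding built into $\gamma_{\varrho_f}$ (the $[\,\cdot\,]+1$) to secure a clean inequality $\mathrm{diam}(f(B))\leq 2^{\sigma(k)}$, and one has to accommodate the possibility that $\sigma(k)=\sigma(k+1)$, in which case the composition $\tilde{\Psi}_k$ along the bottom of the diagram is the identity and commutativity simply reads $f_{k+1}\circ\Phi_k=f_k$. The converse direction is essentially bookkeeping once Proposition~\ref{end} and Lemma~\ref{telescope} are in hand.
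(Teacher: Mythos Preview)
Your proposal is correct and follows essentially the same approach as the paper: in the forward direction both set $\sigma=\gamma_{\varrho_f}$ and define $f_k$ by sending a ball to the unique cell of $Y_{\sigma(k)}$ containing its image, and in the converse both build the end map from $(f_k(x_k))$ with expansion function $\varrho(t)=2^{\sigma(k)}$ on $(2^{k-1},2^k]$, then invoke Proposition~\ref{end} and Lemma~\ref{telescope}. The only cosmetic difference is that the paper lands directly in $end(Y_k,\Psi_k)$ (picking the unique $(y_k)$ with $y_{\sigma(k)}=f_k(x_k)$) rather than in the end space of the $\sigma$-subchain; since Lemma~\ref{telescope} is stated for \emph{increasing} $\alpha$, your route formally needs the Remark after the definition of morphism (restrict to an $\alpha$-subchain on which $\sigma$ is increasing), which you already flag when discussing the case $\sigma(k)=\sigma(k+1)$.
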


\begin{proof} If there is an all scale uniform map $\Phi:U_1\to U_2$, consider the map $\gamma_{\varrho_\Phi}\co \bz \to \bz$ from (\ref{gamma}). Let us consider $\sigma=\gamma_{\varrho_\Phi}$. Then, $\Phi$ induces maps $f_k\co X_k\to Y_{\sigma(k)}$ canonically as follows: by construction, any point $x_k\in X_k$ represents a ball $B(x_k)$ of radius $2^k$ of $U_1$, and by the properties of the ultrametric, this ball has diameter less or equal than $2^k$. By the definition of $\gamma_{\varrho_\Phi}$, if $diam(B(x_k))\leq 2^k$ then $diam(\Phi(B(x_k)))\leq 2^{\gamma_{\varrho_\Phi}(k)}=2^{\sigma(k)}$ and, since $Y_{\sigma(k)}$ is the partition of $U_2$ in balls of radius $2^{\sigma(k)}$, there is a unique point $y_{\sigma(k)}\in Y_{\sigma(k)}$ such that $\Phi(B(x_k))\subset B(y_{\sigma(k)})$. Then, the map $f_k$ such that $f_k(x_k):=y_{\sigma(k)}$ is well defined and it is surjective because $\Phi$ is surjective. It is immediate to check that the diagram commutes.

The morphism $(f_k,\sigma)$ induces a map $\Phi \co end(X_k,\Phi_k)\to end(Y_k,\Psi_k)$ where $\Phi((x_k))$ is the unique sequence $(y_k)\in end(Y_k,\Psi_k)$ such that $y_{\sigma(k)}=f_k(x_k)$. Now, for any $t\in (2^{k-1},2^k]$, $k\in\bz$, let $\varrho_\Phi(t)=2^{\sigma(k)}$ and $\varrho_\Phi(0)=0$. It is readily seen that $\varrho_\Phi$ is an extension function and $\Phi$ is an all scale uniform map. From Lemma \ref{telescope} together with Proposition \ref{end}, it follows that there is an all scale unifom map $f:U_1\to U_2$.
\end{proof}

\begin{lema}\label{alpha} If $\Phi \co X\to Y$ is an all scale uniform equivalence then there are increasing maps $\alpha(\Phi),\beta(\Phi^{-1}) \co \bz \to \bz$ such that $\gamma_{\varrho_\Phi}(\alpha(i))\leq \beta(i)$, $\gamma_{\varrho_{\Phi^{-1}}}(\beta(i))\leq \alpha(i+1)$ for every $i\in \bz$.
\end{lema}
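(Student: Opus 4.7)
The plan is a bidirectional back-and-forth construction on $\bz$, relying on the facts already established just above the lemma: $\gamma_{\varrho_\Phi}$ and $\gamma_{\varrho_{\Phi^{-1}}}$ are non-decreasing integer-valued functions, both tend to $-\infty$ at $-\infty$ (by uniform continuity of $\Phi$ and of $\Phi^{-1}$) and to $+\infty$ at $+\infty$ (by the preceding Remark, since $\Phi$ is an all scale uniform equivalence so both $\varrho_\Phi$ and $\varrho_{\Phi^{-1}}$ may be assumed unbounded). These two monotonicity-and-limit properties are precisely what makes an infinite zig-zag interleaving possible.

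First I would fix any base value, say $\alpha(0):=0$, set $\beta(0):=\gamma_{\varrho_\Phi}(\alpha(0))$ so that the first inequality holds trivially at $i=0$, and then propagate forward by the recursion
$$\alpha(i+1):=\max\{\alpha(i)+1,\; \gamma_{\varrho_{\Phi^{-1}}}(\beta(i))\}, \qquad \beta(i+1):=\max\{\beta(i)+1,\; \gamma_{\varrho_\Phi}(\alpha(i+1))\}.$$
By construction both sequences are strictly increasing for $i\geq 0$, and both inequalities $\gamma_{\varrho_\Phi}(\alpha(i))\leq \beta(i)$ and $\gamma_{\varrho_{\Phi^{-1}}}(\beta(i))\leq \alpha(i+1)$ hold because they are built into the maxima. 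The forward half is essentially automatic.

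The backward extension is the substantive step. Given $\alpha(i+1),\beta(i+1)$ already constructed, I would use $\lim_{n\to -\infty}\gamma_{\varrho_{\Phi^{-1}}}(n)=-\infty$ to choose an integer $N$ with $\gamma_{\varrho_{\Phi^{-1}}}(N)\leq \alpha(i+1)$, and then set $\beta(i):=\min\{\beta(i+1)-1,\, N\}$; monotonicity of $\gamma_{\varrho_{\Phi^{-1}}}$ then forces $\gamma_{\varrho_{\Phi^{-1}}}(\beta(i))\leq \alpha(i+1)$. Symmetrically, using $\lim_{k\to -\infty}\gamma_{\varrho_\Phi}(k)=-\infty$, pick $M$ with $\gamma_{\varrho_\Phi}(M)\leq \beta(i)$ and define $\alpha(i):=\min\{\alpha(i+1)-1,\, M\}$. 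Iterating downward produces strictly increasing maps $\alpha,\beta\co \bz\to \bz$ satisfying both required inequalities for every $i\in \bz$.

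The main and really only obstacle is this backward step: without the limits of $\gamma_{\varrho_\Phi}$ and $\gamma_{\varrho_{\Phi^{-1}}}$ being $-\infty$ at $-\infty$, the interleaved inequalities could not be propagated to arbitrarily small indices. Once those limits are in hand, the whole argument reduces to bookkeeping with integer maxima and minima, so no further machinery is needed.
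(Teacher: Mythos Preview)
Your proposal is correct and follows essentially the same approach as the paper: initialize at $i=0$ with $\alpha(0)=0$, $\beta(0)=\gamma_{\varrho_\Phi}(0)$, propagate forward by the same max-recursion, and extend backward using the limits at $-\infty$ via the analogous min-recursion. The only cosmetic difference is that the paper names the thresholds $k_{\alpha(i+1)}$ and $k_{\beta(i)}$ where you write $N$ and $M$, but the argument is identical.
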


\begin{proof} First, let $\alpha(0)=0$ and $\beta(0)=\gamma_\Phi(0)$. 

If we have defined $\alpha(i-1),\beta(i-1)$ for any $i>0$ then, it suffices make $\alpha(i)= \max\{\alpha(i-1)+1,\gamma_{\Phi^{-1}}(\beta(i-1))\}$ and $\beta(i)= \max\{\beta(i-1)+1,\gamma_{\Phi}(\alpha(i))\}$.

Since $lim_{k\to -\infty}\gamma_{\Phi^{-1}}(k)=-\infty$ for any $\alpha(i+1)$ there exist some $k_{\alpha(i+1)}$ such that for any $k\leq k_{\alpha(i+1)}$, $\gamma_{\Phi^{-1}}(k)\leq \alpha(i+1)$.

Since $lim_{k\to -\infty}\gamma_\Phi(k)=-\infty$ for any $\beta(i)$ there exist some $k_{\beta(i)}$  such that for any $k\leq k_{\beta(i)}$, $\gamma_{\Phi}(k)\leq \beta(i)$.

If we have defined $\alpha(i+1),\beta(i+1)$ for any $i<0$ then, it suffices make $\beta(i)= \min\{\beta(i+1)-1,k_{\alpha(i+1)}\}$ and $\alpha(i)=\min\{\alpha(i+1)-1,k_{\beta(i)}\}$.
\end{proof}

\begin{prop}\label{carac1} If  $(X_k,\Phi_k),(Y_k,\Psi_k)$ are two $D$--chains, then $(X_k,\Phi_k)\sim_{z-z}(Y_k,\Psi_k)$ if and only if $end(X_k,\Phi_k)$ and $end(Y_k,\Psi_k)$ are all scale uniform
equivalent.
\end{prop}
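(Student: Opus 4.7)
The plan is to prove both directions separately, with the previous proposition (characterizing all scale uniform maps as morphisms of $D$--chains), Lemma \ref{alpha} and Lemma \ref{telescope} doing most of the heavy lifting.

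For the ``$\Rightarrow$'' direction, assume a common zig-zag $D$--chain $(Z_k,\mathcal{V}_k)$ with associated subchains $(X_{\alpha(k)},\tilde{\Phi}_k)$ and $(Y_{\beta(k)},\tilde{\Psi}_k)$. First I would observe that the odd-indexed maps $\mathcal{V}_{2k-1}\co X_{\alpha(k)}\to Y_{\beta(k)}$ together with the identity $\sigma=\mathrm{id}$ (after a small index shift) form a morphism of $D$--chains from $(X_{\alpha(k)},\tilde{\Phi}_k)$ to $(Y_{\beta(k)},\tilde{\Psi}_k)$, since commutativity of the zig-zag diagram gives precisely $\mathcal{V}_{2k+1}\circ\mathcal{V}_{2k}=\tilde{\Psi}_k$ and $\mathcal{V}_{2k}\circ\mathcal{V}_{2k-1}=\tilde{\Phi}_k$. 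Applying the previous proposition produces an all scale uniform map $F\co end(X_{\alpha(k)},\tilde{\Phi}_k)\to end(Y_{\beta(k)},\tilde{\Psi}_k)$; the even-indexed maps analogously yield $G$ in the opposite direction. The zig-zag commutativities imply $G\circ F$ and $F\circ G$ are the identities on the respective end spaces (sending $(x_{\alpha(k)})\mapsto (\tilde{\Phi}_k(x_{\alpha(k)}))=(x_{\alpha(k+1)})$, which represents the same end point), so $F$ is an all scale uniform equivalence. Finally, Lemma \ref{telescope} lets me transfer this equivalence back from the subchains to the original chains.

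For the ``$\Leftarrow$'' direction, suppose $\Phi\co end(X_k,\Phi_k)\to end(Y_k,\Psi_k)$ is an all scale uniform equivalence. Apply Lemma \ref{alpha} to obtain increasing maps $\alpha,\beta\co \bz\to\bz$ with $\gamma_{\varrho_\Phi}(\alpha(i))\leq \beta(i)$ and $\gamma_{\varrho_{\Phi^{-1}}}(\beta(i))\leq \alpha(i+1)$. These inequalities mean that $\Phi$ maps balls of radius $2^{\alpha(k)}$ inside balls of radius $2^{\beta(k)}$, and symmetrically for $\Phi^{-1}$. This lets me define $\mathcal{V}_{2k-1}\co X_{\alpha(k)}\to Y_{\beta(k)}$ by sending a ball $B$ to the unique ball of radius $2^{\beta(k)}$ containing $\Phi(B)$, and $\mathcal{V}_{2k}\co Y_{\beta(k)}\to X_{\alpha(k+1)}$ analogously using $\Phi^{-1}$. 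Surjectivity follows from surjectivity of $\Phi,\Phi^{-1}$.

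The key verification will be commutativity of the zig-zag diagram, i.e. $\mathcal{V}_{2k}\circ\mathcal{V}_{2k-1}=\tilde{\Phi}_k$ and $\mathcal{V}_{2k+1}\circ\mathcal{V}_{2k}=\tilde{\Psi}_k$. For a representative $x\in B(x_{\alpha(k)})$, we have $\Phi(x)\in \Phi(B(x_{\alpha(k)}))\subset B(\mathcal{V}_{2k-1}(x_{\alpha(k)}))$, hence $x=\Phi^{-1}(\Phi(x))\in \Phi^{-1}(B(\mathcal{V}_{2k-1}(x_{\alpha(k)})))\subset B(\mathcal{V}_{2k}(\mathcal{V}_{2k-1}(x_{\alpha(k)})))$; since the ball of radius $2^{\alpha(k+1)}$ containing $x$ is by definition $\tilde{\Phi}_k(x_{\alpha(k)})$, we conclude $\mathcal{V}_{2k}\circ\mathcal{V}_{2k-1}=\tilde{\Phi}_k$, using that $\Phi^{-1}\circ\Phi=id$ as single-valued maps. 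A symmetric argument handles the other composition. Triviality of the direct limit of $(Z_k,\mathcal{V}_k)$ follows from triviality of the two subchains together with the fact that consecutive compositions collapse to subchain maps.

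The main obstacle is the bookkeeping around the zig-zag commutativity in the backward direction: making sure the choice of $\alpha,\beta$ from Lemma \ref{alpha} lines up correctly with both $\Phi$ and $\Phi^{-1}$ so that the induced partition maps genuinely compose to the subchain maps on the nose rather than only ``up to closeness''. The ultrametric property — specifically that points in the image of a small ball sit inside a \emph{unique} slightly larger ball — is what turns approximate statements into exact commutative squares.
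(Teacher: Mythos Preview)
Your proposal is correct and the backward direction matches the paper's argument essentially verbatim: invoke Lemma~\ref{alpha} to get $\alpha,\beta$, use the inequalities to define the maps $\mathcal{V}_{2k-1},\mathcal{V}_{2k}$ canonically on balls, and verify that the compositions agree with the subchain maps because $\Phi$ is a bijection.

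For the forward direction, your argument works but the paper takes a shorter path. Rather than building explicit inverse maps $F,G$ via the previous proposition on morphisms of $D$--chains, the paper simply observes that both $(X_{\alpha(k)},\tilde\Phi_k)$ and $(Y_{\beta(k)},\tilde\Psi_k)$ are themselves subchains of the zig-zag chain $(Z_k,\mathcal{V}_k)$ (the odd- and even-indexed terms, respectively). A direct application of Lemma~\ref{telescope} then yields
\[
end(X_k,\Phi_k)\;\sim\;end(Z_k,\mathcal{V}_k)\;\sim\;end(Y_k,\Psi_k),
\]
with no need to construct $F,G$ or verify they are mutual inverses. Your route has the advantage of exhibiting the equivalence explicitly (which is useful later for the bi-Lipschitz refinement in Proposition~\ref{lipschitz}), while the paper's route is cleaner for the bare existence statement.
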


\begin{proof} If $(Z_k,\phi_k)$ is a common zig-zag $D$--chain, it suffices to check that $end(X_k\Phi_k))$ and $end(Y_k,\Psi_k)$ are all scale uniform equivalent to $end(Z_k,\phi_k)$ and this follows immediately from Lemma \ref{telescope}.

Now, suppose that there is an all scale unifom equivalence $\Phi\co end(X_k,\Phi_k) \to end(Y_k,\Psi_k)$. By Lemma \ref{alpha}, there are increasing maps $\alpha(\Phi),\beta(\Phi^{-1}) \co \bz \to \bz$ such that $\gamma_{\varrho_\Phi}(\alpha(i))\leq \beta(i)$, $\gamma_{\varrho_{\Phi^{-1}}}(\beta(i))\leq \alpha_{i+1}$ for every $i\in \bz$.

Therefore, $\Phi$ and $\Phi^{-1}$ canonically induce unique surjective maps $\mathcal{V}_{2i-1}\co X_{\alpha(i)}\to Y_{\beta(i)}$ and $\mathcal{V}_{2i}\co Y_{\beta(i)}\to X_{\alpha(i+1)}$. Since $\Phi$ is a bijection, $\mathcal{V}_{k+1} \circ \mathcal{V}_{k}$ coincides with the map induced by inclusion and therefore, making $Z_{2i-1}=X_{\alpha(i)}$ and $Z_{2i}=Y_{\beta(i)}$, $(Z_k,\mathcal{V}_k)$ is a common zig-zag $D$--chain of $(X_k,\Phi_k),(Y_k,\Psi_k)$.
\end{proof}

From this, and Proposition \ref{end}, it follows:

\begin{teorema}\label{caract1} Two complete ultrametric spaces are all scale uniform equivalent if and only if there is a common zig-zag $D$--chain between their associated $D$--chains. \hfill$\blacksquare$ 
\end{teorema}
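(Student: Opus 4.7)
The plan is to deduce the theorem as a direct corollary by chaining Proposition \ref{end} with Proposition \ref{carac1}, using that all scale uniform equivalence is an equivalence relation on metric spaces.

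First I would verify briefly that being all scale uniform equivalent is transitive: if $\Phi\co X\to Y$ and $\Psi\co Y\to Z$ are all scale uniform with expansion functions $\varrho_\Phi,\varrho_\Psi\co [0,\infty)\to[0,\infty)$ satisfying $\varrho(0)=0$ and $\lim_{t\to 0}\varrho(t)=0$, then the composition $\varrho_\Psi\circ\varrho_\Phi$ is again non-decreasing, vanishes at $0$, and tends to $0$ as $t\to 0$, so it serves as an expansion function for $\Psi\circ\Phi$. The same holds for inverses, so compositions of all scale uniform equivalences are all scale uniform equivalences. Reflexivity and symmetry are immediate from the definition.

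Now let $U_1,U_2$ be complete ultrametric spaces and $(X_k,\Phi_k),(Y_k,\Psi_k)$ their associated $D$--chains. By Proposition \ref{end}, $U_1$ is bi-Lipschitz (hence all scale uniform) equivalent to $end(X_k,\Phi_k)$, and $U_2$ is bi-Lipschitz equivalent to $end(Y_k,\Psi_k)$. By the transitivity argument above,
\[
U_1 \text{ and } U_2 \text{ are all scale uniform equivalent}
\iff
end(X_k,\Phi_k) \text{ and } end(Y_k,\Psi_k) \text{ are.}
\]
By Proposition \ref{carac1}, the right-hand condition is in turn equivalent to $(X_k,\Phi_k)\sim_{z-z}(Y_k,\Psi_k)$, which gives the theorem.

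There is no real obstacle here: the theorem is essentially a packaging statement, and all the work has already been done in Lemma \ref{telescope}, Lemma \ref{alpha}, and Propositions \ref{end} and \ref{carac1}. The only small care required is to confirm that the bi-Lipschitz equivalence supplied by Proposition \ref{end} is genuinely an all scale uniform equivalence (it is, since a bi-Lipschitz map $f$ with constants $c\leq L$ admits the linear expansion function $\varrho_f(t)=Lt$, and $\varrho_{f^{-1}}(t)=t/c$, both vanishing at $0$). Once this is noted, the biconditional follows by combining the two equivalences as above.
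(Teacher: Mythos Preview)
Your proposal is correct and matches the paper's approach exactly: the paper states the theorem as an immediate consequence of Proposition~\ref{end} and Proposition~\ref{carac1} (it is marked with a $\blacksquare$ and prefaced by ``From this, and Proposition~\ref{end}, it follows''). Your added verifications that bi-Lipschitz equivalence is all scale uniform and that the relation is transitive are fine elaborations of steps the paper leaves implicit.
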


Let $(X_k,\Phi_k),(Y_k,\Psi_k)$ be two $D$--chains and $(Z_k,\mathcal{V}_k)$ a common zig-zag $D$--chain with increasing maps $\alpha,\beta\co \bz \to \bz$ such that $Z_{2k-1}=X_{\alpha(k)}$ and $Z_{2k}=Y_{\beta(k)}$. Let us define $$f_Z\co  end(X_{\alpha(k)},\tilde{\Phi}_k) \to end(Y_{\beta(k)},\tilde{\Psi}_k)$$ such that for any end point $(x_{\alpha(k)})\in end(X_{\alpha(k)},\tilde{\Phi}_k)$, $f_Z((x_{\alpha(k)}))=(\mathcal{V}_{2k-1}(x_{\alpha(k)}))\in end(Y_{\beta(k)},\tilde{\Psi}_k)$. 

Let us recall that a function between metric spaces $f\co X\to Y$ is called \emph{bi-Lipschitz} if there is a constant $K>0$ such that for any pair of points $x,x'\in X$, $\frac{1}{K} \cdot d_X(x,x') \leq d_y(f(x),f(x')) \leq K\cdot d_X(x,x')$. If there is such a map, we say that $X,Y$ are bi-Lipschitz equivalent.

\begin{prop}\label{lipschitz} Given two $D$--chains $(X_k,\Phi_k),(Y_k,\Psi_k)$, then their end spaces $end(X_k,\Phi_k)$ and $end(Y_k,\Psi_k)$ are all scale uniform equivalent if and only if there are increasing sequences $\alpha,\beta\co \bz\to \bz$ such that $end(X_{\alpha(k)},\tilde{\Phi}_k)$ and $end(Y_{\beta(k)},\tilde{\Psi}_k)$ are bi-Lipschitz equivalent.
\end{prop}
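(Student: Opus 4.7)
The plan is to combine the two main technical results already in the paper: Proposition \ref{carac1} converts an all scale uniform equivalence into a common zig-zag $D$--chain (thereby producing $\alpha,\beta$), and Lemma \ref{telescope} lets me pass freely between a chain and any of its subchains up to all scale uniform equivalence. The bi-Lipschitz assertion then reduces to a local analysis of the zig-zag maps $\mathcal{V}_{2k-1}$ at the level of end sequences.

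For the ($\Rightarrow$) direction, suppose $end(X_k,\Phi_k)$ and $end(Y_k,\Psi_k)$ are all scale uniform equivalent. Proposition \ref{carac1} supplies a common zig-zag $D$--chain $(Z_k,\mathcal{V}_k)$ with increasing maps $\alpha,\beta\co\bz\to\bz$, and commutativity of the zig-zag diagram immediately yields the identities
$$\mathcal{V}_{2k}\circ\mathcal{V}_{2k-1}=\tilde{\Phi}_k \quad \mbox{and} \quad \mathcal{V}_{2k+1}\circ\mathcal{V}_{2k}=\tilde{\Psi}_k.$$
I would then study the map $f_Z$ defined immediately before the statement and show it is a bi-Lipschitz bijection. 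For bijectivity, I would verify that $g_Z((y_{\beta(k)}))=(\mathcal{V}_{2k-2}(y_{\beta(k-1)}))_k$ is a well-defined two-sided inverse; well-definedness reduces, via the identities above, to $\tilde{\Phi}_k\circ\mathcal{V}_{2k-2}=\mathcal{V}_{2k}\circ\tilde{\Psi}_{k-1}$, after which $g_Z\circ f_Z=\mbox{id}$ and $f_Z\circ g_Z=\mbox{id}$ follow at once. For the metric estimate, if $D((x_{\alpha(k)}),(y_{\alpha(k)}))=2^{k_0}$ then $x_{\alpha(k_0)}=y_{\alpha(k_0)}$, so applying $\mathcal{V}_{2k_0-1}$ gives agreement of $f_Z(x)$ and $f_Z(y)$ at level $k_0$ and hence $D'(f_Z(x),f_Z(y))\leq 2^{k_0}$. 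Conversely, $\mathcal{V}_{2k_1-1}(x_{\alpha(k_1)})=\mathcal{V}_{2k_1-1}(y_{\alpha(k_1)})$ forces $\tilde{\Phi}_{k_1}(x_{\alpha(k_1)})=\tilde{\Phi}_{k_1}(y_{\alpha(k_1)})$ upon applying $\mathcal{V}_{2k_1}$, so $x_{\alpha(k_1+1)}=y_{\alpha(k_1+1)}$ and $D(x,y)\leq 2^{k_1+1}$. Thus $f_Z$ is bi-Lipschitz with constant $K=2$.

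For the ($\Leftarrow$) direction, suppose there are increasing $\alpha,\beta$ and a bi-Lipschitz equivalence $h\co end(X_{\alpha(k)},\tilde{\Phi}_k)\to end(Y_{\beta(k)},\tilde{\Psi}_k)$. Any bi-Lipschitz map is all scale uniform, via the linear expansion function $\varrho(t)=Kt$, which trivially satisfies $\varrho(0)=0$ and $\lim_{t\to 0}\varrho(t)=0$. Composing $h$ with the all scale uniform equivalences $end(X_k,\Phi_k)\to end(X_{\alpha(k)},\tilde{\Phi}_k)$ and $end(Y_k,\Psi_k)\to end(Y_{\beta(k)},\tilde{\Psi}_k)$ furnished by Lemma \ref{telescope} produces the desired all scale uniform equivalence between $end(X_k,\Phi_k)$ and $end(Y_k,\Psi_k)$.

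I expect the main obstacle to lie in the forward direction, specifically in checking that the candidate inverse $g_Z$ lands in the end space and that $f_Z$ preserves the ultrametric quantitatively. Once the two commutativity identities above are in hand, everything reduces to a careful diagram chase around the zig-zag, but keeping track of the indices (so that the correct $\mathcal{V}_{2k-1}$, $\mathcal{V}_{2k}$, $\tilde{\Phi}_k$ and $\tilde{\Psi}_k$ meet at each stage) is the delicate part; the backward direction is essentially a triple composition and should be routine.
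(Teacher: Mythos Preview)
Your proposal is correct and follows essentially the same route as the paper: for the forward direction you invoke Proposition \ref{carac1} to obtain a common zig-zag $D$--chain and then show that the induced map $f_Z$ is bi-Lipschitz with constant $2$ (the paper records exactly the inequality $D_{T(\beta)}(f_Z(x),f_Z(y))\leq D_{T(\alpha)}(x,y)\leq 2\,D_{T(\beta)}(f_Z(x),f_Z(y))$), and for the backward direction you reduce to Lemma \ref{telescope}. Your treatment is in fact more detailed than the paper's---you explicitly build the inverse $g_Z$ and spell out the index chase---but the underlying argument is the same.
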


\begin{proof} If $end(X_{\alpha(k)},\tilde{\Phi}_k)$ and $end(Y_{\beta(k)},\tilde{\Psi}_k)$ are bi-Lipschitz equivalent, then they are, in particular, all scale equivalent and so they are $end(X_n,\Phi_n)$ and $end(Y_n,\Psi_n)$ by Lemma \ref{telescope}.  

If $end(X_k,\Phi_k)$ and $end(Y_k,\Psi_k)$ are all scale equivalent then, by Proposition \ref{carac1}, there is a zig-zag common $D$--chain $(Z_k,\mathcal{V}_k)$ defined by sequences $\alpha,\beta$. The map $f_Z\co end(X_{\alpha(k)},\tilde{\Phi}_k) \to end(Y_{\beta(k)},\tilde{\Psi}_k)$ defined above holds that for any pair of end points $(x_{\alpha(k)}),(y_{\alpha(k)})\in end(X_{\alpha(k)},\tilde{\Phi}_k)$, $D_{T(\beta)}(f_Z((x_{\alpha(k)})),f_Z((y_{\alpha(k)}))) \leq  D_{T(\alpha)}(x_{\alpha(k)},y_{\alpha(k)}) \leq 2\cdot D_{T(\beta)}(f_Z((x_{\alpha(k)})),f_Z((y_{\alpha(k)})))$.
\end{proof}

\begin{cor}\label{cor1} Two ultrametric spaces $U_1,U_2$ are asymorphic if and only if there are increasing sequences $\alpha,\beta\co \bn\to \bn$ such that, for $(X_k,\Phi_k),(Y_k,\Psi_k)$ their associated $D$--chains, $end(X_{\alpha(n)},\tilde{\Phi}_n)$ and $end(Y_{\beta(n)},\tilde{\Psi}_n)$ are bi-Lipschitz equivalent. \hfill$\blacksquare$
\end{cor}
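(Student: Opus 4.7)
The corollary is the coarse-category analog of Proposition \ref{lipschitz}, and my plan is to combine Proposition \ref{coarse} (asymorphic $=$ coarse equivalent), Proposition \ref{end} ($U$ is bi-Lipschitz to its end space), and a one-sided version of the induction from Lemma \ref{alpha}. The shift from $\bz$-indexing to $\bn$-indexing reflects the fact that asymorphisms ignore the small-scale structure, so only the positive half of each $D$--chain is relevant; in effect the $\alpha$-subchain indexed on $\bn$ plays the role of a $D_+$--chain.

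For the $(\Leftarrow)$ direction, assume bi-Lipschitz equivalence of the subchain ends. A bi-Lipschitz map is in particular an asymorphism, so it is enough to argue that the canonical map $end(X_k,\Phi_k)\to end(X_{\alpha(n)},\tilde{\Phi}_n)$ is itself an asymorphism. The expansion-function computation from the proof of Lemma \ref{telescope} still produces a bornologous surjection in both directions; the only clause that may fail is the $\lim_{t\to 0}$ condition, but this is irrelevant in the coarse category. Combining this with Proposition \ref{end} chains the asymorphisms $U_1 \leftrightarrow end(X_k,\Phi_k) \leftrightarrow end(X_{\alpha(n)},\tilde{\Phi}_n) \leftrightarrow end(Y_{\beta(n)},\tilde{\Psi}_n) \leftrightarrow end(Y_k,\Psi_k) \leftrightarrow U_2$.

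For $(\Rightarrow)$, start with an asymorphism $\Phi\co U_1 \Rightarrow U_2$ with expansion functions $\varrho_\Phi,\varrho_{\Phi^{-1}}$, and run only the forward half of the induction of Lemma \ref{alpha}: inductively pick $\beta(n)\ge \gamma_{\varrho_\Phi}(\alpha(n))$ and $\alpha(n+1)\ge \gamma_{\varrho_{\Phi^{-1}}}(\beta(n))$, strictly increasing each step, obtaining $\alpha,\beta\co \bn\to \bn$. By the ultrametric nesting property from Lemma \ref{ultrametrico}(b), the diameter bound provided by $\varrho_\Phi$ forces $\Phi$ to send each ball of $X_{\alpha(n)}$ into a \emph{unique} ball of $Y_{\beta(n)}$, inducing a well-defined surjection $\mathcal{V}_{2n-1}\co X_{\alpha(n)}\to Y_{\beta(n)}$, and symmetrically a surjection $\mathcal{V}_{2n}\co Y_{\beta(n)}\to X_{\alpha(n+1)}$. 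This is a common zig-zag $D_+$--chain, and the induced end-space map $f_Z$ is bi-Lipschitz with constant $2$ by the same estimate used in the proof of Proposition \ref{lipschitz}. The main obstacle is precisely this last passage from the bornologous multi-map $\Phi$ to honest single-valued surjections between partitions; without the ultrametric property (b) guaranteeing uniqueness of the enclosing ball, one would only recover another multi-map and lose the bi-Lipschitz control.
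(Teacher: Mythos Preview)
The paper treats this corollary as an immediate consequence of Proposition~\ref{lipschitz} together with Proposition~\ref{end}; no separate argument is given. (In fact the printed statement appears to contain typos: sitting in the all-scale-uniform section and being deduced from Proposition~\ref{lipschitz}, it should presumably read ``all scale uniform equivalent'' with $\alpha,\beta\co\bz\to\bz$, matching the very next corollary which also uses $\bz\to\bz$.)

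Your $(\Leftarrow)$ direction is fine, but there is a genuine gap in your $(\Rightarrow)$ direction, precisely at the step you flagged as ``the main obstacle''. Starting from an asymorphism $\Phi$, which is only a multi-map, the zig-zag maps $\mathcal V_{2n-1}$ are indeed well-defined surjections, and the upper estimate $D_{T(\beta)}(f_Z(x),f_Z(x'))\le D_{T(\alpha)}(x,x')$ goes through. But the lower estimate does \emph{not}: if $\mathcal V_{1}(x_{\alpha(1)})=\mathcal V_{1}(x'_{\alpha(1)})$ while $x_{\alpha(1)}\neq x'_{\alpha(1)}$, the commutation $\mathcal V_{2}\circ\mathcal V_{1}=\tilde\Phi_1$ only forces $x_{\alpha(2)}=x'_{\alpha(2)}$, so $f_Z$ can collapse two distinct end points at distance $2^{2}$ to the same image. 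Thus $f_Z$ need not be injective, hence cannot be bi-Lipschitz. This failure is exactly why, in the coarse section, the paper explicitly says ``Proposition~\ref{lipschitz} is not true in the case of $D_+$--chains, since the induced map between the end spaces is not necessarily injective'' and replaces the bi-Lipschitz conclusion by the weaker two-sided estimate of Proposition~\ref{LargeLip}. The difference with Proposition~\ref{lipschitz} is that there $\Phi$ is an all-scale-uniform equivalence, in particular a \emph{bijection}, which is what forces $\mathcal V_{k+1}\circ\mathcal V_k$ to coincide with the inclusion map on the nose and makes $f_Z$ injective; for a bornologous multi-map you lose this.
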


This can be translated into relations between ultrametric spaces avoiding $D$--chains. Proposition 2.2 in \cite{B-Al} states

\begin{prop} Let $(X,d)$ be a metric space. The metric $d$ is an ultrametric if and only if $f(d)$ is a metric for evey nondecreasing function $f\co \br_+ \to \br_+$. \hfill$\blacksquare$
\end{prop}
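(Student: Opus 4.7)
The plan is to prove both implications directly: the forward direction is immediate from the ultrametric inequality together with the monotonicity of $f$, while the converse is handled by contrapositive using an explicit step-function construction.

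For the forward direction, suppose $(X,d)$ is ultrametric and $f\co \br_+\to \br_+$ is nondecreasing. Symmetry of $f\circ d$ is inherited from $d$. For the triangle inequality I use the elementary fact that a nondecreasing function preserves maxima: $f(\max\{a,b\})=\max\{f(a),f(b)\}$ for all $a,b\in\br_+$. Combining this with $d(x,z)\leq \max\{d(x,y),d(y,z)\}$ and with nonnegativity of $f$ gives
$$f(d(x,z))\;\leq\;f(\max\{d(x,y),d(y,z)\})\;=\;\max\{f(d(x,y)),f(d(y,z))\}\;\leq\;f(d(x,y))+f(d(y,z)).$$
The identity of indiscernibles $f(d(x,y))=0 \iff x=y$ is clear under the implicit convention that $f(0)=0$ and $f(t)>0$ for $t>0$, which is what makes the statement $f\circ d$ is a metric meaningful.

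For the converse I argue by contrapositive. Assume $d$ is not an ultrametric, so there exist $x,y,z\in X$ with $d(x,z)>\max\{d(x,y),d(y,z)\}$. Set $a=d(x,z)$ and $b=\max\{d(x,y),d(y,z)\}$, so $a>b$; note $b>0$, since $b=0$ would force $x=y=z$ and hence $a=0$, a contradiction. I then produce a nondecreasing $f\co \br_+\to\br_+$ under which $f\circ d$ fails the triangle inequality at the triple $(x,y,z)$. A simple three-level step function works: define $f(0)=0$, $f(t)=1$ for $t\in (0,b]$, and $f(t)=3$ for $t>b$. Then $f$ is nondecreasing, yet
$$f(d(x,z))\;=\;3\;>\;2\;=\;f(d(x,y))+f(d(y,z)),$$
so $f\circ d$ is not a metric.

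The main point is recognizing that the pointwise equality $f(\max\{a,b\})=\max\{f(a),f(b)\}$ for monotone $f$ converts the ultrametric inequality into the triangle inequality for $f\circ d$; the only minor obstacle is the choice of step function in the contrapositive, and this reduces to producing any $f$ whose jump across $b$ exceeds $f(d(x,y))+f(d(y,z))$, which the explicit construction above achieves. No induction or additional machinery is required.
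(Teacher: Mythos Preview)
Your proof is correct. Note, however, that the paper does not actually supply its own proof of this proposition: it is quoted verbatim as Proposition~2.2 from \cite{B-Al} and marked with $\blacksquare$ as a cited result. So there is no in-paper argument to compare against; your write-up simply fills in what the paper outsources.

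A couple of small remarks on your argument. First, your observation that the statement only makes sense under the convention $f(0)=0$ and $f(t)>0$ for $t>0$ is well taken; without it the ``if and only if'' fails trivially (e.g.\ $f\equiv 1$), and this hypothesis is indeed implicit in the cited source. Second, in the contrapositive you assert $f(d(x,y))+f(d(y,z))=2$ rather than $\leq 2$; this is fine, but it relies on both $d(x,y)$ and $d(y,z)$ being strictly positive. You justified $b>0$, which only gives that their maximum is positive; the stronger claim follows because if, say, $d(x,y)=0$ then $x=y$ and hence $d(x,z)=d(y,z)\leq b$, contradicting $a>b$. You might add one line to that effect, or simply weaken the equality to $\leq 2$, which is all you need.
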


In particular, the new metric is also an ultrametric. Given an ultrametric space $(U,d)$ and a non-decreasing map $f\co \br_+ \to \br_+$, let us denote this new ultrametric as $d_f$, where $d_{f}(x,y):=f(d(x,y))$. 

Let $(U,d)$ be an ultrametric space and an increasing map $\gamma \co \bz \to \bz$. Let us define $f_{\gamma} \co \br_+ \to \bn$ a non-decreasing function such that for any $t\in (2^{\gamma(k-1)},2^{\gamma(k)}]$ $f(t)=2^{k}$ for every $k$. Let us denote simply by $(U,d(\gamma))$ the ultrametric space $(U,d_{f_{\gamma}})$ which depends only on the original ultrametric and $\gamma$.

Thus, from Corolary \ref{cor1} we obtain that,

\begin{cor} Two ultrametric spaces $(U_1,d_1),(U_2,d_2)$ are asymorphic if and only if there are increasing maps $\gamma_1,\gamma_2\co \bz \to \bz$ such that  
$(U_1,d_1(\gamma_1))$ and $(U_2,d_2(\gamma_2)$ are bi-Lipschitz equivalent. \hfill$\blacksquare$
\end{cor}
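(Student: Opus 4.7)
The plan is to reduce the statement to Corollary \ref{cor1} by identifying the rescaled ultrametric space $(U, d(\gamma))$ with the end space of the $\gamma$-subchain of the $D$--chain associated to $(U, d)$.

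The main step I would isolate is the following claim: if $(U, d)$ is a complete ultrametric space with associated $D$--chain $(X_k, \Phi_k)$, and $\gamma \co \bz \to \bz$ is increasing, then $(U, d(\gamma))$ is bi-Lipschitz equivalent to $end(X_{\gamma(k)}, \tilde{\Phi}_k)$. To see this, recall that two points $x, y \in U$ lie in the same ball of $X_{\gamma(k)}$ precisely when $d(x,y) \leq 2^{\gamma(k)}$. Hence if $d(x,y) \in (2^{\gamma(k_0-1)}, 2^{\gamma(k_0)}]$, the distance in the subchain end space between the corresponding end sequences is exactly $2^{k_0}$, which coincides with $d(\gamma)(x,y) = f_\gamma(d(x,y))$. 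Composing with the bi-Lipschitz bijection $U \to end(X_k, \Phi_k)$ supplied by Proposition \ref{end} then yields the claimed bi-Lipschitz equivalence.

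With this claim in hand, both directions of the corollary become routine translations. For the forward direction, if $U_1, U_2$ are asymorphic then Corollary \ref{cor1} produces increasing $\alpha, \beta$ such that $end(X_{\alpha(n)}, \tilde{\Phi}_n)$ and $end(Y_{\beta(n)}, \tilde{\Psi}_n)$ are bi-Lipschitz equivalent; setting $\gamma_1 = \alpha$, $\gamma_2 = \beta$ (extended to $\bz$ trivially on the negative part, say by $\gamma_i(k) = k$ for $k \leq 0$, which does not affect the bi-Lipschitz class since the small-scale behaviour is irrelevant to asymorphism), the claim delivers the required bi-Lipschitz equivalence between $(U_1, d_1(\gamma_1))$ and $(U_2, d_2(\gamma_2))$. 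The reverse direction is the same chain of implications read backwards: the claim turns the bi-Lipschitz equivalence of the rescaled spaces into a bi-Lipschitz equivalence of the corresponding subchain end spaces, and Corollary \ref{cor1} then yields asymorphism.

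The main obstacle I anticipate is not logical but bookkeeping: one must line up carefully the rescaling convention of $f_\gamma$ (which sends $(2^{\gamma(k-1)}, 2^{\gamma(k)}]$ to $2^k$) with the metric on the $\gamma$-subchain end space, checking that the half-open intervals match correctly at the endpoints. A secondary, essentially cosmetic point is reconciling the domain $\bn$ appearing in Corollary \ref{cor1} with the domain $\bz$ stated here, which is handled by the trivial extension described above.
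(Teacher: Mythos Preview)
Your proposal is correct and follows essentially the same route as the paper: the corollary is stated there with a bare $\blacksquare$, the preceding sentence being ``Thus, from Corollary \ref{cor1} we obtain that,\dots'', so the paper's argument is precisely the reduction to Corollary \ref{cor1} via the identification of $(U,d(\gamma))$ with $end(X_{\gamma(k)},\tilde{\Phi}_k)$ that you spell out. Your explicit verification that $d(\gamma)(x,y)=2^{k_0}$ coincides with the subchain-end-space distance is exactly the bookkeeping the paper suppresses when it writes down $f_\gamma$.
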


\section{Coarse equivalences}

In this section we treat the category $\mathcal{C}_2$. All we do, is to consider only the right side of the chain in the previous section and adapt the construction in some technical details.

Given a bornologous multimap $\Phi$ and its expansion function $\varrho_\Phi$, let us define $\gamma_{\varrho_\Phi}\co \bn \to \bn$ as follows, \begin{equation}\label{gamma2} \gamma_{\varrho_\Phi}(n):=[log_2 (\varrho_\Phi(2^{n}))]+1.\end{equation} Hence, for all points $x,x'\in X$, if $d_X(x,x')\leq 2^{n}$ then $d_Y(f(x),f(x'))\leq 2^{\gamma_{\varrho_\Phi}(n)}$ and $\gamma_{\varrho_\Phi}$ is non-decreasing. We may assume, with no loss of generality, that $\lim_{t\to \infty}\varrho_\Phi(t)=\infty$ and therefore $\lim_{n\to \infty}\gamma_{\varrho_\Phi}(n)=\infty$. 

\begin{nota} If $\Phi$ is an asymorphism between unbounded metric spaces then necessarily $\lim_{t\to \infty}\varrho_\Phi(t)=\infty$ since $\Phi^{-1}$ is a bornologous surjective map.
\end{nota}

There is a correspondence between ultrametric spaces and $D_+$--chains. For each $n\in \bn $ let $X_n$ be the partition of $U$ in closed balls of radius $2^{n}$. For each $x_n\in X_n$ let us denote by $B(x_n)$ the associated closed ball in $U$. Let $\Phi_n\co X_n \to X_{n+1}$ the map canonically induced by the inclusion for any $n\in \bn$. $(X_n,\Phi_n)$ will be called the \emph{$D_+$--chain associated to $U$}. Conversely, given a $D_+$--chain we can obtain an ultrametric space as follows.

The end space is:

$$end(X_n,\Phi_n):=\{(x_n)_{n\in \bn} \ | \ x_n\in X_n \mbox{ and } \Phi(x_n)=x_{n+1}\},$$ and the metric $$D((x_n),(y_n))=2^{n_0} \mbox{ where } n_0=\min\{n: x_n=y_n\}.$$ 

$D$ is well defined since $\underset{\to}{\lim}(X_n,\Phi_n)$ is trivial and $(end(X_n,\Phi_n),D)$ is an ultrametric space.

\begin{prop}\label{end2} If $U$ is an ultrametric space and $(X_n,\Phi_n)$ is the $D_+$--chain associated to $U$, then $U$ and $end(X_n,\Phi_n)$ are asymorphic (i.e., coarse equivalent). 
\end{prop}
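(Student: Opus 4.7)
The plan is to exhibit an explicit asymorphism $i \co U \to end(X_n,\Phi_n)$, mirroring the construction of Proposition \ref{end} but working in the coarse category $\mathcal{C}_2$. Define $i$ as the single-valued map sending each $u \in U$ to the sequence $(x_n)$ where $x_n \in X_n$ is the unique partition element whose associated closed ball $B(x_n)$ of radius $2^n$ contains $u$. Compatibility $\Phi_n(x_n) = x_{n+1}$ is automatic from the nesting $B(x_n) \subset B(x_{n+1})$, so $i(u) \in end(X_n,\Phi_n)$ is well defined. Once $i$ is in hand, the asymorphism conclusion will follow by checking that both $i$ and its inverse multi-map are surjective and bornologous, and then invoking Proposition \ref{coarse}.

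Next I would verify the four properties. For surjectivity of $i$, given any end $(x_n)$, the ball $B(x_0)$ is non-empty, and for any $u \in B(x_0)$ Lemma \ref{ultrametrico}(b) forces the closed ball of radius $2^n$ about $u$ to equal $B(x_n)$, so $i(u) = (x_n)$. Surjectivity of the multi-map $i^{-1} \co (x_n) \mapsto B(x_0)$ is then automatic since $u \in i^{-1}(i(u))$ for every $u \in U$. For bornologousness of $i$: by the ultrametric property, $d(u,v) \le 2^n$ implies $u,v$ lie in a common closed ball of radius $2^n$, whence $D(i(u),i(v)) \le 2^n$; so the function rounding $t>0$ up to the next power of $2$ (and sending $0$ to $0$) is an expansion function $\varrho_i$. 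For $i^{-1}$: if $E \subset end(X_n,\Phi_n)$ satisfies $diam(E) \le 2^n$, then all ends in $E$ agree at level $n$, so $i^{-1}(E)$ sits inside a single ball of radius $2^n$ and has diameter at most $2^n$; hence $\varrho_{i^{-1}}(t) = \max(t,1)$ suffices.

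The one conceptual point to flag, compared with Proposition \ref{end}, is that without the completeness hypothesis $i$ generally fails to be a bijection: distinct points sharing the same base-level ball $B(x_0)$ get identified, so the inverse direction is genuinely multi-valued. This is harmless in $\mathcal{C}_2$ precisely because every non-trivial fiber has diameter bounded by the base scale, a uniformly bounded collapse that is invisible to the coarse category. With the two expansion functions in hand, Proposition \ref{coarse} concludes that $U$ and $end(X_n,\Phi_n)$ are coarse equivalent.
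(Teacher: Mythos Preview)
Your proof is correct and follows essentially the same route as the paper: define the natural map from $U$ to $end(X_n,\Phi_n)$ sending a point to the sequence of balls containing it, and verify that this map and its inverse multi-map are surjective and bornologous. The paper phrases the same map as a multi-map from the outset and computes the image diameters exactly rather than bounding them, but the content is identical.

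Two small remarks. First, in the paper's conventions the $D_+$--chain is indexed by $n\in\bn=\{1,2,\ldots\}$, so the base level is $X_1$ (balls of radius $2$); your references to $B(x_0)$ and the bound $\varrho_{i^{-1}}(t)=\max(t,1)$ should be $B(x_1)$ and $\max(t,2)$ respectively. Second, your aside about completeness is slightly off: the failure of injectivity here has nothing to do with dropping completeness and everything to do with the fact that a $D_+$--chain only records balls of radius $\geq 2$, so all points in a common radius-$2$ ball collapse regardless of whether $U$ is complete. In Proposition~\ref{end} the bijection comes from the $D$--chain indexing over all of $\bz$ (balls of arbitrarily small radius) together with completeness; here neither ingredient is present, but as you correctly observe, the bounded-diameter fibers are invisible coarsely.
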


\begin{proof} Consider the multi-map $\Phi\co U\Rightarrow end(X_n,\Phi_n)$ where $\Phi:=\{(x,(x_n)_{n\in \bn})| x\in B(x_1)\}$. Thus, if $diam(A)\leq 2$ then there exists some $x_1$ such that $A\subset B(x_1)$ and $diam(\Phi(A))=0$. If $2^{n}< diam(A)\leq 2^{n+1}$, then for each $x_n\in X_n$, $A\not \subset B(x_n)$ and, by the properties of the ultrametric, there is a unique $x_{n+1}\in X_{n+1}$ such that $A\subset B(x_{n+1})$ and therefore, $diam(\Phi(A))=2^{n+1}$ for every $n> 1$. Hence, $\Phi$ is an asymorphism.
\end{proof}

\begin{definicion} A \emph{morphism of $D_+$-chains} $(f_n,\sigma) \co (X_n,\Phi_n)\to (Y_n,\Psi_n)$ consists of a non-decreasing function $\sigma\co \bn \to \bn$ such that $\lim_{n\to \infty}=\infty$, and maps $f_n\co X_n \to Y_{\sigma(n)}$ such that the following diagram commutes:
\begin{small}$$
\xymatrix{X_{1} \ar[dr]_{f_{1}} & \longrightarrow   &  X_{2}  \ar[dr]_{f_{2}}  &  \longrightarrow   & X_{3}\ar[dr]_{f_{3}}  & \longrightarrow \\
               & Y_{\sigma(1)} \longrightarrow & \cdots  & \longrightarrow Y_{\sigma(2)} \longrightarrow & \cdots  & \longrightarrow  Y_{\sigma(3)}}
$$\end{small}
\end{definicion}

\begin{lema} \label{telescope2} If $(X_n,\Phi_n)$ is a $D_+$chain, $\alpha \co \bn \to \bn$ is an increasing map  and $(X_{\alpha(n)},\tilde{\Phi}_n)$ is the $\alpha$-subchain, then $end(X_n,\Phi_n)$ is asymorphic to $end(X_{\alpha(n)},\tilde{\Phi}_n)$.
\end{lema}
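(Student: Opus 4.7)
The plan is to mirror the proof of Lemma \ref{telescope}, simplified to the one-sided coarse setting where we only need bornologousness rather than all scale uniformity. First, I would note that the canonical forgetful map $i\co end(X_n,\Phi_n)\to end(X_{\alpha(n)},\tilde{\Phi}_n)$ sending $(x_n)_{n\in\bn}$ to the subsequence $(x_{\alpha(n)})_{n\in\bn}$ is a bijection: since each $\Phi_n$ is surjective and the composition $\Phi_{\alpha(n+1)-1}\circ\cdots\circ\Phi_{\alpha(n)}$ of inclusion-induced maps determines the intermediate levels uniquely from the subsequence, $i^{-1}$ is well defined.

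Next, imitating the construction in Lemma \ref{telescope}, I would define $\Lambda\co \bn\to\bn$ by $\Lambda(z)=\min\{n:\alpha(n)\geq z\}$. Because $\alpha$ is increasing and unbounded, $\Lambda$ is well defined, non-decreasing, and $\lim_{z\to\infty}\Lambda(z)=\infty$. Then set $\Gamma\co[0,\infty)\to[0,\infty)$ by $\Gamma(0)=0$ and $\Gamma(x)=2^{\Lambda(n)}$ for $x\in(2^{n-1},2^n]$. The key computation is the same as before: if $(x_n),(y_n)\in end(X_n,\Phi_n)$ satisfy $D((x_n),(y_n))=2^{n_0}$, then their images agree exactly at the levels $\alpha(n)\geq n_0$, so $D'(i(x_n),i(y_n))=2^{\Lambda(n_0)}=\Gamma(2^{n_0})$. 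Hence $\Gamma$ is an expansion function for $i$ and $i$ is bornologous.

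For the inverse direction, I would define $\Gamma'\co[0,\infty)\to[0,\infty)$ by $\Gamma'(0)=0$ and $\Gamma'(x)=2^{\alpha(n)}$ for $x\in(2^{n-1},2^n]$. If two subsequence end-points agree first at the $n_0$-th level of the subchain (i.e.\ distance $2^{n_0}$), then the corresponding full end-points agree from index $\alpha(n_0)$ onward, so their original distance is at most $2^{\alpha(n_0)}=\Gamma'(2^{n_0})$. Thus $\Gamma'$ is an expansion function for $i^{-1}$ and $i^{-1}$ is bornologous. Since both $i$ and $i^{-1}$ are (single-valued, in particular surjective) bornologous maps, $i$ is an asymorphism.

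There is no real obstacle here beyond being careful that the chains are one-sided (indexed by $\bn$) and that we only require bornologousness in each direction, not uniform continuity; the combinatorial bookkeeping via $\Lambda$ and the piecewise-dyadic $\Gamma,\Gamma'$ carries over verbatim from Lemma \ref{telescope}.
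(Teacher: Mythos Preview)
Your approach is essentially the same as the paper's, but there is a small error in your bijectivity claim. The map $i$ need not be injective: an end point $(x_n)_{n\in\bn}$ records all levels $n\geq 1$, while the subsequence $(x_{\alpha(n)})_{n\in\bn}$ only records levels $\geq \alpha(1)$. Your argument that the intermediate levels between $\alpha(n)$ and $\alpha(n+1)$ are determined by $x_{\alpha(n)}$ is correct, but it says nothing about the levels $1,\dots,\alpha(1)-1$; if $\alpha(1)>1$, different choices there give distinct end points with the same image under $i$. This is why the paper explicitly refers to $i^{-1}$ as a \emph{multi-map} rather than a map.

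This also breaks your expansion function at $0$: for a genuine multi-map $i^{-1}$, a singleton can have a preimage of positive diameter (here at most $2^{\alpha(1)}$, since all preimages agree from level $\alpha(1)$ onward), so you cannot take $\Gamma'(0)=0$. The fix is trivial---set $\Gamma'(t)=2^{\alpha(1)}$ for $t\leq 2$, say---and the rest of your argument (the definitions of $\Lambda$, $\Gamma$, $\Gamma'$ and the dyadic bookkeeping) goes through unchanged and matches the paper's proof.
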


\begin{proof} There is a canonical map $i\co end(X_n,\Phi_n)\to end(X_{\alpha(n)},\tilde{\Phi}_n)$ with $i((x_n))=(x_{\alpha(n)})$. 

Let us define the function $\Lambda\co \bn \to \bn$ such that $\Lambda(n)=\min\{k:\alpha(k)\geq n\}$. Since $\alpha$ is increasing, it follows that $\Lambda$ is non-decreasing and $\lim_{n\to \infty}\Lambda(n)=\infty$. Now, consider the function $\Gamma\co [0,\infty) \to [0,\infty)$ such that $\Gamma(x)=2^{\lambda(1)}$ for $x\leq 1$ and for any $x\in (2^{n-1},2^{n}]$, $\Gamma(x)=2^{\Lambda(n)}$, $n\in \bn$. Clearly $\Gamma$ is non-decreasing and $\lim_{x\to \infty}\Gamma(x)=\infty$. For any pair of end points $(x_n),(y_n)\in end(X_n,\Phi_n)$ with $D((x_n),(y_n))=2^{n_0}$ the distance between their correspondent subsequences $i(x_n)=(x_{\alpha(n)}),i(y_n)=(y_{\alpha(n)})$ is exactly $D'((x_{\alpha(n)}),(y_{\alpha(n)}))=\Gamma(2^{n_0})=2^{\Lambda(n_0)}$. Then $\Gamma$ is an extension function for $i\co end(X_n,\Phi_n) \to end((X_{\alpha(n)},\tilde{\Phi}_n))$ and $i$ is bornologous. A similar argument works for the multi-map $i^{-1}$.
\end{proof}

\begin{prop} Consider two ultrametric spaces $U_1,U_2$ and $(X_n,\Phi_n), (Y_n,\Psi_n)$ their associated $D_+$--chains. Then, there is a bornologous multi-map $\Phi \co U_1\Rightarrow U_2$ if and only if there is a morphism of $D_+$--chains $(f_n,\sigma) \co (X_n,\Phi_n)\to (Y_n,\Psi_n)$.
\end{prop}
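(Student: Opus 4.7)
The plan is to adapt the argument used for the analogous proposition about morphisms of $D$-chains and all scale uniform maps (proved earlier in Section 3), making the routine changes to pass from the two-sided $\bz$-indexed setting to the one-sided $\bn$-indexed setting, and from single-valued maps to multi-maps. Both directions transfer essentially verbatim.

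For the forward direction, given a bornologous multi-map $\Phi\co U_1\Rightarrow U_2$ with expansion function $\varrho_\Phi$, I set $\sigma:=\gamma_{\varrho_\Phi}$ using equation (\ref{gamma2}). For each $x_n\in X_n$ the associated closed ball $B(x_n)\subset U_1$ has diameter at most $2^n$, so $\mathrm{diam}(\Phi(B(x_n)))\leq\varrho_\Phi(2^n)\leq 2^{\sigma(n)}$. By Lemma \ref{ultrametrico}, any non-empty subset of $U_2$ of diameter $\leq 2^{\sigma(n)}$ lies in a unique element of the partition $Y_{\sigma(n)}$, so I define $f_n(x_n)$ to be that element. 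Commutativity of the morphism diagram then follows by uniqueness: if $\Phi_n(x_n)=x_{n+1}$ then $B(x_n)\subset B(x_{n+1})$, so $\Phi(B(x_n))\subset\Phi(B(x_{n+1}))\subset B(f_{n+1}(x_{n+1}))$, and the composition of $\Psi_k$'s carries $f_n(x_n)$ to the unique ball of $Y_{\sigma(n+1)}$ containing $\Phi(B(x_n))$, which must be $f_{n+1}(x_{n+1})$.

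For the converse, given a morphism $(f_n,\sigma)$, I define a single-valued map
\[\hat\Phi\co end(X_n,\Phi_n)\to end(Y_{\sigma(n)},\tilde\Psi_n),\qquad (x_n)\mapsto (f_n(x_n)),\]
which is well defined because commutativity $\tilde\Psi_n\circ f_n=f_{n+1}\circ\Phi_n$ makes $(f_n(x_n))_{n\in\bn}$ an end point of the $\sigma$-subchain. For two end points with $D((x_n),(x'_n))=2^{n_0}$, the image sequences agree from index $n_0$ onwards, so $\hat\Phi$ is $1$-Lipschitz, in particular bornologous. Composing with the asymorphism between $end(Y_{\sigma(n)},\tilde\Psi_n)$ and $end(Y_n,\Psi_n)$ from Lemma \ref{telescope2}, and with the asymorphisms $U_i\leftrightarrow end(X_n,\Phi_n)$ and $U_i\leftrightarrow end(Y_n,\Psi_n)$ from Proposition \ref{end2}, and using that the composition of bornologous multi-maps is bornologous, yields the required bornologous multi-map $U_1\Rightarrow U_2$.

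The main obstacle is ensuring well-definedness of $f_n$ in the forward direction: one needs $\Phi(B(x_n))$ to be non-empty for every ball, which amounts to the standing assumption that the multi-map $\Phi$ is totally defined (every $x\in U_1$ admits at least one image). The secondary technicality that $\sigma$ is only required to be non-decreasing rather than strictly increasing is harmless: constant-$\sigma$ steps simply force the corresponding $f_n$ values to be compatible, and as in the remark following the definition of a morphism of $D$-chains, one can always pass to an $\alpha$-subchain along which $\sigma$ is strictly increasing without changing the coarse type, by Lemma \ref{telescope2}.
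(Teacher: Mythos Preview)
Your proof is correct and follows essentially the same approach as the paper. The only cosmetic difference is in the converse direction: the paper defines the induced multi-map $\Phi\co end(X_n,\Phi_n)\Rightarrow end(Y_n,\Psi_n)$ directly by $\Phi((x_n))=\{(y_n):y_{\sigma(n)}=f_n(x_n)\}$, whereas you factor this as a single-valued map into the $\sigma$-subchain end space followed by the asymorphism of Lemma~\ref{telescope2}; unwinding your composition yields precisely the paper's multi-map, so the two arguments coincide.
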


\begin{proof} If there is a bornologous multi-map $\Phi:U_1\Rightarrow U_2$, consider the map $\gamma_{\varrho_\Phi}\co \bn \to \bn$ from (\ref{gamma2}). Making $\sigma=\gamma_{\varrho_\Phi}$, $\Phi$ canonically induces the maps $f_n\co X_n\to Y_{\sigma(n)}$ and the diagram commutes.

The morphism $(f_n,\alpha)$ induces a multi-map $\Phi\co end(X_n,\Phi_n)\Rightarrow end(Y_n,\Psi_n)$ where $\Phi((x_n))$ is the set of sequences $(y_n)\in end(Y_n,\Psi_n)$ such that $y_{\sigma(n)}=f_n(x_n)$. It is immediate to check that this multi-map is bornologous, and from Lemma \ref{telescope2} together with Proposition \ref{end2}, it follows that there is a bornologous multi-map $\Phi:U_1\Rightarrow U_2$.
\end{proof}

\begin{lema}\label{alpha2} If $\Phi \co X\Rightarrow Y$ is an asymorphism then there are increasing maps $\alpha(\Phi),\beta(\Phi^{-1}) \co \bz_+ \to \bz_+$ such that $\gamma_{\varrho_\Phi}(\alpha(i))\leq \beta(i)$, $\gamma_{\varrho_{\Phi^{-1}}}(\beta(i))\leq \alpha(i+1)$ for every $i\in \bz_+$.
\end{lema}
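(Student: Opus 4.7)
The plan is to directly adapt the forward-induction half of the proof of Lemma \ref{alpha}, dropping the backward-induction half since the domain is now $\bz_+$ rather than all of $\bz$. Because $\Phi$ is an asymorphism, both $\Phi$ and $\Phi^{-1}$ are surjective bornologous multi-maps, so both admit expansion functions, and hence the associated non-decreasing maps $\gamma_{\varrho_\Phi},\gamma_{\varrho_{\Phi^{-1}}}\co \bn\to \bn$ from (\ref{gamma2}) are defined. By the remark immediately following (\ref{gamma2}) we may also assume they tend to $\infty$.

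First I would fix a base case by setting $\alpha(1):=1$ and $\beta(1):=\max\{1,\gamma_{\varrho_\Phi}(\alpha(1))\}$, which lies in $\bz_+$ and already gives $\gamma_{\varrho_\Phi}(\alpha(1))\leq \beta(1)$. Then, assuming $\alpha(i),\beta(i)\in\bz_+$ have been defined for some $i\geq 1$, I would recursively set
$$\alpha(i+1):=\max\{\alpha(i)+1,\ \gamma_{\varrho_{\Phi^{-1}}}(\beta(i))\},$$
$$\beta(i+1):=\max\{\beta(i)+1,\ \gamma_{\varrho_\Phi}(\alpha(i+1))\}.$$
Each maximum is a finite positive integer, so the recursion is well defined. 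The summand $+1$ in each clause guarantees that $\alpha(i+1)>\alpha(i)$ and $\beta(i+1)>\beta(i)$, so both sequences are strictly increasing. The second term inside each maximum supplies precisely the required inequality: $\gamma_{\varrho_{\Phi^{-1}}}(\beta(i))\leq \alpha(i+1)$ by the first clause, and $\gamma_{\varrho_\Phi}(\alpha(i+1))\leq \beta(i+1)$ by the second. Shifting the index in the latter and combining with the base case yields $\gamma_{\varrho_\Phi}(\alpha(i))\leq \beta(i)$ for all $i\in\bz_+$, as required.

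There is no real obstacle here: the key advantage over Lemma \ref{alpha} is that working on $\bz_+$ means we never need to construct values for negative indices, so the delicate backward recursion in the proof of Lemma \ref{alpha} — which depended on $\lim_{k\to -\infty}\gamma_\Phi(k)=-\infty$ and the choice of threshold indices $k_{\alpha(i+1)},k_{\beta(i)}$ — can be omitted entirely. The only subtlety is ensuring the base values are positive integers, which is handled by the max with $1$.
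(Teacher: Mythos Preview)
Your proof is correct and essentially identical to the paper's: both set $\alpha(1)=1$, define $\beta(1)$ in terms of $\gamma_{\varrho_\Phi}(1)$, and then use the same recursive formulas $\alpha(i)=\max\{\alpha(i-1)+1,\gamma_{\varrho_{\Phi^{-1}}}(\beta(i-1))\}$ and $\beta(i)=\max\{\beta(i-1)+1,\gamma_{\varrho_\Phi}(\alpha(i))\}$. The only difference is your extra $\max\{1,\cdot\}$ in the base case for $\beta(1)$, which is a harmless safeguard.
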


\begin{proof} First, let $\alpha(1)=1$ and $\beta(1)=\gamma_\Phi(1)$. 

If we have defined $\alpha(i-1),\beta(i-1)$ for any $i>1$ then, it suffices make $\alpha(i)= \max\{\alpha(i-1)+1,\gamma_{\Phi^{-1}}(\beta(i-1))\}$ and $\beta(i)= \max\{\beta(i-1)+1,\gamma_{\Phi}(\alpha(i))\}$. 
\end{proof}

\begin{prop}\label{Asym} If  $(X_n,\Phi_n),(Y_n,\Psi_n)$ are two $D_+$--chains, then $(X_n,\Phi_n)\sim_{z-z}(Y_n,\Psi_n)$ if and only if 
$end(T,v)$ and $end(T',w)$ are asymorphic.
\end{prop}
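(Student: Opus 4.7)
The plan is to mirror Proposition \ref{carac1}, replacing Lemma \ref{telescope} and Lemma \ref{alpha} with their one-sided analogues Lemma \ref{telescope2} and Lemma \ref{alpha2}.

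For the direction from zig-zag chain to asymorphism, take a common zig-zag $D_+$--chain $(Z_n,\mathcal{V}_n)$ of $(X_n,\Phi_n)$ and $(Y_n,\Psi_n)$ with increasing sequences $\alpha,\beta$. Commutativity of the zig-zag diagram gives $\mathcal{V}_{2k}\circ\mathcal{V}_{2k-1}=\tilde{\Phi}_k$ and $\mathcal{V}_{2k+1}\circ\mathcal{V}_{2k}=\tilde{\Psi}_k$, so the sub-$D_+$-chain of $(Z_n,\mathcal{V}_n)$ at odd indices (corresponding to $\alpha_Z(k)=2k-1$) is exactly the $\alpha$-subchain $(X_{\alpha(k)},\tilde{\Phi}_k)$. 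Two applications of Lemma \ref{telescope2} then give $end(Z_n,\mathcal{V}_n)$ asymorphic to $end(X_n,\Phi_n)$; the symmetric argument through the even-indexed entries yields $end(Z_n,\mathcal{V}_n)$ asymorphic to $end(Y_n,\Psi_n)$. Composing produces the desired asymorphism.

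For the converse, start with an asymorphism $\Phi\co end(X_n,\Phi_n)\Rightarrow end(Y_n,\Psi_n)$ and apply Lemma \ref{alpha2} to obtain increasing sequences $\alpha,\beta\co \bz_+\to\bz_+$ satisfying $\gamma_{\varrho_\Phi}(\alpha(i))\leq \beta(i)$ and $\gamma_{\varrho_{\Phi^{-1}}}(\beta(i))\leq \alpha(i+1)$. For each $x_{\alpha(i)}\in X_{\alpha(i)}$ the image $\Phi(B(x_{\alpha(i)}))$ of the associated ball of radius $2^{\alpha(i)}$ has diameter at most $\varrho_\Phi(2^{\alpha(i)})\leq 2^{\beta(i)}$; since closed balls of radius $2^{\beta(i)}$ partition the target, this image lies in a unique $B(y_{\beta(i)})$, and I set $\mathcal{V}_{2i-1}(x_{\alpha(i)}):=y_{\beta(i)}$. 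The maps $\mathcal{V}_{2i}\co Y_{\beta(i)}\to X_{\alpha(i+1)}$ are defined analogously from $\Phi^{-1}$. Surjectivity of each $\mathcal{V}_k$ is immediate from surjectivity of $\Phi$ (resp.\ $\Phi^{-1}$) combined with the ultrametric partition property.

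The main technical point, and where I expect the most care to be required, is verifying that this data really defines a common zig-zag $D_+$--chain, i.e.\ that $\mathcal{V}_{2i}\circ\mathcal{V}_{2i-1}=\tilde{\Phi}_i$ and $\mathcal{V}_{2i+1}\circ\mathcal{V}_{2i}=\tilde{\Psi}_i$. Because $\Phi$ is a genuine multi-map, one cannot just write $\Phi^{-1}\Phi=\operatorname{id}$. What saves the argument is that both $\Phi$ and $\Phi^{-1}$ are surjective, which yields the set-theoretic inclusions $A\subseteq \Phi^{-1}(\Phi(A))$ for all $A\subseteq end(X_n,\Phi_n)$ and $B\subseteq\Phi(\Phi^{-1}(B))$ for all $B\subseteq end(Y_n,\Psi_n)$. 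The inclusion chain $B(x_{\alpha(i)})\subseteq \Phi^{-1}(\Phi(B(x_{\alpha(i)})))\subseteq \Phi^{-1}(B(y_{\beta(i)}))\subseteq B(x_{\alpha(i+1)})$, together with uniqueness of the containing ball in the partition $X_{\alpha(i+1)}$, then forces $\mathcal{V}_{2i}\circ\mathcal{V}_{2i-1}(x_{\alpha(i)})=\tilde{\Phi}_i(x_{\alpha(i)})$; the symmetric computation handles the $Y$-side and finishes the commutativity check.
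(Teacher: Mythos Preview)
Your proof is correct and follows essentially the same route as the paper's own argument: forward direction via Lemma \ref{telescope2}, converse via Lemma \ref{alpha2} and the induced maps $\mathcal{V}_k$. Your treatment of the commutativity check is actually more explicit than the paper's, which dispatches it with the phrase ``the ball associated to the vertex $\mathcal{V}_{k+1}\circ\mathcal{V}_k(x)$ will contain, by construction, the ball associated to the vertex $x$''; your inclusion chain $B(x_{\alpha(i)})\subseteq\Phi^{-1}(\Phi(B(x_{\alpha(i)})))\subseteq\Phi^{-1}(B(y_{\beta(i)}))\subseteq B(x_{\alpha(i+1)})$ is exactly what underlies that phrase and correctly handles the fact that here $\Phi$ is a multi-map rather than a bijection as in Proposition \ref{carac1}.
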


\begin{proof} If there is a common zig-zag chain, the existence of an asymorphism follows immediately from Lemma \ref{telescope2}.

Now, suppose that there is an asymorphism $\Phi \co end(X_n,\Phi_n)\Rightarrow end(Y_n,\Psi_n)$. By Lemma \ref{alpha2}, if $\Phi$ is an asymorphism then there are increasing maps $\alpha(\Phi),\beta(\Phi^{-1}) \co \bn \to \bn$ such that $\gamma_{\varrho_\Phi}(\alpha(i))\leq \beta(i)$, $\gamma_{\varrho_{\Phi^{-1}}}(\beta(i))\leq \alpha_{i+1}$ for every $i\in \bn$. 

Therefore, $\Phi$ and $\Phi^{-1}$ canonically induce (as we saw in \ref{carac1}) unique surjective maps $\mathcal{V}_{2i-1}\co X_{\alpha(i)}\to Y_{\beta(i)}$ and $\mathcal{V}_{2i}\co Y_{\beta(i)}\to X_{\alpha(i+1)}$. The ball associated to the vertex $\mathcal{V}_{k+1} \circ \mathcal{V}_{k}(x)$ will contain, by construction, the ball associated to the vertex $x$, and  $\mathcal{V}_{k+1} \circ \mathcal{V}_{k}$ coincides with the map induced by the inclusion. Therefore, making $Z_{2i-1}=X_{\alpha(i)}$ and $Z_{2i}=Y_{\beta(i)}$, $(Z_n,\mathcal{V}_n)$ is a common zig-zag $D_+$--chain of $(X_n,\Phi_n),(Y_n,\Psi_n)$.
\end{proof}

From this, and Proposition \ref{end2}, it follows:

\begin{teorema} Two ultrametric spaces are coarse equivalent if and only if there is a common zig-zag chain between their associated $D_+$--chains. \hfill$\blacksquare$
\end{teorema}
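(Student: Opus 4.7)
The plan is to chain together the equivalences already established in this section; the theorem is essentially a corollary packaging Proposition \ref{end2}, Proposition \ref{Asym}, and Proposition \ref{coarse}.

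Let $U_1,U_2$ be ultrametric spaces with associated $D_+$--chains $(X_n,\Phi_n)$ and $(Y_n,\Psi_n)$. First I would invoke Proposition \ref{end2} to identify $U_i$ with $end$ of its associated chain up to asymorphism: specifically, $U_1$ is asymorphic to $end(X_n,\Phi_n)$ and $U_2$ is asymorphic to $end(Y_n,\Psi_n)$. Using Proposition \ref{coarse}, which says that asymorphism is the same as coarse equivalence, and the fact that coarse equivalence is transitive (the composition of bornologous surjective multi-maps is bornologous and surjective, so asymorphisms compose), I would conclude that $U_1$ and $U_2$ are coarse equivalent if and only if $end(X_n,\Phi_n)$ and $end(Y_n,\Psi_n)$ are asymorphic.

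Next I would apply Proposition \ref{Asym} to translate this into a condition on the chains themselves: $end(X_n,\Phi_n)$ and $end(Y_n,\Psi_n)$ are asymorphic if and only if $(X_n,\Phi_n)\sim_{z-z}(Y_n,\Psi_n)$. Concatenating the two equivalences yields exactly the statement of the theorem.

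There is no real obstacle here; all the substantive work was done in Proposition \ref{Asym}, where the zig-zag chain is extracted from an asymorphism using Lemma \ref{alpha2} and the telescope Lemma \ref{telescope2}, and in Proposition \ref{end2}. The only minor point to be careful about is transitivity of asymorphism for multi-maps, but this is routine from the definitions (composing expansion functions gives an expansion function, and surjectivity is preserved under composition).
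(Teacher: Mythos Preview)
Your proposal is correct and is exactly the approach the paper takes: the theorem is stated immediately after Proposition \ref{Asym} with the sentence ``From this, and Proposition \ref{end2}, it follows,'' and is marked with a tombstone rather than given its own proof. Your explicit mention of Proposition \ref{coarse} and of the transitivity of asymorphism only spells out what the paper leaves implicit.
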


Let $(X_n,\Phi_n),(Y_n,\Psi_n)$ be two $D_+$--chains and $(Z_n,\mathcal{V}_n)$ a common zig-zag $D_+$--chain with increasing maps $\alpha,\beta\co \bn \to \bn$ such that $Z_{2n-1}=X_{\alpha(n)}$ and $Z_{2n}=Y_{\beta(n)}$. Let us define $$f_Z\co  end(X_{\alpha(n)},\tilde{\Phi}_n) \to end(Y_{\beta(n)},\tilde{\Psi}_n)$$ such that for any end point $(x_{\alpha(n)})\in end(X_{\alpha(n)},\tilde{\Phi}_n)$, $f_Z((x_{\alpha(n)}))=(\mathcal{V}_{2n-1}(x_{\alpha(n)}))\in end(Y_{\beta(n)},\tilde{\Psi}_n)$.

Proposition \ref{lipschitz} is not true in the case of $D_+$--chains, since the induced map between the end spaces is not necessarily injective. There would be a bi-Lipschitz equivalence restricted to large scale. Moreover,  

\begin{prop}\label{LargeLip} Given two $D_+$--chains $(X_n,\Phi_n),(Y_n,\Psi_n)$, then $end(X_n,\Phi_n)$ and $end(Y_n,\Psi_n)$ are asymorphic if and only if there are increasing sequences $\alpha,\beta\co \bn\to \bn$ and a map   $F\co end(X_{\alpha(n)},\tilde{\Phi}_n) \to end(Y_{\beta(n)},\tilde{\Psi}_n)$ such that for any pair of end points $(x_{\alpha(n)}),(y_{\alpha(n)})\in end(X_{\alpha(n)},\tilde{\Phi}_n)$, $$D_{T(\beta)}(f((x_{\alpha(n)})),f((y_{\alpha(n)}))) \leq  D_{T(\alpha)}((x_{\alpha(n)}),(y_{\alpha(n)}))$$ and if $D_{T(\alpha)}((x_{\alpha(n)}),(y_{\alpha(n)}))$ > 2, then $$D_{T(\alpha)}((x_{\alpha(n)}),(y_{\alpha(n)})) \leq 2\cdot D_{T(\beta)}(f((x_{\alpha(n)})),f((y_{\alpha(n)}))).$$
\end{prop}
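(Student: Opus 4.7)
For the only-if direction, the plan is to apply Proposition \ref{Asym} to obtain a common zig-zag $D_+$--chain $(Z_n,\mathcal{V}_n)$ of $(X_n,\Phi_n)$ and $(Y_n,\Psi_n)$, with increasing sequences $\alpha,\beta\co \bn \to \bn$, and to take $F$ to be the canonical map $f_Z$ defined just before the statement. The first inequality follows by direct index-tracking: if $x_{\alpha(n)}=y_{\alpha(n)}$ for every $n \geq k_0$, applying $\mathcal{V}_{2n-1}$ pointwise gives $F(x)_n = F(y)_n$ for every $n \geq k_0$, whence $D_{T(\beta)}(F(x),F(y)) \leq 2^{k_0} = D_{T(\alpha)}(x,y)$. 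The reverse bound exploits the zig-zag commutativity $\mathcal{V}_{2m} \circ \mathcal{V}_{2m-1} = \tilde{\Phi}_m$: if $F(x)$ and $F(y)$ first coincide at index $m_0$, then applying $\mathcal{V}_{2m_0}$ to $\mathcal{V}_{2m_0-1}(x_{\alpha(m_0)}) = \mathcal{V}_{2m_0-1}(y_{\alpha(m_0)})$ forces $x_{\alpha(m_0+1)} = y_{\alpha(m_0+1)}$, hence $D_{T(\alpha)}(x,y) \leq 2^{m_0+1} = 2 \cdot D_{T(\beta)}(F(x),F(y))$.

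For the if direction, the plan is to show that the hypothesized $F$ itself furnishes an asymorphism between the subchain end spaces, and then to invoke Lemma \ref{telescope2} to transfer this to the original end spaces. The first inequality directly provides the expansion function $\varrho_F(t)=t$ for $F$, so $F$ is bornologous. For the inverse multi-map, the second inequality yields an expansion function $\varrho_{F^{-1}}(t)=\max\{4,2t\}$: for any set $A$ in the codomain of diameter $t$ and any $x,y \in F^{-1}(A)$, either $D_{T(\alpha)}(x,y) \leq 2$ or the reverse bound applies and gives $D_{T(\alpha)}(x,y) \leq 2t$. Surjectivity of $F$ at the level of end sequences (which follows from the surjectivity of the maps $\mathcal{V}_{2n-1}$ combined with the compatibility forced by the zig-zag diagram) then completes the asymorphism.

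The main obstacle is the asymmetry of the second inequality relative to the unconditional bi-Lipschitz statement in Proposition \ref{lipschitz} for the two-sided $D$--chain case. In the $D_+$ setting the initial index behaves as a boundary: distinct ends $x \neq y$ that differ only at $\alpha(1)$ collapse under $f_Z$ (since $\mathcal{V}_1$ need not be injective), producing a pair with $D_{T(\alpha)}(x,y)=4$ but $D_{T(\beta)}(F(x),F(y))=0$. This is precisely what forces the reverse Lipschitz estimate to be conditional on $D_{T(\alpha)}(x,y)>2$ and yields only a large-scale bi-Lipschitz control. In the only-if direction, if one insists on an unconditional inequality for the constructed $F$, one can pass to a further subchain dropping the offending initial level, which identifies the colliding ends without changing the asymorphism class of the end space.
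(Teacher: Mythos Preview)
Your overall strategy is exactly the paper's: for the only-if direction, invoke Proposition~\ref{Asym} to produce a common zig-zag $D_+$--chain and take $F=f_Z$, then verify the two displayed inequalities by the index-tracking you describe (this is the ``immediate check'' the paper leaves to the reader); for the if direction, argue that $F$ is an asymorphism between the subchain end spaces and apply Lemma~\ref{telescope2}.

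There is, however, a conflation in your if-direction argument. You justify surjectivity of $F$ by appealing to the surjectivity of the maps $\mathcal{V}_{2n-1}$ and the zig-zag commutativity, but those objects belong to the only-if direction, where $F$ was \emph{constructed} as $f_Z$. In the if direction one is merely handed a map $F$ satisfying the two inequalities, and neither forces surjectivity or even coarse density of the image: a one-point domain mapping into an unbounded codomain satisfies both vacuously. The paper's own proof is no more careful here---it simply asserts the subchain end spaces ``are, in particular, asymorphic''---so you are not introducing a new gap, but the sentence invoking the $\mathcal{V}_{2n-1}$'s must go, since it cites structure that is unavailable. Separately, your obstacle paragraph locates the collapse phenomenon correctly but mis-reads the threshold: the bad pair you exhibit has $D_{T(\alpha)}=4>2$, so the condition ``$>2$'' does not exclude it; your follow-up remark about passing to a further subchain is the actual repair needed for $f_Z$ to satisfy the stated reverse inequality at the boundary level.
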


\begin{proof} If there is such a map $F$, then  $end(X_{\alpha(n)},\tilde{\Phi}_n), end(Y_{\beta(n)},\tilde{\Psi}_n)$ are, in particular, asymorphic and so they are $end(X_n,\Phi_n)$ and $end(Y_n,\Psi_n)$ by Lemma \ref{telescope2}.  

If $end(X_n,\Phi_n)$ and $end(Y_n,\Psi_n)$ are asymorphic then, by Proposition \ref{Asym}, there is a common zig-zag $D_+$--chain given by sequences $\alpha,\beta$. It is immediate to check that the map $f_Z\co end(X_{\alpha(n)},\tilde{\Phi}_n) \to end(Y_{\beta(n)},\tilde{\Psi}_n)$ holds the conditions above.
\end{proof}

%\begin{cor} Two ultrametric spaces $U_1,U_2$ are asymorphic if and only if given their associated $D_+$--chains %$(X_n,\Phi_n),(Y_n,\Psi_n)$ there are increasing sequences $\alpha,\beta\co \bn\to \bn$ and a map  $F\co %end(X_{\alpha(n)},\tilde{\Phi}_n) \to end(Y_{\beta(n)},\tilde{\Psi}_n)$ such that for any pair of end points %$(x_{\alpha(n)}),(y_{\alpha(n)})\in end(X_{\alpha(n)},\tilde{\Phi}_n)$, %$D_{T(\beta)}(f((x_{\alpha(n)})),f((y_{\alpha(n)}))) \leq  D_{T(\alpha)}((x_{\alpha(n)}),(y_{\alpha(n)}))$ and if %$D_{T(\alpha)}((x_{\alpha(n)}),(y_{\alpha(n)}))$ > 2, then $D_{T(\alpha)}((x_{\alpha(n)}),(y_{\alpha(n)})) \leq 2\cdot %D_{T(\beta)}(f((x_{\alpha(n)})),f((y_{\alpha(n)})))$.
%\end{cor}

%Let $(U,d)$ be an ultrametric space and $(n_i), i> 0$ an increasing sequence of numbers. Let us define $f_{(n_i)} \co %\br_+ \to \bn$ a non-decreasing function such that for any $0<t\leq 2^{n_1}$, $f(t)=2^{i}$ and for any $t\in %(2^{n_{i-1}},2^{n_i}]$ $f(t)=2^{i}$ for every $i>1$. Let us denote simply by $(U,d(n_i))$ the ultrametric space %$(U,d_{f_{(n_i)}})$ which depends only on the original ultrametric and the sequence $(n_i)$.

%\begin{cor} Two ultrametric spaces $(U_1,d_1),(U_2,d_2)$ are asymorphic if and only if there are increasing sequences %of numbers $(n_i),(m_i)$ such that  
%$(U_1,d_1(n_i))$ and $(U_2,d_2(m_i)$ are bi-Lipschitz.
%\end{cor}

\section{Uniform homeomorphisms}

In this section we treat the category $\mathcal{C}_3$. The idea is to consider only the left side of the $D$--chain but to avoid using as index set the negative integers we change the orientation of the chain and therefore the construction of $\gamma$. Also, we have to be careful with the fact that the expansion function is defined on some interval $[0,S]$. Let $\bn_{\geq n_0}:=\{n\in \bn \ | \ n\geq n_0 \}$.

Given a uniformly continuous map $\Phi$ and its expansion function $\varrho_\Phi \co [0,S]\to [0,\frac{1}{2}]$ let $n_0\in \bn$ such that $2^{-n_0}\leq S < 2^{-n_0+1}$ if $S<\frac{1}{2}$ or $n_0=1$ otherwise, and let us define $\gamma_{\varrho_\Phi}\co \bn_{\geq n_0} \to \bn$ as follows, \begin{equation}\label{gamma3} \gamma_{\varrho_\Phi}(n):=[-log_2 (\varrho_\Phi(2^{-n}))].\end{equation}

Hence, for all points $x,x'\in X$, if $d_X(x,x')\leq  2^{-n}$ then $d_Y(\Phi(x),\Phi(x'))\leq 2^{-\gamma_{\varrho_\Phi}(n)}$ and $\gamma_{\varrho_\Phi}$ is non-decreasing, $\lim_{n\to \infty}\gamma_{\varrho_\Phi}(n)=\infty$ since $\Phi$ is uniformly continuous.

Given an ultrametric space $U$ there is a $D_-$--chain associated to it. For each $n\in \bn $ let $X_n$ be the partition of $U$ in balls of radius $2^{-n}$. Let $\Phi_n\co X_{n+1}\to X_n$ the map canonically induced by the inclusion for any $n\in \bn$. $(X_n,\Phi_n)$ will be called the \emph{$D_-$--chain associated to $U$}. Conversely, given a $D_-$--chain we can obtain an ultrametric space as follows.

The end space is then:

$$end(X_n,\Phi_n):=\{(x_n)_{n\in \bn} \ | \ x_n\in X_n \mbox{ and } \Phi(x_{n+1})=x_{n}\},$$ and let the metric be $$D((x_n),(y_n))= \left\{ \begin{tabular}{l} $2^{-n_0} \mbox{ if there is } n_0=\max\{n: x_n=y_n \},$\\

$1 \mbox{ if } x_n\neq y_n \ \forall \, n.$\end{tabular}
 \right.$$
Clearly, $D$ is an ultrametric. 

\begin{prop}\label{end3} If $(U,d)$ is a complete ultrametric space and $(X_n,\Phi_n)$ is the $D_-$--chain associated to $U$, then $(U,d)$ and $end(X_n,\Phi_n)$ are uniformly homeomorphic. 
\end{prop}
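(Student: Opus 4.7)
The plan is to construct a natural bijection $i\colon U \to end(X_n,\Phi_n)$ and then read off its bi-uniform continuity directly from the relationship between $d$ and $D$. The argument parallels the proof of Proposition \ref{end}, but with reversed indexing and with the metric $D$ on the end space now capped at $1$ rather than unbounded.

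First I would define $i(x):=(x_n)_{n\in\bn}$ where $x_n\in X_n$ is the unique ball of radius $2^{-n}$ containing $x$. The compatibility $\Phi_n(x_{n+1})=x_n$ is automatic because a ball of radius $2^{-(n+1)}$ is contained in a unique ball of the coarser partition $X_n$ (Lemma \ref{ultrametrico}(b)), so indeed $i(x)\in end(X_n,\Phi_n)$. Injectivity of $i$ is immediate since $\mathrm{diam}(B(x_n))\leq 2^{-n}\to 0$ by Lemma \ref{ultrametrico}(c). For surjectivity, given an end $(x_n)$, pick any $a_n\in B(x_n)$; since $a_m\in B(x_n)$ for all $m\geq n$, the sequence $(a_n)$ is Cauchy, and by completeness of $U$ it converges to some $x\in U$ lying in every (closed) $B(x_n)$ by Lemma \ref{ultrametrico}(f). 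By uniqueness of the ball of each radius through $x$, we obtain $i(x)=(x_n)$.

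The last step is to verify uniform continuity of $i$ and $i^{-1}$. The key observation, from parts (a) and (c) of Lemma \ref{ultrametrico}, is that $d(x,y)\leq 2^{-n}$ if and only if $x$ and $y$ lie in the same member of $X_n$, i.e., $x_n=y_n$. Thus $d(x,y)\leq 2^{-n}$ is equivalent to $\max\{m:x_m=y_m\}\geq n$, which in turn is equivalent to $D(i(x),i(y))\leq 2^{-n}$. Given $\epsilon>0$, choosing $n$ with $2^{-n}<\epsilon$ and taking $\delta:=2^{-n}$ works for both $i$ and $i^{-1}$. The only genuinely non-trivial ingredient is invoking completeness to produce the intersection point of the nested balls; everything else is a direct translation between the two metrics, and bi-Lipschitz behaviour at small scales falls out as a byproduct.
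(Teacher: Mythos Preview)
Your proof is correct and follows essentially the same route as the paper's: construct the natural bijection $i$ (using completeness for surjectivity), observe that $d(x,y)\leq 2^{-n}$ iff $x$ and $y$ lie in the same member of $X_n$, and conclude uniform continuity in both directions. The paper's version is terser---it simply asserts the bijection and records the small-scale bi-Lipschitz inequality $d\leq D\leq 2d$---whereas you spell out the Cauchy/nested-ball argument for surjectivity and phrase the conclusion in $\epsilon$--$\delta$ form, but the underlying argument is the same.
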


\begin{proof} First, note that if $(U,d)$ is complete there is a bijection $i\co U\to end(X_n,\Phi_n)$.

Notice that, by the properties of the ultrametric, $d(x,y)\leq  2^{-n}$ if and only if the points are in the same ball in the partition $X_n$. Hence, if $d(x,y)\leq  2^{-n}$, $d(x,y)\leq D(i(x),i(y))\leq 2d(x,y)$ and $i$ is a uniform homeomorphism.
\end{proof}

In the case of uniform maps, we need to consider in the description of the morphisms of $D_-$ chains the radius such that the image of the ball will have diameter bounded by 1/2 (i.e. the interval $[0,S]$ on which the expansion function is defined): 

\begin{definicion} A \emph{morphism of $D_-$-chains} $(f_n,\sigma,n_0) \co (X_n,\Phi_n)\to (Y_n,\Psi_n)$ consists of a natural number $n_0$, a non-decreasing function $\sigma\co \bn_{\geq n_0} \to \bn$ such that $\lim_{n\to \infty}\sigma(n)=\infty$, and maps $f_n\co X_n \to Y_{\sigma(n)}$ \ $\forall n\geq n_0$ such that the following diagram commutes:
\begin{small}$$
\xymatrix{  \longleftarrow  & X_{n} \ar[dl]_{f_{n}} & \longleftarrow   &  X_{n+1}  \ar[dl]_{f_{n+1}}  &  \longleftarrow   & X_{n+2}\ar[dl]_{f_{n+2}}   \\
           Y_{\sigma(n)}  \longleftarrow  & \cdots  &  \longleftarrow  Y_{\sigma(n+1)}  \longleftarrow  & \cdots  &  \longleftarrow   Y_{\sigma(n+2)} &  \longleftarrow    }
$$\end{small}
\end{definicion}

\begin{lema} \label{telescope3} If $(X_n,\Phi_n)$ is a $D_-$--chain, $\alpha \co \bn \to \bn$ is an increasing map  and $(X_{\alpha(n)},\tilde{\Phi}_n)$ is the $\alpha$-subchain, then $end(X_n,\Phi_n)$ is uniformly homeomorphic to $end(X_{\alpha(n)},\tilde{\Phi}_n)$.
\end{lema}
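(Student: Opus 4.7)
The plan is to mirror the argument used for Lemma \ref{telescope2}, adapting it to the reversed orientation of $D_-$-chains, which encode small-scale rather than large-scale behavior. The building blocks — a canonical bijection between end spaces, a combinatorial function $\Lambda$ encoding how $\alpha$ re-indexes, and a piecewise-dyadic expansion function $\Gamma$ — carry over directly, provided one is careful about the domain $[0,S]$ on which the expansion function of a uniformly continuous map is required to take values in $[0,1/2]$.

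First, I would introduce the canonical restriction $i\co end(X_n,\Phi_n) \to end(X_{\alpha(n)},\tilde{\Phi}_n)$ given by $i((x_n))=(x_{\alpha(n)})$ and verify that it is a bijection. Injectivity follows from the fact that if two end points $(x_n),(y_n)$ disagree, then (because $\Phi_n\co X_{n+1}\to X_n$ and coincidence at a later index forces coincidence at all earlier ones) they disagree on a tail; since $\alpha(n)\to \infty$ the tail meets the subindex set. Surjectivity is by extension: any $(y_n)\in end(X_{\alpha(n)},\tilde{\Phi}_n)$ determines a unique consistent sequence on all of $\bn$ by applying compositions of the $\Phi_k$ for indices between $\alpha(n)$ and $\alpha(n+1)$.

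Next, I would define $\Lambda\co \bn_{\geq \alpha(1)}\to \bn$ by $\Lambda(n)=\max\{k:\alpha(k)\leq n\}$. This $\Lambda$ is non-decreasing with $\lim_{n\to \infty}\Lambda(n)=\infty$. Choosing $S$ small enough that $\Lambda$ on the relevant range is at least $1$ (e.g., $S=2^{-\alpha(2)}$), set $\Gamma\co [0,S]\to [0,1/2]$ by $\Gamma(0)=0$ and $\Gamma(x)=2^{-\Lambda(n)}$ for $x\in (2^{-(n+1)},2^{-n}]$. The central computation is that if $D((x_n),(y_n))=2^{-n_0}$ with $2^{-n_0}\leq S$, then $i(x)$ and $i(y)$ coincide at subindex $k$ exactly when $\alpha(k)\leq n_0$, giving $D'(i(x),i(y))=2^{-\Lambda(n_0)}=\Gamma(2^{-n_0})$. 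So $\Gamma$ is an expansion function and $i$ is uniformly continuous. A symmetric argument, with the roles of $\alpha$ and $\Lambda$ interchanged, supplies an expansion function for $i^{-1}$.

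The main subtlety will be the boundary behaviour: the expansion function must land in $[0,1/2]$ on some $[0,S]$, and one must ensure that pairs of end points which never coincide (distance $1$) fall outside this domain and so are irrelevant to the uniform-continuity estimate. This is handled by restricting $S$ small enough, which is precisely what guarantees that $\Lambda(n)\geq 1$ in the relevant range. Beyond that detail, the argument is structurally identical to Lemma \ref{telescope2}, just read with the arrows reversed.
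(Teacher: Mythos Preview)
Your proposal is correct and follows essentially the same route as the paper: both use the canonical restriction map $i$, the combinatorial index function $\Lambda(n)=\max\{k:\alpha(k)\leq n\}$, and a piecewise-dyadic expansion function $\Gamma$ to verify that $D'(i(x),i(y))=2^{-\Lambda(n_0)}$, then invoke symmetry for $i^{-1}$. You add an explicit bijectivity check and are somewhat more careful about confining $\Gamma$ to a domain $[0,S]$ on which it lands in $[0,1/2]$, details the paper leaves implicit, but the argument is structurally the same.
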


\begin{proof} Consider the canonical map $i\co end(X_n,\Phi_n)\to end(X_{\alpha(n)},\tilde{\Phi}_n)$. 

Let us define the function $\Lambda\co \bn \to \bn$ such that $\Lambda(n)=\max\{k:\alpha(k)\leq n\}$. Since $\alpha$ is increasing, it follows that $\Lambda$ is non-decreasing and $\lim_{n\to \infty}\Lambda(n)=\infty$. Now, consider the function $\Gamma\co [0,\infty) \to [0,\infty)$ such that $\Gamma(S)=\frac{1}{2}$ and for any $x\in [S\cdot 2^{-n},S\cdot 2^{-n+1})$, $\Gamma(x)=2^{-\Lambda(n)}$. Clearly $\Gamma$ is non-decreasing, $\lim_{x\to 0}\Gamma(x)=0$. For any two  end points $(x_n),(y_n)\in end(X_n,\Phi_n)$ with $D((x_n),(y_n))=2^{-n_0}$ the distance between their correspondent subsequences $i(x_n)=(x_{\alpha(n)}),i(y_n)=(y_{\alpha(n)})$ is exactly $D'((x_{\alpha(n)}),(y_{\alpha(n)}))=\Gamma(2^{-n_0})=2^{-\Lambda(n_0)}$. Then $\Gamma$ is an extension function for $i\co end(X_n,\Phi_n) \to end((X_{\alpha(n)},\tilde{\Phi}_n))$ and $i$ is a uniform homeomorphism. A similar argument works for $i^{-1}$.
\end{proof}

\begin{prop}  Consider two complete ultrametric spaces $U_1,U_2$ and let $(X_n,\Phi_n)$, $(Y_n,\Psi_n)$ be their associated $D_-$--chains. Then, there is a uniformly continuous map $\Phi \co U_1\to U_2$ if and only if there is a morphism of $D_-$--chains $(f_n,\alpha,n_0) \co (X_n,\Phi_n)\to (Y_n,\Psi_n)$.
\end{prop}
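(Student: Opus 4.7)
The plan is to mirror the proofs already given for $D$--chains and $D_+$--chains in Propositions~3.?, with the difference that the index set is $\bn_{\geq n_0}$ and the chain maps go backwards, so I must carry the basepoint $n_0$ (determined by the domain $[0,S]$ of the expansion function) through every step.

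For the ``only if'' direction, let $\Phi\co U_1\to U_2$ be uniformly continuous, fix an expansion function $\varrho_\Phi\co [0,S]\to [0,\tfrac12]$ and let $n_0\in\bn$ with $2^{-n_0}\leq S<2^{-n_0+1}$ (or $n_0=1$). Set $\sigma:=\gamma_{\varrho_\Phi}\co \bn_{\geq n_0}\to\bn$ as in (\ref{gamma3}). Each $x_n\in X_n$ represents a ball $B(x_n)\subset U_1$ of radius $2^{-n}\leq S$, so by the ultrametric property $\mathrm{diam}(B(x_n))\leq 2^{-n}$ and, by definition of $\gamma_{\varrho_\Phi}$, $\mathrm{diam}(\Phi(B(x_n)))\leq \varrho_\Phi(2^{-n})\leq 2^{-\sigma(n)}$. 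Since $Y_{\sigma(n)}$ is the partition of $U_2$ in balls of radius $2^{-\sigma(n)}$, there is a unique $y_{\sigma(n)}\in Y_{\sigma(n)}$ with $\Phi(B(x_n))\subset B(y_{\sigma(n)})$; put $f_n(x_n):=y_{\sigma(n)}$. Commutativity of the diagram then follows from the chain of inclusions: since $B(x_{n+1})\subset B(\Phi_n(x_{n+1}))=B(x_n)$, the ball $B(f_{n+1}(x_{n+1}))$ of radius $2^{-\sigma(n+1)}$ and the ball $B(f_n(\Phi_n(x_{n+1})))$ of radius $2^{-\sigma(n)}$ both contain $\Phi(B(x_{n+1}))$, so the second is the image of the first under $\Psi_{\sigma(n)}\circ\cdots\circ\Psi_{\sigma(n+1)-1}$.

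For the ``if'' direction, given a morphism $(f_n,\sigma,n_0)$ I construct a uniformly continuous map between end spaces and use Proposition~\ref{end3} to transport it back to $U_1,U_2$. Define $\Phi\co \mathrm{end}(X_n,\Phi_n)\to \mathrm{end}(Y_n,\Psi_n)$ by sending $(x_n)$ to the sequence $(y_m)_{m\in\bn}$ where, for any $n\geq n_0$ with $\sigma(n)\geq m$, we set $y_m:=\Psi_m\circ\cdots\circ\Psi_{\sigma(n)-1}(f_n(x_n))$. This is well defined and independent of $n$ by commutativity of the morphism diagram, and $\lim_{n\to\infty}\sigma(n)=\infty$ guarantees that every coordinate $y_m$ is defined. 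For uniform continuity: if $D((x_n),(y_n))\leq 2^{-n}$ with $n\geq n_0$, then $x_n=y_n$ in $X_n$, so $f_n(x_n)=f_n(y_n)$, whence the images agree in every coordinate $m\leq \sigma(n)$ and $D'(\Phi(x),\Phi(y))\leq 2^{-\sigma(n)}$. Letting $\varrho_\Phi(t)=2^{-\sigma(n)}$ on $(2^{-(n+1)},2^{-n}]$ for $n\geq n_0$ and extending by $0$ at the origin gives a valid expansion function with the right limit behaviour.

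The only real obstacle is bookkeeping the basepoint $n_0$ and the possibility that $\sigma$ is merely non-decreasing rather than strictly increasing; but the construction above never requires strict monotonicity, only that $\sigma(n)\to\infty$. If cleaner indexing is needed, one can pass to a subchain via Lemma~\ref{telescope3} so that $\sigma$ becomes strictly increasing. Combining the induced map between end spaces with the uniform homeomorphisms $U_i\cong \mathrm{end}(\cdot)$ from Proposition~\ref{end3} then yields a uniformly continuous $\Phi\co U_1\to U_2$, completing the proof.
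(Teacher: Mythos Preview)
Your proof is correct and follows essentially the same route as the paper: for the ``only if'' direction you set $\sigma=\gamma_{\varrho_\Phi}$ and let $\Phi$ canonically induce the level maps $f_n$, and for the ``if'' direction you build a uniformly continuous map between the end spaces and then invoke Proposition~\ref{end3} (with Lemma~\ref{telescope3} in reserve) to transport it back to $U_1,U_2$. The only difference is that you spell out explicitly the well-definedness of $f_n$, the commutativity of the square, and the expansion function for the induced map, whereas the paper leaves these as ``canonically induces'' and ``immediate to check''.
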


\begin{proof} If there is a uniformly continuous map $\Phi:U_1\to U_2$, consider the map $\gamma_{\varrho_\Phi}\co \bn \to \bn$ from (\ref{gamma3}). Making $\sigma=\gamma_{\varrho_\Phi}$, $\Phi$ canonically induces the maps $f_n\co X_n\to Y_{\sigma(n)}$ $\forall n\geq n_0$ and the diagram commutes.

The morphism $(f_n,\alpha,n_0)$ induces a map $F\co end(X_n,\Phi_n)\to end(Y_n,\Psi_n)$ where $F((x_n))$ is the unique sequence $(y_n)\in end(Y_n,\Psi_n)$ such that $y_{\sigma(n)}=f_n(x_n) \ \forall n\geq n_0$. It is immediate to check that this map is uniformly continuous, and from Lemma \ref{telescope3} together with Proposition \ref{end3}, it follows that there is a uniformly continuous map $\Phi:U_1\to U_2$.
\end{proof}

\begin{lema}\label{alpha3} If $\Phi \co X\to Y$ is a uniform homeomorphism then there are increasing maps
$\alpha(\Phi),\beta(\Phi^{-1}) \co \bn \to \bn$ such that $\gamma_{\varrho_{\Phi^{-1}}}(\beta(i))\leq \alpha(i)$, $\gamma_{\varrho_\Phi}(\alpha(i+1))\leq \beta(i)$ for every $i\in \bn$.
\end{lema}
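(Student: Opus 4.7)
The plan is to parallel the forward induction used in Lemma~\ref{alpha2}. The only substantially new ingredient is that in Section~5 the expansion functions of $\Phi$ and $\Phi^{-1}$ live on bounded intervals $[0,S_\Phi],[0,S_{\Phi^{-1}}]\subseteq[0,\tfrac12]$, so the functions $\gamma_{\varrho_\Phi}$ and $\gamma_{\varrho_{\Phi^{-1}}}$ are defined only on the tails $\bn_{\ge n_0^\Phi}$ and $\bn_{\ge n_0^{\Phi^{-1}}}$. Setting $n_0:=\max\{n_0^\Phi,n_0^{\Phi^{-1}}\}$, the construction will return maps $\alpha,\beta\co\bn\to\bn_{\ge n_0}$, so that every invocation of either $\gamma$ is legal.

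For the base case I take $\alpha(1):=n_0$ and pick $\beta(1)\ge n_0$ large enough that the first instance of the linking inequality $\gamma_{\varrho_{\Phi^{-1}}}(\beta(1))\le\alpha(1)$ is compatible with being in the domain; this is possible because $\gamma_{\varrho_{\Phi^{-1}}}$ is non-decreasing and by the uniform continuity of $\Phi^{-1}$ one has $\gamma_{\varrho_{\Phi^{-1}}}(n)\to\infty$ as $n\to\infty$. For the inductive step, suppose $\alpha(i),\beta(i)$ are defined; I then put
\[
\alpha(i+1):=\max\bigl\{\alpha(i)+1,\,\mu_i\bigr\},\qquad
\beta(i+1):=\max\bigl\{\beta(i)+1,\,\nu_{i+1}\bigr\},
\]
where $\mu_i\in\bn_{\ge n_0^\Phi}$ and $\nu_{i+1}\in\bn_{\ge n_0^{\Phi^{-1}}}$ are the least integers forcing $\gamma_{\varrho_\Phi}(\alpha(i+1))\le\beta(i)$ and $\gamma_{\varrho_{\Phi^{-1}}}(\beta(i+1))\le\alpha(i+1)$, respectively, at stage $i+1$. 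These least integers exist as finite natural numbers precisely because the $\gamma$-functions are non-decreasing and diverge to $\infty$; the $\max$ with $\alpha(i)+1$ and $\beta(i)+1$ forces strict monotonicity of both sequences.

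The hard part will be the bookkeeping at the base step: one must choose $\alpha(1)$ and $\beta(1)$ simultaneously so that they both lie in the common tail $\bn_{\ge n_0}$ and the first instance of the linking inequality is satisfied. Once this is arranged, the inductive step propagates the two prescribed inequalities mechanically, exactly as in the proof of Lemma~\ref{alpha2}; the divergence of $\gamma_{\varrho_\Phi}$ and $\gamma_{\varrho_{\Phi^{-1}}}$ ensures that the procedure never stalls, and the $\max$ with the preceding value plus one ensures both $\alpha$ and $\beta$ are strictly increasing maps $\bn\to\bn$.
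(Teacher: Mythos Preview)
Your forward-induction scheme is the paper's approach, but there is a direction error that makes the argument, as written, internally inconsistent. You aim for $\gamma_{\varrho_{\Phi^{-1}}}(\beta(i))\le\alpha(i)$ and $\gamma_{\varrho_\Phi}(\alpha(i+1))\le\beta(i)$, yet justify the choice of $\beta(1)$ ``large enough'' by the fact that $\gamma_{\varrho_{\Phi^{-1}}}$ is non-decreasing with $\gamma_{\varrho_{\Phi^{-1}}}(n)\to\infty$; that drives $\gamma_{\varrho_{\Phi^{-1}}}(\beta(1))$ \emph{up}, not down, so the stated inequality cannot be arranged this way. In fact with the $\le$ direction the claim is false already for $\Phi=\mathrm{id}$: then both $\gamma$'s are the identity and the two inequalities force $\alpha(i+1)\le\beta(i)\le\alpha(i)$, contradicting strict increase of $\alpha$.

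What has happened is that the lemma statement carries a sign typo: the paper's own proof---and its use in Proposition~\ref{carac3}, where one needs $\Phi^{-1}$ to carry balls of radius $2^{-\beta(i)}$ into sets of diameter $\le 2^{-\alpha(i)}$---actually establish the reversed inequalities $\gamma_{\varrho_{\Phi^{-1}}}(\beta(i))\ge\alpha(i)$ and $\gamma_{\varrho_\Phi}(\alpha(i+1))\ge\beta(i)$. With that correction your divergence argument becomes the right justification and your induction coincides with the paper's: it sets $\alpha(1)=1$, $\beta(1)=n_{\alpha(1)}$, and then $\alpha(i)=\max\{\alpha(i-1)+1,\,n_{\beta(i-1)}\}$, $\beta(i)=\max\{\beta(i-1)+1,\,n_{\alpha(i)}\}$, where $n_{\alpha(i)}$ (resp.\ $n_{\beta(i)}$) is the threshold beyond which $\gamma_{\varrho_{\Phi^{-1}}}\ge\alpha(i)$ (resp.\ $\gamma_{\varrho_\Phi}\ge\beta(i)$). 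Your explicit bookkeeping with $n_0=\max\{n_0^{\Phi},n_0^{\Phi^{-1}}\}$ to keep all evaluations of $\gamma$ in their domains is a point the paper glosses over.
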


\begin{proof} First, let $\alpha(1)=1$. 

Since $lim_{n\to \infty}\gamma_{\Phi^{-1}}(n)=\infty$ for any $\alpha(i)$ there exist some $n_{\alpha(i)}>0$ such that for any $n\geq n_{\alpha(i+1)}$, $\gamma_{\Phi^{-1}}(n)\geq \alpha(i)$.

Let $\beta(1)=n_{\alpha(1)}$.

Since $lim_{n\to \infty}\gamma_\Phi(n)=\infty$ for any $\beta(i)$ there exist some $n_{\beta(i)}$  such that for any $n\geq n_{\beta(i)}$, $\gamma_{\Phi}(n)\geq \beta(i)$.

If we have defined $\alpha(i-1),\beta(i-1)$, then, it suffices make $\alpha(i)=\max\{\alpha(i-1)+1,n_{\beta(i-1)}\}$ and $\beta(i)= \max\{\beta(i-1)+1,n_{\alpha(i)}\}$.
\end{proof}

\begin{prop}\label{carac3} If  $(X_n,\Phi_n),(Y_n,\Psi_n)$ are two $D_-$--chains, then $(X_n,\Phi_n)\sim_{z-z}(Y_n,\Psi_n)$ if and only if $end(T,v), end(T',w)$ are uniform homeomorphic.
\end{prop}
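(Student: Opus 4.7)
The plan is to mirror the structure of the proofs of Propositions \ref{carac1} and \ref{Asym}, adapting the arguments to the reversed arrows of a $D_-$--chain and to the fact that the relevant expansion functions are defined on a finite interval $[0,S]$ rather than on $[0,\infty)$.

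For the easy direction, suppose $(Z_n,\mathcal{V}_n)$ is a common zig-zag $D_-$--chain of $(X_n,\Phi_n)$ and $(Y_n,\Psi_n)$ with indexing maps $\alpha,\beta\co\bn\to\bn$. Then both $(X_{\alpha(n)},\tilde\Phi_n)$ and $(Y_{\beta(n)},\tilde\Psi_n)$ are subchains of $(Z_n,\mathcal{V}_n)$ (up to re-indexing by taking odd/even terms), so by Lemma \ref{telescope3} their end spaces are uniformly homeomorphic to $end(Z_n,\mathcal{V}_n)$, and again by Lemma \ref{telescope3}, $end(X_n,\Phi_n)$ and $end(Y_n,\Psi_n)$ are uniformly homeomorphic to the respective subchain end spaces. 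Composing these homeomorphisms yields the desired uniform homeomorphism between $end(X_n,\Phi_n)$ and $end(Y_n,\Psi_n)$.

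For the converse, suppose $\Phi\co end(X_n,\Phi_n)\to end(Y_n,\Psi_n)$ is a uniform homeomorphism. Apply Lemma \ref{alpha3} to obtain increasing $\alpha,\beta\co\bn\to\bn$ satisfying $\gamma_{\varrho_{\Phi^{-1}}}(\beta(i))\leq \alpha(i)$ and $\gamma_{\varrho_\Phi}(\alpha(i+1))\leq \beta(i)$. The inequality $\gamma_{\varrho_\Phi}(\alpha(i+1))\leq \beta(i)$ says that for points at distance $\leq 2^{-\alpha(i+1)}$ in $end(X_n,\Phi_n)$, their images under $\Phi$ sit at distance $\leq 2^{-\beta(i)}$ in $end(Y_n,\Psi_n)$. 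Hence $\Phi$ sends any ball of radius $2^{-\alpha(i+1)}$ in $X$ into a unique ball of radius $2^{-\beta(i)}$ in $Y$, inducing a canonical surjective map $\mathcal{V}_{2i}\co X_{\alpha(i+1)}\to Y_{\beta(i)}$ (which here goes ``downwards'' in the $D_-$ diagram). Symmetrically, $\Phi^{-1}$ induces a canonical surjection $\mathcal{V}_{2i-1}\co Y_{\beta(i)}\to X_{\alpha(i)}$.

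Setting $Z_{2i-1}=X_{\alpha(i)}$ and $Z_{2i}=Y_{\beta(i)}$, I claim $(Z_n,\mathcal{V}_n)$ is a common zig-zag $D_-$--chain. Commutativity of the zig-zag diagram reduces to verifying that two consecutive $\mathcal{V}$'s compose to the map induced by inclusion of balls at the appropriate scales: given $x\in X_{\alpha(i+1)}$, the ball represented by $\mathcal{V}_{2i-1}\circ\mathcal{V}_{2i}(x)$ in $X_{\alpha(i)}$ must contain the ball represented by $x$. This follows because $\Phi$ is a bijection on end spaces, so mapping forward by $\Phi$ and then back by $\Phi^{-1}$ sends an end into the same end, and thus the induced coarsened maps between partitions must land inside the inclusion-induced coarser ball; the analogous statement for $\mathcal{V}_{2i}\circ\mathcal{V}_{2i-1}$ holds by the same reasoning.

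The main obstacle I expect is bookkeeping around the reversed orientation of $D_-$--chains and the fact that the expansion function is defined only on $[0,S]$, which forces $\gamma_{\varrho_\Phi}$ to have a non-trivial starting index $n_0$; this means the initial values $\alpha(1),\beta(1)$ must be chosen large enough (as Lemma \ref{alpha3} arranges) so that all the relevant inequalities and induced maps make sense. Beyond this indexing care, the structural argument is entirely parallel to Propositions \ref{carac1} and \ref{Asym}.
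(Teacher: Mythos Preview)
Your proposal is correct and follows essentially the same approach as the paper's own proof: both directions invoke Lemma~\ref{telescope3} for the easy implication, and for the converse both apply Lemma~\ref{alpha3} to obtain $\alpha,\beta$, then use $\Phi$ and $\Phi^{-1}$ to induce the zig-zag maps $\mathcal{V}_{2i}\co X_{\alpha(i+1)}\to Y_{\beta(i)}$ and $\mathcal{V}_{2i-1}\co Y_{\beta(i)}\to X_{\alpha(i)}$, checking that consecutive compositions agree with the inclusion-induced maps because $\Phi$ is a bijection on ends. Your write-up in fact supplies a bit more justification for this last commutativity step than the paper does, and you assign $\Phi$ and $\Phi^{-1}$ to the correct $\mathcal{V}$'s (the paper's sentence swaps them), but the argument is otherwise identical.
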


\begin{proof} If there is a common zig-zag $D_-$--chain, the existence of an asymorphism follows immediately from Lemma \ref{telescope3}.

Now, suppose there is a uniform homeomorphism $\Phi \co end(X_n,\Phi_n)\to end(Y_n,\Psi_n)$. Then, by Lemma \ref{alpha3},  there are increasing maps
$\alpha(\Phi),\beta(\Phi^{-1}) \co \bn \to \bn$ such that $\gamma_{\varrho_{\Phi^{-1}}}(\beta(i))\leq \alpha(i)$, $\gamma_{\varrho_\Phi}(\alpha(i+1))\leq \beta(i)$ for every $i\in \bn$.

Therefore, $\Phi$ and $\Phi^{-1}$ induce respectively unique surjective maps $\mathcal{V}_{2i-1}\co Y_{\beta(i)} \to X_{\alpha(i)}$ and $\mathcal{V}_{2i}\co  X_{\alpha(i+1)} \to Y_{\beta(i)}$.  $\mathcal{V}_{k+1} \circ \mathcal{V}_{k}$ coincides with the map induced by the inclusion, and hence, making $Z_{2i-1}=X_{\alpha(i)}$ and $Z_{2i}=Y_{\beta(i)}$, $(Z_n,\mathcal{V}_n)$ is a common zig-zag $D_-$--chain of $(X_n,\Phi_n),(Y_n,\Psi_n)$.
\end{proof}

From this, and Proposition \ref{end3}, it follows:

\begin{teorema} Two complete ultrametric spaces $U_1,U_2$ are uniformly homeomorphic if and only if there is a common zig-zag $D_-$--chain between their associated $D_-$--chains. \hfill$\blacksquare$
\end{teorema}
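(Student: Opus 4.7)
The plan is to reduce the theorem to a composition of two earlier results already established in the section, namely Proposition \ref{end3} and Proposition \ref{carac3}. The same pattern is used in Theorems \ref{caract1} and the coarse analogue, so this is purely an assembly step.

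First I would set up the forward direction. Suppose $U_1$ and $U_2$ are uniformly homeomorphic via a map $h\co U_1\to U_2$. Let $(X_n,\Phi_n)$ and $(Y_n,\Psi_n)$ be the $D_-$--chains associated to $U_1$ and $U_2$. By Proposition \ref{end3}, there are uniform homeomorphisms $i_1\co U_1\to end(X_n,\Phi_n)$ and $i_2\co U_2\to end(Y_n,\Psi_n)$. Since the composition of uniformly continuous maps is uniformly continuous (and likewise for inverses), the map $i_2\circ h\circ i_1^{-1}\co end(X_n,\Phi_n)\to end(Y_n,\Psi_n)$ is a uniform homeomorphism. Then Proposition \ref{carac3} applies directly to produce a common zig-zag $D_-$--chain of $(X_n,\Phi_n)$ and $(Y_n,\Psi_n)$.

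For the converse, I would reverse the same chain of implications. Starting from a common zig-zag $D_-$--chain of $(X_n,\Phi_n)$ and $(Y_n,\Psi_n)$, Proposition \ref{carac3} yields a uniform homeomorphism $F\co end(X_n,\Phi_n)\to end(Y_n,\Psi_n)$. Conjugating by the uniform homeomorphisms $i_1,i_2$ of Proposition \ref{end3} gives $i_2^{-1}\circ F\circ i_1\co U_1\to U_2$, which is a uniform homeomorphism.

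Since both directions are just a matter of pre- and post-composing with the bi-Lipschitz-type identifications of Proposition \ref{end3}, there is no real obstacle: the only thing to verify is that uniform homeomorphisms form a groupoid, i.e.\ that they are closed under composition and inversion, which is standard and already implicit in the constructions of this section. No new calculations with expansion functions $\varrho_\Phi$ or with the functions $\gamma_{\varrho_\Phi}$ from (\ref{gamma3}) are needed at this stage, because all of that work has already been absorbed into Lemma \ref{alpha3} and Proposition \ref{carac3}. The entire proof therefore reduces to a two-line citation, exactly parallel to Theorem \ref{caract1}.
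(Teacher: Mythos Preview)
Your proposal is correct and follows exactly the paper's own approach: the theorem is stated immediately after Proposition~\ref{carac3} with the remark ``From this, and Proposition~\ref{end3}, it follows,'' and is marked with a $\blacksquare$ without further argument. Your write-up simply spells out the two-line citation the author leaves implicit.
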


\begin{definicion} A function between metric spaces $f\co X\to Y$ is \emph{small scale bi-Lipschitz} if there is a constant $K>0$ and a real number $\epsilon >0$ such that for any pair of points $x,x'\in X$ with $d(x,x')<\epsilon$, $\frac{1}{K} \cdot d_X(x,x') \leq d_y(f(x),f(x')) \leq K\cdot d_X(x,x')$. In there is such a map, we say that $X,Y$ are \emph{small scale bi-Lipschitz equivalent}.
\end{definicion}

Let $(X_n,\Phi_n),(Y_n,\Psi_n)$ be two $D_-$--chains and $(Z_n,\mathcal{V}_n)$ a common zig-zag $D_-$--chain with increasing maps $\alpha,\beta\co \bn \to \bn$ such that $Z_{2n-1}=X_{\alpha(n)}$ and $Z_{2n}=Y_{\beta(n)}$. Let us define $$f_Z\co  end(X_{\alpha(n)},\tilde{\Phi}_n) \to end(Y_{\beta(n)},\tilde{\Psi}_n)$$ such that for any end point $(x_{\alpha(n)})\in end(X_{\alpha(n)},\tilde{\Phi}_n)$, $f_Z((x_{\alpha(n)}))$ is the unique end point $(y_{\beta(n)})\in end(Y_n,\Psi_n)$ such that $\mathcal{V}_{2n-2}(x_{\alpha(n)})=y_{\beta(k-1)}$ $\forall n\geq 2$.

\begin{prop} Given two $D_-$--chains $(X_n,\Phi_n),(Y_n,\Psi_n)$, then $end(X_n,\Phi_n)$ and $end(Y_n,\Psi_n)$ are uniformly homeomorphic if and only if there are increasing sequences $\alpha,\beta\co \bn\to \bn$ such that $end(X_{\alpha(n)},\tilde{\Phi}_n)$ and $end(Y_{\beta(n)},\tilde{\Psi}_n)$ are small scale bi-Lipschitz equivalent.
\end{prop}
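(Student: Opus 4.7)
My plan is to parallel Proposition~\ref{lipschitz} from the $D$--chain section, substituting Lemma~\ref{telescope3} and Proposition~\ref{carac3} for their $D$--chain counterparts.

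For the \emph{if} direction, given increasing sequences $\alpha,\beta\co\bn\to\bn$ and a small scale bi-Lipschitz equivalence $F\co end(X_{\alpha(n)},\tilde{\Phi}_n)\to end(Y_{\beta(n)},\tilde{\Psi}_n)$, I note that both $F$ and $F^{-1}$ are uniformly continuous: if the bi-Lipschitz constants $K,\epsilon$ control distortion below the threshold $\epsilon$, then $\delta=\min\{\epsilon,\eta/K\}$ witnesses $(\eta,\delta)$-uniform continuity. Hence $F$ is a uniform homeomorphism, and composing with the uniform homeomorphisms $end(X_n,\Phi_n)\to end(X_{\alpha(n)},\tilde{\Phi}_n)$ and $end(Y_{\beta(n)},\tilde{\Psi}_n)\to end(Y_n,\Psi_n)$ supplied by Lemma~\ref{telescope3} yields the desired equivalence.

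For the \emph{only if} direction, starting from a uniform homeomorphism $\Phi\co end(X_n,\Phi_n)\to end(Y_n,\Psi_n)$, I would invoke Proposition~\ref{carac3} to obtain a common zig-zag $D_-$--chain $(Z_n,\mathcal{V}_n)$ with increasing sequences $\alpha,\beta$, $Z_{2n-1}=X_{\alpha(n)}$, $Z_{2n}=Y_{\beta(n)}$. My candidate is the canonical map $f_Z$ introduced just above this proposition, and I claim the stronger statement that $f_Z$ is globally bi-Lipschitz with constant $2$. Writing $x=(x_{\alpha(n)})$, $x'=(x'_{\alpha(n)})$ and $f_Z(x)=(y_{\beta(n)})$, $f_Z(x')=(y'_{\beta(n)})$ with $y_{\beta(n)}=\mathcal{V}_{2n}(x_{\alpha(n+1)})$, two facts suffice: (i) $x_{\alpha(n+1)}=x'_{\alpha(n+1)}$ gives $y_{\beta(n)}=y'_{\beta(n)}$ directly from the definition; (ii) commutativity of the zig-zag yields $\mathcal{V}_{2n-1}\circ\mathcal{V}_{2n}=\tilde{\Phi}_n$, so $\mathcal{V}_{2n-1}(y_{\beta(n)})=x_{\alpha(n)}$, and therefore $y_{\beta(n)}=y'_{\beta(n)}$ forces $x_{\alpha(n)}=x'_{\alpha(n)}$.

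Letting $n_0,m_0$ denote the maximal indices of agreement for $x,x'$ and for $f_Z(x),f_Z(x')$ respectively (with the convention $n_0=0$ or $m_0=0$ indicating no agreement, so that $D(x,x')=2^{-n_0}$ and $D(f_Z(x),f_Z(x'))=2^{-m_0}$ throughout), facts (i) and (ii) translate to $n_0-1\leq m_0\leq n_0$, yielding $D(x,x')\leq D(f_Z(x),f_Z(x'))\leq 2\,D(x,x')$. The main obstacle I anticipate is the bookkeeping in the degenerate cases: $n_0=0$ must force $m_0=0$ (else $y_{\beta(1)}=y'_{\beta(1)}$ would pull back via (ii) to $x_{\alpha(1)}=x'_{\alpha(1)}$), and $n_0\geq 2$ rules out $m_0=0$ via (i); the only remaining configurations are $n_0=m_0=0$ and $n_0=1,m_0=0$, both of which satisfy the claimed inequalities by direct verification. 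Past this case check, the argument is a direct translation of Proposition~\ref{lipschitz}.
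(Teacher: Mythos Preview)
Your proof is correct and follows the same architecture as the paper: Lemma~\ref{telescope3} for the \emph{if} direction and Proposition~\ref{carac3} plus the canonical map $f_Z$ for the \emph{only if} direction. The one substantive difference is that the paper only asserts the bi-Lipschitz inequality for pairs with $D_{T(\alpha)}(x,x')\leq \tfrac14$ (i.e.\ $n_0\geq 2$), declaring $f_Z$ merely \emph{small scale} bi-Lipschitz, whereas you push through the edge cases $n_0\in\{0,1\}$ to obtain a global bi-Lipschitz bound with constant~$2$. Your case analysis is sound: fact~(ii) forces $m_0\leq n_0$ always, and fact~(i) gives $m_0\geq n_0-1$ once $n_0\geq 1$, while for $n_0=0$ fact~(ii) applied at $n=1$ rules out $m_0\geq 1$. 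So you recover $n_0-1\leq m_0\leq n_0$ unconditionally, hence $D(x,x')\leq D(f_Z(x),f_Z(x'))\leq 2D(x,x')$ globally. This is a mild sharpening of the paper's statement, not a different method; the paper's restriction to $D\leq\tfrac14$ is simply conservative.
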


\begin{proof} If $end(X_{\alpha(n)},\tilde{\Phi}_n)$ and $end(Y_{\beta(n)},\tilde{\Psi}_n)$ are small scale bi-Lipschitz equivalent, then they are, in particular uniformly homeomorphic and so they are $end(X_n,\Phi_n)$ and $end(Y_n,\Psi_n)$ by Lemma \ref{telescope3}.  

If $end(X_{\alpha(n)},\tilde{\Phi}_n)$ and $end(Y_{\beta(n)},\tilde{\Psi}_n)$ are uniformly homeomorphic then, by Proposition \ref{carac3}, there is a common zig-zag $D_-$--chain given by sequences $\alpha,\beta$. Then, the map $f_Z\co end(X_{\alpha(n)},\tilde{\Phi}_n) \to end(Y_{\beta(n)},\tilde{\Psi}_n)$ described above, for any pair of end points $(x_{\alpha(n)}),  (x'_{\alpha(n)})\in  end(X_{\alpha(n)},\tilde{\Phi}_n)$ with  $D_{T(\alpha)}((x_{\alpha(n)}),(x'_{\alpha(n)})) \leq \frac{1}{4}$, holds that
$$D_{T(\alpha)}((x_{\alpha(n)}),(x'_{\alpha(n)})) \leq  D_{T(\beta)}(f_Z((x_{\alpha(n)})),f_Z((x'_{\alpha(n)}))) \leq 2\cdot D_{T(\alpha)}((x_{\alpha(n)}),(x'_{\alpha(n)}))$$ and it is small scale bi-Lipschitz.
\end{proof}

\begin{cor} Two ultrametric spaces $U_1,U_2$ are uniformly homemorphic if and only if there are increasing sequences $\alpha,\beta\co \bn\to \bn$ such that $end(X_{\alpha(n)},\tilde{\Phi}_n)$ and $end(Y_{\beta(n)},\tilde{\Psi}_n)$ are small scale bi-Lipschitz equivalent. \hfill$\blacksquare$
\end{cor}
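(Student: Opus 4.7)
The plan is to obtain this statement as a direct transitive consequence of the immediately preceding proposition together with Proposition \ref{end3}, with uniform homeomorphism serving as the common equivalence relation connecting a space to the end space of its associated $D_-$--chain.

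First, I would set up notation: let $(X_n,\Phi_n)$ and $(Y_n,\Psi_n)$ denote the $D_-$--chains associated to $U_1$ and $U_2$ respectively, obtained by partitioning into balls of radius $2^{-n}$ (passing to completions if $U_1$, $U_2$ are not already complete, so that Proposition \ref{end3} applies and produces uniform homeomorphisms $U_i \to end(X_n^{(i)},\Phi_n^{(i)})$).

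For the forward direction, assume $U_1$ and $U_2$ are uniformly homeomorphic. By Proposition \ref{end3}, $U_1$ is uniformly homeomorphic to $end(X_n,\Phi_n)$ and $U_2$ is uniformly homeomorphic to $end(Y_n,\Psi_n)$. Composing these three uniform homeomorphisms yields a uniform homeomorphism $end(X_n,\Phi_n) \to end(Y_n,\Psi_n)$. Applying the preceding proposition, we obtain increasing sequences $\alpha,\beta\co \bn \to \bn$ such that the subchain end spaces $end(X_{\alpha(n)},\tilde{\Phi}_n)$ and $end(Y_{\beta(n)},\tilde{\Psi}_n)$ are small scale bi-Lipschitz equivalent. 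For the converse, given such $\alpha,\beta$, the preceding proposition produces a uniform homeomorphism between $end(X_n,\Phi_n)$ and $end(Y_n,\Psi_n)$, which by Proposition \ref{end3} (composed at both ends) lifts to a uniform homeomorphism between $U_1$ and $U_2$.

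The proof is essentially bookkeeping with no serious obstacle: the only point that deserves mild care is ensuring that Proposition \ref{end3}, which requires completeness, is applied to the correct objects. Since uniform homeomorphism is a property preserved by the canonical identification of $U_i$ with its completion's end space (and the chains only depend on the ball structure, not completeness), no new ideas are needed beyond invoking the two previous results and using transitivity of uniform homeomorphism and of small scale bi-Lipschitz equivalence under composition with uniform homeomorphisms.
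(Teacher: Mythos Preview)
Your proposal is correct and matches the paper's intended approach: the corollary is marked with $\blacksquare$ and no proof, signaling that it follows immediately from the preceding proposition combined with Proposition~\ref{end3}, exactly as you outline. Your care about completeness is appropriate since the corollary drops the word ``complete'' while Proposition~\ref{end3} requires it; the paper tacitly works in $\mathcal{C}_3$ (complete ultrametric spaces), so this is a statement-level omission rather than a gap in your argument.
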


Let $(U,d)$ be an ultrametric space and $(n_i), i> 0$ an increasing sequence of numbers. Let us define $f_{(n_i)} \co [0,\infty) \to \bn$ a non-decreasing function such that for any $t\geq 2^{-n_1}$, $f(t)=2^{-1}$ and for any $t\in (2^{-n_{i+1}},2^{-n_i}]$ $f(t)=2^{-i}$ for every $i>1$. Let us denote simply by $(U,d(n_i))$ the ultrametric space $(U,d_{f_{(n_i)}})$ which depends only on the original ultrametric and the sequence $(n_i)$.

\begin{cor} Two ultrametric spaces $(U_1,d_1),(U_2,d_2)$ are uniformly homeomorphic if and only if there are increasing sequences of numbers $(n_i),(m_i)$ such that  
$(U_1,d_1(n_i))$ and $(U_2,d_2(m_i)$ are small scale bi-Lipschitz equivalent. \hfill$\blacksquare$
\end{cor}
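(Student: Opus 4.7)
The plan is to reduce this corollary to the one immediately preceding it by setting up a canonical bi-Lipschitz identification between the modified ultrametric $(U,d(n_i))$ and the end space of a suitable subchain. Once this identification is in place, the statement becomes essentially a translation between the ``end space of subchain'' formulation and the ``modified metric'' formulation, exactly paralleling what was done in the coarse setting to pass from Corollary \ref{cor1} to its $d(\gamma)$ reformulation.

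Concretely, I would first show that if $(X_k,\Phi_k)$ is the $D_-$--chain associated with a complete ultrametric space $(U,d)$ and $(n_i)$ is an increasing sequence, then $(U,d(n_i))$ is bi-Lipschitz equivalent (with constant at most $2$) to $end(X_{n_i},\tilde{\Phi}_i)$. The key observation is the one already used throughout the preceding sections: by the ultrametric property, two points $x,y\in U$ lie in the same ball of radius $2^{-n_i}$ -- equivalently, their images agree in the $i$-th coordinate of the subchain end -- if and only if $d(x,y)\leq 2^{-n_i}$. Therefore, when $d(x,y)\in(2^{-n_{i+1}},2^{-n_i}]$ the subchain end metric $D'$ assigns value $2^{-i}$, matching $f_{(n_i)}(d(x,y))=d(n_i)(x,y)$ exactly; and when $d(x,y)>2^{-n_1}$, the two values differ by a factor of at most $2$ (namely $1$ versus $2^{-1}$). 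This is the same argument as Proposition~\ref{end3}, applied to the subchain in place of the full chain.

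With this identification in hand the corollary follows by chaining equivalences. If $U_1$ and $U_2$ are uniformly homeomorphic, the preceding corollary yields increasing sequences $\alpha,\beta\co\bn\to\bn$ such that $end(X_{\alpha(i)},\tilde{\Phi}_i)$ and $end(Y_{\beta(i)},\tilde{\Psi}_i)$ are small scale bi-Lipschitz equivalent. Setting $n_i:=\alpha(i)$ and $m_i:=\beta(i)$ and pre- and post-composing with the bi-Lipschitz identifications constructed above (which, being bi-Lipschitz, preserve small scale bi-Lipschitz equivalence), we obtain a small scale bi-Lipschitz equivalence between $(U_1,d_1(n_i))$ and $(U_2,d_2(m_i))$. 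The converse is the same argument run in reverse.

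No serious obstacle arises: once the identification $(U,d(n_i))\simeq end(X_{n_i},\tilde{\Phi}_i)$ is made explicit, the proof is a bookkeeping exercise. The only detail to watch is the edge case for distances exceeding $2^{-n_1}$, where the identification is only $2$-bi-Lipschitz rather than isometric -- but this discrepancy occurs at ``large'' distances and so is irrelevant to the small scale bi-Lipschitz condition.
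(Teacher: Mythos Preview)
Your proposal is correct and follows exactly the route the paper intends: the corollary is stated without proof (marked $\blacksquare$) immediately after the subchain end-space corollary and the definition of $(U,d(n_i))$, so the intended argument is precisely the translation you outline --- identify $(U,d(n_i))$ with $end(X_{n_i},\tilde{\Phi}_i)$ up to a bi-Lipschitz constant and invoke the preceding corollary, in parallel with the passage from Corollary~\ref{cor1} to its $d(\gamma)$ reformulation in the all-scale section. Your treatment of the edge case $d(x,y)>2^{-n_1}$ is more careful than anything the paper makes explicit, but the approach is the same.
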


\section{Towers and admissible morphisms}

In \cite{BZ}, Taras Banakh and Ihor Zarichnyy give a classification of ultrametric spaces up to coarse geometry. They prove their results by induction on partially ordered sets called towers. The following definitions are stated as they appear in their paper. 

A partially ordered set $T$ is a \emph{tree} if $T$ has the smallest element and for every point $x\in T$ the lower cone $\downarrow x$ is well-ordered. By the \emph{lower cone} (resp. \emph{upper cone}) of a point $x$ of a partially ordered set $T$ we understand the set $\downarrow x=\{y\in T:y\leq x\}$ (resp. $\uparrow x=\{y\in T:y\geq x\}$). A subset $A$ will be called a \emph{lower} (resp. \emph{upper}) \emph{set} if $\downarrow a \subset A$ (resp. $\uparrow a \subset A$) for every $a\in A$. A partially ordered set $T$ is \emph{well-founded} if each subset $A\subset T$ has a minimal element $a\in A$. The minimality of $a$ means that each point $a'\in A$ with $a'\leq a$ is equal to $a$. By $\min T$ we shall denote the set of all minimal elements of $T$.

\begin{definicion} A partially ordered set $T$ is called a \emph{tower} if 
\begin{itemize}
\item[(1)] $T$ is well-founded;
\item[(2)] any two elements $x,y \in T$ have the smallest upper bound $\sup\{x,y\}$ in $T$;
\item[(3)] for any $x\in T$ the upper cone $\uparrow x$ is linearly ordered;
\item[(4)] for any point $a\in T$ there is a finite number $n=lev_T(a)$ such that for every minimal element $x\in \downarrow a$ of $T$ the order interval $[x,a]=\uparrow x \cap \downarrow a$ has cardinality $|[x,a]|=n$.
\end{itemize}
\end{definicion}

The function $lev_T\co T\to \bn, \ lev_T\co a \mapsto lev_T(a)$, from the last item is called the \emph{level function}. 

The level function $lev_T\co T\to \bn$ divides $T$ into the levels $L_i=lev_T^{-1}(i), \ i\in \bn$. The level $L_1=min T$ is called \emph{the base} of $T$ and denoted by $[T]$.

Each tower carries a canonic \emph{path metric} $d_T$ defined by the formula $$d_T(x,y)=2\cdot lev_T(sup(x,y))-(lev_T(x)+lev_T(y)) \mbox{ for } x,y\in T.$$
The path metric restricted to the base  $[T]$ of $T$ is an ultrametric.

Given a tower $T$ with levels $L_i$, we can define a $D_+$--chain $(L_i,\Phi_i)$ with $\Phi\co L_i\to L_{i+1}$ such that $\Phi(x_i)=x_{i+1}$ for any $x_i \leq x_{i+1}$.

\begin{prop}\label{ends} For any tower $T$ with levels $L_i$, $end(L_i,\Phi_i)$ is coarse equivalent to $[T]$.
\end{prop}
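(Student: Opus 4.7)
The plan is to exhibit a bijection $i \colon [T] \to end(L_n, \Phi_n)$ and verify that it is an asymorphism, after which Proposition \ref{coarse} immediately yields the desired coarse equivalence.

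First I would define $i$ using the tower structure. For $x \in [T] = L_1$, axioms (3) and (4) force $\uparrow x$ to be a linearly ordered chain in which the level function is strictly increasing and attains every value in $\bn$ (indeed, for any $a \in \uparrow x$ the interval $[x,a]$ has exactly $lev_T(a)$ elements, hence realizes levels $1,2,\dots,lev_T(a)$). Hence there is a unique $x_n \in L_n \cap \uparrow x$ for each $n$. Setting $i(x) = (x_n)_{n \in \bn}$ then produces a sequence with $x_1 = x$ and $\Phi_n(x_n) = x_{n+1}$, so $i(x) \in end(L_n, \Phi_n)$; the inverse simply reads off the first coordinate, which lies in $L_1 = [T]$, so $i$ is a bijection.

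The key step is to compare the two metrics. For distinct $x,y \in [T]$, set $k = lev_T(\sup(x,y))$. Since $lev_T(x) = lev_T(y) = 1$, the path metric on the base yields $d_T(x,y) = 2k - 2$. On the other hand, $x_n$ and $y_n$ both lie in the linearly ordered upper cone of $\sup(x,y)$ and first coincide at $n = k$, so $D(i(x),i(y)) = 2^k$. This gives the comparison
\[
D(i(x),i(y)) \;=\; 2 \cdot 2^{\,d_T(x,y)/2} \qquad (x \neq y),
\]
from which an explicit expansion function for $i$ (exponential, $\varrho_i(R) = 2 \cdot 2^{R/2}$) and for $i^{-1}$ (logarithmic, vanishing on $[0,2)$ and equal to $2\log_2 R - 2$ on $[2,\infty)$) can be written down. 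Both are non-decreasing, so $i$ is an asymorphism and Proposition \ref{coarse} concludes the proof.

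The main point worth emphasizing, rather than an obstacle, is that the relation $D \circ i \sim 2 \cdot 2^{d_T/2}$ is exponential, so $i$ is very far from being a uniform equivalence. This is exactly why the statement is naturally phrased in the coarse category $\mathcal{C}_2$: bornologous maps are allowed to grow arbitrarily fast, and the path metric and the $2^{\text{level}}$ end-space metric differ precisely by such an admissible change of scale.
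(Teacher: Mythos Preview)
Your argument is correct and is essentially the same as the paper's: both identify $[T]$ with $end(L_n,\Phi_n)$ via the upper-cone bijection and observe that $D=2^{n}$ exactly when $d_T$ is (an affine function of) $2n$, which immediately yields mutually bornologous expansion functions. The paper compresses this into a single line, while you spell out the bijection and the explicit exponential/logarithmic expansion functions, but the underlying idea is identical.
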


\begin{proof} This is readily seen since $D(x,y)=2^n$ if and only if $d_T(x,y)=2n$.  
\end{proof}

For every point $x\in T$ of a tower $T$, the set $L_i\cap \downarrow x$ with $i=lev_T(x)-1$ is denoted $pred(x)$ and it is called the set of \emph{parents of $x$}. 

\begin{definicion} Let $T_1,T_2$ be two towers. A map $\phi\co A \to T_2$ defined on a lower subset $A=\downarrow A$ of $T_1$ is called an \emph{admissible morphism} if
\begin{itemize}\item[(1)] $lev(\phi(a))=lev(a)$ for all $a\in A$;
\item[(2)] $a\leq a'$ in $A$ implies $\phi(a)\leq \phi(a')$;
\item[(3)] $\phi(a)=\phi(a')$ for $a,a'\in A$ implies that $a,a'\in pred(v)$ for some $v\in T_1$;
\item[(4)] $\phi(A)$ is a lower subset of $T_2$;
\item[(5)] $ |\phi(max \, A)|\leq 1$, where $max \, A$ stands for the (possibly empty) set of maximal elements of the domain $A$.
\end{itemize}
\end{definicion}

As we mentioned in the introduction, see \ref{Admissible2}, Lemma 2 in \cite{BZ} states that the restriction to the base of an admissible morphism between towers is an asymorphism. Using $D_+$--chains we proof that this is in fact an if and only if condition.

Consider two towers $T,T'$ and their corresponding $D_+$--chains $(X_n,\Phi_n),(Y_n,\Psi_n)$. Let $(Z_n,\mathcal{V}_n)$ be a common zig-zag $D_+$--chain for $(X_n,\Phi_n),(Y_n,\Psi_n)$ with increasing maps $\alpha ,\beta\co \bn\to \bn$ such that $Z_{2n-1}=X_{\alpha(n)}$ and $Z_{2n}=X_{\beta(n)}$. $(X_{\alpha(n)},\tilde{\Phi}_n),(Y_{\beta(n)},\tilde{\Psi}_n)$ define subchains. Let $T(\alpha),T'(\beta)$ be the corresponding subtowers of $T,T'$ defined respectively by levels $\alpha(n)$ and $\beta(n)$, $n\in \bn$ and let $T(\alpha(n))$ denote $lev_{T(\alpha)}^{-1}(n)$ (its $n^{th}$ level).

Let $f_Z\co  T(\alpha) \to T'(\beta)$ be such that for every $n$, $f_Z|_{T(\alpha(n))}=\mathcal{V}_{2n-1}\co T(\alpha(n)) \to T'(\beta(n))$.

It is immediate to check the following:

\begin{prop}\label{Admissible1} Given a common zig-zag $D_+$--chain $(Z_n,r_n)$ for two towers $T,T'$, $f_Z$ is an admissible map. \hfill$\blacksquare$
\end{prop}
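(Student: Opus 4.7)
The plan is to verify in turn the five conditions defining an admissible morphism, viewing $f_Z\co T(\alpha)\to T'(\beta)$ as a map between the subtowers (each of which inherits its order and level function from its parent tower, so that the new $n$-th level is the old $\alpha(n)$-th, respectively $\beta(n)$-th, level). The algebraic input is the commutativity of the zig-zag diagram, which asserts that $\mathcal{V}_{2n}\circ\mathcal{V}_{2n-1}=\tilde{\Phi}_n$ and $\mathcal{V}_{2n+1}\circ\mathcal{V}_{2n}=\tilde{\Psi}_n$ for every $n$. In tower language these identities say that the parent map on the top row factors through the bottom row, and vice versa.

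Conditions (1), (4) and (5) follow immediately from the construction. For (1), $f_Z|_{T(\alpha(n))}=\mathcal{V}_{2n-1}$ lands in $T'(\beta(n))$, which is the $n$-th level of $T'(\beta)$. For (5), the subtower $T(\alpha)$ extends upward indefinitely (its level set is all of $\bn$), so $\max T(\alpha)=\emptyset$ and the condition is vacuous. For (4), every $\mathcal{V}_{2n-1}$ is surjective by definition of a $D_+$--chain, so $f_Z(T(\alpha))=T'(\beta)$ is trivially a lower subset of itself.

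The remaining conditions (2) and (3) come from chasing the commutativity one step at a time. For (3), if $f_Z(a)=f_Z(a')$, then by (1) both $a,a'$ lie in the same level $T(\alpha(n))$, and applying $\mathcal{V}_{2n}$ to $\mathcal{V}_{2n-1}(a)=\mathcal{V}_{2n-1}(a')$ yields $\tilde{\Phi}_n(a)=\tilde{\Phi}_n(a')$, so $a,a'$ are both predecessors of this common parent $v\in T(\alpha)$. For (2), if $a\leq a'$ in $T(\alpha)$ with $a\in T(\alpha(n))$ and $a'\in T(\alpha(m))$, $n\leq m$, then $a'=\tilde{\Phi}_{m-1}\circ\cdots\circ\tilde{\Phi}_n(a)$, and iterating the derived identity $\mathcal{V}_{2k+1}\circ\tilde{\Phi}_k=\tilde{\Psi}_k\circ\mathcal{V}_{2k-1}$ gives $f_Z(a')=\tilde{\Psi}_{m-1}\circ\cdots\circ\tilde{\Psi}_n(f_Z(a))$, so $f_Z(a)\leq f_Z(a')$ in $T'(\beta)$. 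The only point worth being explicit about, which is not really an obstacle, is that the levels and parent relations occurring in the admissibility conditions must be interpreted inside the subtowers $T(\alpha)$ and $T'(\beta)$, since $T(\alpha)$ need not be a lower subset of the ambient $T$.
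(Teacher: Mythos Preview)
Your proof is correct. The paper itself gives no argument for this proposition (it is flagged as ``immediate to check'' and marked with $\blacksquare$), so what you have written is exactly the routine verification the author had in mind: check each of the five admissibility axioms using the surjectivity of the zig-zag maps and the two commutativity relations $\mathcal{V}_{2n}\circ\mathcal{V}_{2n-1}=\tilde{\Phi}_n$, $\mathcal{V}_{2n+1}\circ\mathcal{V}_{2n}=\tilde{\Psi}_n$. Your closing remark that levels and the $pred$ relation must be read inside the subtowers $T(\alpha)$, $T'(\beta)$ is a useful clarification that the paper leaves implicit.
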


From Proposition \ref{Admissible1} together with \ref{Asym} and \ref{ends}, and Proposition \ref{Admissible2} we conclude that

\begin{cor}\label{Admiss} Given two towers $T_1,T_2$, $[T_1]$ and $[T_2]$ are asymorphic if and only if there is an admissible map $f\co T(\alpha) \to T'(\beta)$ for some pair of sequences $\alpha,\beta$. \hfill$\blacksquare$
\end{cor}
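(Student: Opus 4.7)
The plan is to show both directions by chaining together the four results cited just before the statement: Proposition \ref{Admissible1}, Proposition \ref{Asym}, Proposition \ref{ends}, and Proposition \ref{Admissible2}. No new combinatorial ingredient is needed; the work is really to track carefully what happens to the bases of the towers $T_1, T_2$ as we pass from towers to $D_+$--chains, to subchains, to common zig-zag chains, and back to admissible morphisms defined on subtowers.

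For the forward direction, I start from the hypothesis that $[T_1]$ and $[T_2]$ are asymorphic. Let $(X_n,\Phi_n)$ and $(Y_n,\Psi_n)$ be the $D_+$--chains associated to the towers $T_1$ and $T_2$. Proposition \ref{ends} gives coarse equivalences $[T_1]\sim end(X_n,\Phi_n)$ and $[T_2]\sim end(Y_n,\Psi_n)$; since asymorphism and coarse equivalence coincide (Proposition \ref{coarse}), the end spaces $end(X_n,\Phi_n)$ and $end(Y_n,\Psi_n)$ are asymorphic. Proposition \ref{Asym} then yields $(X_n,\Phi_n)\sim_{z-z}(Y_n,\Psi_n)$, i.e.\ the existence of a common zig-zag $D_+$--chain $(Z_n,\mathcal{V}_n)$ with increasing maps $\alpha,\beta\co \bn\to\bn$. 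Finally, Proposition \ref{Admissible1} produces the desired admissible map $f_Z\co T(\alpha)\to T'(\beta)$.

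For the backward direction, assume we have an admissible map $f\co T(\alpha)\to T'(\beta)$ for some pair of sequences $\alpha,\beta$. Proposition \ref{Admissible2} tells us that the restriction $f|_{[T(\alpha)]}\co [T(\alpha)]\to [T'(\beta)]$ is an asymorphism. The subtower $T(\alpha)$ has the $\alpha$--subchain $(X_{\alpha(n)},\tilde{\Phi}_n)$ as its associated $D_+$--chain, so by Proposition \ref{ends} its base $[T(\alpha)]$ is coarse equivalent to $end(X_{\alpha(n)},\tilde{\Phi}_n)$; by Lemma \ref{telescope2} the latter is asymorphic to $end(X_n,\Phi_n)$, which in turn is coarse equivalent to $[T_1]$ by Proposition \ref{ends} again. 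The symmetric chain of equivalences holds for $T_2$. Composing all of these asymorphisms yields $[T_1]\sim [T_2]$.

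The only subtle point, which I would flag explicitly to the reader, is that the subtower $T(\alpha)$ has a strictly smaller base than $T_1$ in general (namely the $\alpha(1)$--th level of $T_1$ rather than its minimal level), so the final conclusion relies genuinely on Lemma \ref{telescope2} to identify, up to asymorphism, the end space of a $D_+$--chain with that of any of its subchains. Everything else is a direct quotation of the previous propositions, so there is no real obstacle beyond bookkeeping.
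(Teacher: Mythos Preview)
Your proof is correct and follows exactly the approach the paper intends: the corollary is stated with a $\blacksquare$ and justified only by the sentence ``From Proposition \ref{Admissible1} together with \ref{Asym} and \ref{ends}, and Proposition \ref{Admissible2} we conclude that\ldots'', and you have simply unpacked that chain of implications in both directions. Your explicit flagging of Lemma \ref{telescope2} in the backward direction is a worthwhile addition, since the paper's citation list omits it even though it is genuinely needed to pass from $[T(\alpha)]$ back to $[T_1]$.
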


What follows is a version of \ref{LargeLip} for the metric given here to the base.

\begin{prop} Given two towers $T_1,T_2$, $[T_1]$ and $[T_2]$ are asymorphic if and only if there are increasing sequences $\alpha,\beta\co \bn\to \bn$ such that $[T_1(\alpha)]$ and $[T_2(\beta)]$ are roughly isometric.
\end{prop}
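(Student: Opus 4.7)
The plan is to leverage the logarithmic relation between the tower path metric on a base $[T]$ and the end-space $D$--metric, so as to convert the large-scale bi-Lipschitz characterization of Proposition \ref{LargeLip} into a rough isometry between subtower bases. Under the canonical bijection $x\mapsto (x_n)$ from $[T]$ to $end(L_n,\Phi_n)$, two distinct base elements satisfy $D((x_n),(y_n))=2^{lev_T(\sup(x,y))}$ while $d_T(x,y)=2\,lev_T(\sup(x,y))-2$, so the two metrics are tied by $d_T=2\log_2 D-2$. In particular, multiplicative distortions on the $D$ side correspond to additive distortions on the $d_T$ side, which is exactly the form required by rough isometry.

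For the forward direction, start from an asymorphism between $[T_1]$ and $[T_2]$, use Proposition \ref{Asym} to extract a common zig-zag $D_+$--chain $(Z_n,\mathcal{V}_n)$ with increasing sequences $\alpha,\beta$, and let $f_Z\co end(X_{\alpha(n)},\tilde\Phi_n)\to end(Y_{\beta(n)},\tilde\Psi_n)$ be the induced map $(x_{\alpha(n)})\mapsto(\mathcal{V}_{2n-1}(x_{\alpha(n)}))$. As shown in the proof of Proposition \ref{LargeLip}, $f_Z$ satisfies $D_{T(\beta)}(f_Z(x),f_Z(y))\leq D_{T(\alpha)}(x,y)\leq 2\,D_{T(\beta)}(f_Z(x),f_Z(y))$ whenever $D_{T(\alpha)}(x,y)>2$. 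Transporting $f_Z$ through the canonical bijection with the base into a map $[T_1(\alpha)]\to[T_2(\beta)]$, the key structural observation is that for any two distinct elements of a base the supremum lies at level at least $2$, hence $D\geq 4>2$ and the hypothesis of \ref{LargeLip} applies to every distinct pair. Applying the change of scale $d_T=2\log_2 D-2$ turns the bi-Lipschitz inequalities into $|d_{T_2(\beta)}(f_Z(x),f_Z(y))-d_{T_1(\alpha)}(x,y)|\leq 2$. Surjectivity of $f_Z$, which follows from that of the maps $\mathcal{V}_{2n-1}$ via a standard compatible-lift argument using the commuting zig-zag diagram, gives the cobounded image condition for free, so $f_Z$ is a $2$--rough isometry.

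For the converse, observe that a rough isometry is bornologous, coarsely proper, and has cobounded image, and hence induces an asymorphism; by Proposition \ref{coarse} the subtower bases $[T_1(\alpha)]$ and $[T_2(\beta)]$ are asymorphic. Applying Proposition \ref{ends} to the subtowers $T_1(\alpha)$ and $T_2(\beta)$ identifies their bases up to asymorphism with $end(X_{\alpha(n)},\tilde\Phi_n)$ and $end(Y_{\beta(n)},\tilde\Psi_n)$, Lemma \ref{telescope2} links these to $end(X_n,\Phi_n)$ and $end(Y_n,\Psi_n)$, and one final application of Proposition \ref{ends} produces the desired asymorphism between $[T_1]$ and $[T_2]$.

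The main delicate point will be the bookkeeping in the forward direction: one has to verify that the large-scale bi-Lipschitz inequality of \ref{LargeLip} genuinely applies to every distinct pair of base points (which reduces to the structural bound $lev_T(\sup(x,y))\geq 2$) and that $f_Z$ is genuinely surjective rather than merely cobounded, so that the logarithmic change of scale produces an honest $2$--rough isometry with no residual small-scale correction.
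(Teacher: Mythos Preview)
Your argument is correct and lands on the same map and the same additive constant $2$ as the paper, but the forward direction is packaged differently. The paper invokes Corollary~\ref{Admiss} to obtain an admissible map $f\co T_1(\alpha)\to T_2(\beta)$ and then reads the inequality
\[
d_{T(\beta)}(f(x),f(y))\;\le\;d_{T(\alpha)}(x,y)\;\le\;d_{T(\beta)}(f(x),f(y))+2
\]
straight off condition~(3) of the definition of admissible morphism (two points with the same image must share a parent, so the supremum levels differ by at most one). You instead stay on the end-space side, quote the multiplicative bounds of Proposition~\ref{LargeLip} for $f_Z$, and then push them through the logarithmic change of scale $d_T=2\log_2 D-2$ to obtain the same additive bound. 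Both routes rest on the same common zig-zag $D_+$--chain and the same map $\mathcal{V}_{2n-1}$ at each level; the paper's version is a little shorter because it avoids the metric conversion, while yours makes explicit why this proposition is, as the paper says, ``a version of \ref{LargeLip} for the metric given here to the base.'' The converse directions are identical: rough isometry $\Rightarrow$ asymorphism, then Proposition~\ref{ends} and Lemma~\ref{telescope2}.

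One small caution in your write-up: the formula $d_T=2\log_2 D-2$ is only literally valid for \emph{distinct} image points, so when $\mathcal V_1(x_{\alpha(1)})=\mathcal V_1(y_{\alpha(1)})$ you should not apply the logarithm to $D_{T(\beta)}$ but rather note directly (from the zig-zag commutativity $\mathcal V_2\circ\mathcal V_1=\tilde\Phi_1$) that then $x_{\alpha(2)}=y_{\alpha(2)}$, so $d_{T_1(\alpha)}(x,y)=2$ and $d_{T_2(\beta)}(f_Z(x),f_Z(y))=0$, which still satisfies the bound. This is exactly the content of condition~(3) in the paper's route.
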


\begin{proof} If $[T_1(\alpha)]$ and $[T_2(\beta)]$ are roughly isometric, then they are, in particular asymorphic.  Proposition \ref{ends} and Lemma \ref{telescope2} yield that $[T_1]$ and $[T_2]$ are asymorphic.

From Corollary \ref{Admiss}, we obtain an admissible map $f\co T(\alpha) \to T'(\beta)$. For any pair of points $x,y\in [T_1(\alpha)]$, condition (3) in the definition of admissible map implies that $d_{T(\beta)}(f(x),f(y)) \leq  d_{T(\alpha)}(x,y) \leq d_{T(\beta)}(f(x),f(y))+2$.
\end{proof}

\end{document}